\setlist[enumerate]{nosep}%{noitemsep, topsep=0pt}
\numberwithin{equation}{section}
\newlength{\listeria}
\theoremstyle{newstyle}
\newtheorem{thm}{Theorem}%[chapter]
\theoremstyle{plain}
\newtheorem{prop}{Proposition}[section]
\newtheorem{lemma}[prop]{Lemma}
\newtheorem{cor}[prop]{Corollary}%[section]%[prop]
\newtheorem{thmA}{Theorem}
\theoremstyle{definition}
\newtheorem*{defn}{Definition}
\newtheorem{fact}{Fact}[section]
\newtheorem{example}{Example}[section]
\newtheorem*{example*}{Example}
\theoremstyle{remark}
\newtheorem*{rk}{Remark}
\newcommand{\into}{\hookrightarrow}
\newcommand{\onto}{\twoheadrightarrow}
\newcommand{\Z}{\mathbb{Z}}
\newcommand{\Q}{\mathbb{Q}}
\newcommand{\R}{\mathbb{R}}
\newcommand{\N}{\mathbb{N}}
\newcommand{\fp}{{\mathbb{F}_{p}}}
\newcommand{\F}{{\mathbb{F}}}
\newcommand{\fpbar}{\algebraiclosure{\F}_p}
\renewcommand{\tilde}{\widetilde}
\newcommand{\mm}{{\mathfrak m}}
\newcommand{\overbar}[1]{\mkern 1.5mu\overline{\mkern-3mu#1\mkern-1.5mu}\mkern 1.5mu}
\newcommand{\algebraiclosure}[1]{\overbar{{#1}}}
\newcommand{\Kbar}{\algebraiclosure{K}}
\DeclareMathOperator{\eend}{{\rm End}}
\DeclareMathOperator{\cchar}{\rm char}
\newcommand{\fromlim}{\varprojlim}
\newcommand{\jb}{Bella\"iche}
\newcommand{\gouvea}{Gouv\^ea}
\newcommand{\dd}{\Delta}
\newcommand{\lb}{\llbracket}
\newcommand{\rb}{\rrbracket}
\DeclareMathOperator{\cmod}{\,{\rm mod}}
\DeclareMathOperator{\rite}{\rho}
\DeclareMathOperator{\reed}{\pi}
\DeclareMathOperator{\cc}{\tilde {\it c}}
\begin{document}

\title{Nilpotence order growth of recursion operators in characteristic $p$}
\author{Anna Medvedovsky}
\address{Max-Planck-Institut f\"ur Mathematik, Vivatsgasse 7, 53111 Bonn Germany}
\email{medved@mpim-bonn.mpg.de}
\keywords{linear recurrences in characteristic $p$, mod-$p$ modular forms, congruences between modular forms, Hecke algebras, $p$-regular sequences}

\begin{abstract}We prove that the killing rate of certain degree-lowering ``recursion operators" on a polynomial algebra over a finite field grows slower than linearly in the degree of the polynomial attacked. We also explain the motivating application: obtaining a lower bound for the Krull dimension of a local component of a big mod-$p$ Hecke algebra in the genus-zero case. We sketch the application for $p = 2$ and $p = 3$ in level one. The case $p = 2$ was first established in by Nicolas and Serre in 2012 using different methods.
\end{abstract}

\maketitle

%%%%%%%%%%%%%%%%%%%%%%%%%%%%%%%%%%%%%%%%%%%%%%%
%%%%%%%%%%      TOC                          %%%%%%%%%%%%%%%%%%%%
%%%%%%%%%%%%%%%%%%%%%%%%%%%%%%%%%%%%%%%%%%%%%%%

\setcounter{tocdepth}{1}
\tableofcontents
%\setcounter{tocdepth}{2}

%%%%%%%%%%%%%%%%%%%%%%%%%%%%%%%%%%%%%%%%%%%%%%%
%%%%%%%%%%%%%%%%%%%%%%%%%%%%%%%%%%%%%%%%%%%%%%%
%%%%%%%%%%                          MAIN
%%%%%%%%%%%%%%%%%%%%%%%%%%%%%%%%%%%%%%%%%%%%%%%
%%%%%%%%%%%%%%%%%%%%%%%%%%%%%%%%%%%%%%%%%%%%%%%

\setlength\abovedisplayshortskip{0pt}
\setlength\belowdisplayshortskip{0pt}
\setlength\abovedisplayskip{5pt}
\setlength\belowdisplayskip{5pt}

\section{Introduction}
The main goal of this document is to prove the following Nilpotence Growth Theorem, about the killing rate of a recursion operator on a polynomial algebra over a finite field under repeated application: 

\begin{thmA}[Nilpotence Growth Theorem; see also Theorem \ref{ngtthm}]\label{mainthm}
Let $\F$ be a finite field, and suppose that $T: \F[y] \to \F[y]$ is a degree-lowering $\F$-linear operator satisfying the following condition:
\begin{enumerate}
\item[]
The sequence $\{T(y^n)\}_n$ of polynomials in $\F[y]$ satisfies a linear recursion over $\F[y]$ whose companion polynomial\footnote{See section \ref{sec:linreccompoly} for definitions.}
$X^d + a_1 X^{d-1} + \cdots + a_d \in \F[y][X]$ has both total degree $d$ and $y$-degree $d$. 
\end{enumerate}
Then there exists a constant $\alpha < 1$ so that the minimum power of $T$ that kills $y^n$ is $O(n^\alpha)$. 
\end{thmA}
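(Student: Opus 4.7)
The plan is to improve on the trivial bound $N(y^n)\le n+1$ (which follows from $\deg T^k(y^n)\le n-k$) by establishing a self-similar recursive estimate for $N(y^n)$ and then iterating. Concretely, the target is an inequality of the form $N(y^n)\le C\cdot N(y^{m})+O(n^\beta)$ with $m$ substantially smaller than $n$; iterating such an estimate roughly $\log n$ times then produces a bound $N(y^n)=O(n^\alpha)$ with $\alpha<1$ depending on $C$ and on the ratio $m/n$.

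The starting point is a structural consequence of the recursion hypothesis. Let $M\subseteq \F[y]$ denote the $\F[y]$-submodule generated by $T(1),T(y),\ldots,T(y^{d-1})$. The recursion forces every $T(y^n)$ to lie in $M$, so $T(\F[y])\subseteq M$ and $M$ is automatically $T$-stable. Iterating the recursion expresses
$$T(y^n)=\sum_{j=0}^{d-1}c_{n,j}(y)\,T(y^j)$$
with $c_{n,j}\in\F[y]$, and the total-degree hypothesis on the companion polynomial gives by a straightforward induction the sharp estimate $\deg_y c_{n,j}\le n-j$. The recursion is in this sense \emph{homogeneous} with respect to the grading in which $y$ has weight $1$ and the generator $T(y^j)$ carries a weight shift of $j$; the operator $T$ drops this weight by at least one at each step. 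The supplementary hypothesis that the $y$-degree of the companion polynomial is also $d$ is equivalent, given total degree $d$, to $\deg_y a_d = d$, which ensures the recursion is \emph{nondegenerate} at the top and should be essential for the recursive estimate below.

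The main step, and my expected main obstacle, is the recursive estimate itself. The obstruction is that $T$ is merely $\F$-linear, not $\F[y]$-linear, so applying $T$ to an element $\sum c_j(y)T(y^j)$ of $M$ does not commute with the multiplications by $y$ appearing in the coefficients, and a naive iteration loses degree control. To break this I would exploit the characteristic-$p$ structure of $\F$: writing $n=pm+r$ with $0\le r<p$, the identity $y^n=y^r\cdot(y^m)^p$ suggests seeking a bound of the shape $N(f^p)\le C\cdot N(f)$ with $C<p$, again using the recursion (and in particular the nondegenerate leading coefficient $a_d$) to absorb the cross-terms that arise from the $\F$-linearity of $T$. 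If this can be arranged, iterating roughly $\log_p n$ times produces $N(y^n)=O(n^{\log_p C})$ with $\log_p C<1$; the crux will be quantifying the $p$-th-power interaction precisely enough, through the explicit coefficients $c_{n,j}$ and the size of $a_d$, to guarantee $C<p$.
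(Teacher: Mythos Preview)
Your proposal correctly identifies the module structure and the degree bounds on the coefficients $c_{n,j}$, but the central step---an inequality of the form $N(f^p)\le C\cdot N(f)$ with $C<p$---is where the argument stalls, and I do not see a way to carry it through. The operator $T$ is only $\F$-linear, so $T(f^p)$ bears no structural relation to $T(f)$: the Frobenius on $\F[y]$ acts on the \emph{argument} of $T$, not on $T$ itself, and the recursion hypothesis says nothing about how $T$ interacts with $p$-th powers of general polynomials. Even for $f=y^m$, writing $y^{pm}=(y^m)^p$ does not reduce the problem, since the recursion for $T(y^{pm})$ in terms of $T(y^{pm-1}),\ldots,T(y^{pm-d})$ does not factor through any statement about $T(y^m)$. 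A posteriori the desired inequality does hold (once the theorem is known), but establishing it directly would seem to require the theorem already.

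The paper exploits Frobenius in a different place: not on $y^n$ but on the \emph{companion polynomial} $P$. In characteristic $p$ one has $P^{p^k}(X)=X^{dp^k}-a_1^{p^k}X^{(d-1)p^k}-\cdots-a_d^{p^k}$, so the sequence $\{T(y^n)\}$ also satisfies a recursion of order $dp^k$ whose steps are all multiples of $p^k$. Choosing $k$ so that $dp^k\le n<dp^{k+1}$, this higher recursion compares $T(y^n)$ only to $T(y^{n-ip^k})$ for small $i$; the point is that subtracting a small multiple of $p^k$ from $n$ disturbs only the top one or two base-$p^k$ digits of $n$, leaving the lower digits intact. One then constructs a \emph{content} function $c(n)$---roughly, write $n$ in base $b$ (a power of $p$) and read the resulting digit string in a smaller base $\beta<b$---which grows like $n^{\log_b\beta}$ and which, by strong induction using the spread-out recursion, is strictly decreased by every application of $T$. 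The nondegeneracy condition $\deg_y a_d=d$ enters not in this induction but in a preliminary reduction to the ``empty-middle'' case where the degree-$d$ homogeneous part of $P$ is just $X^d+cy^d$: one shows $P$ divides a polynomial of that shape by dehomogenizing and factoring over a splitting field, and this step genuinely needs $a_d$ to have top degree so that all roots of the dehomogenization are nonzero.
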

%See section \ref{companionpolysec} for definitions and a detailed statement of the theorem. 
To prove this theorem, we reduce to the case where the companion polynomial of the recursion has an ``empty middle" in its degree-$d$ homogeneous part: that is, when it has the form $X^d + a y^d + (\mbox{lower-order terms})$ for some $a \in \F$. Then we prove this empty-middle case (see Theorem \ref{specialngtthm} below) by constructing a function $c: \F[y] \to \N \cup \{-\infty\}$ that grows like $(\deg f)^\alpha$ and whose value is lowered by every application of $T$. In the special case where $d$ is a power of $p$, the function $c$ takes $y^n$ to the integer obtained by writing $n$ base $d$ and then reading the expansion in some smallers base, so that the sequence $\{c(y^n)\}_n$ is $p$-regular in the sense of Allouche and Shallit \cite{alloucheshallit}. The proof that $c(T(y^n)) < c(y^n)$, by strong induction, uses higher-order recurrences depending on $n$, so that $n$ is compared to numbers whose base-$d$ expansion is not too different. 

It is the author's hope that ideas from $p$-automata theory can eventually be used to sharpen and generalize the Nilpotence Growth Theorem.

{\bf Motivating application of the Nilpotence Growth Theorem:} The motivating application for the Nilpotence Growth Theorem (Theorem \ref{mainthm} above) is the so-called \emph{nilpotence method} for establishing lower bounds on dimensions of local components of Hecke algebras acting on mod\nobreakdash-$p$ modular forms of tame level~$N$. These Hecke algebra components were first studied by Jochnowitz  in the 1970s \cite{JochStudy}, but the first full structure theorem, for $p = 2$ and $N =1$, due to Nicolas and Serre, appeared only in 2012 \cite{NS2}. The Nicolas-Serre method uses the recurrence satisfied by Hecke operators (see equation \eqref{eq:rec2} below) to describe the action of Hecke operators on modular forms modulo $2$ completely explicitly---but unfortunately these explicit formulas do not appear to generalize beyond $p = 2$. The case $p \geq 5$ was then established by very different techniques by \jb\ and Khare for $N = 1$ \cite{BK}, and later by Deo for general level~\cite{deo}. The \jb-Khare method deduces information about  mod-$p$ Hecke algebra components from corresponding characteristic-zero Hecke algebra components, which are known to be big by the \gouvea-Mazur ``infinite fern" construction (\cite{infinitefern}; see also \cite[Corollary 2.28]{emertonBBK}). The nilpotence method is yet a third technique, coming out of an idea of \jb\ for tackling the case $p = 3$ and $N = 1$ as outlined in \cite[appendix]{BK}, and implemented and developed in level one for $p= 2, 3, 5, 7, 13$ in the present author's  Ph.D. dissertation~\cite{medved}. Like Nicolas-Serre, the nilpotence method stays entirely in characteristic $p$ and makes use of the Hecke recursion; but instead of explicit Hecke action formulas, the {Nilpotence Growth Theorem} (Theorem \ref{mainthm} above) now plays the crucial dimension-bounding role. See section \ref{appstoheckesec} below for a taste of this method for $p = 2, 3$, which completes the determination of the structure of the Hecke algebra for $p = 3$ begun in \cite[appendix]{BK} and recovers the Nicolas-Serre result for $p = 2$.
The nilpotence method using the {Nilpotence Growth Theorem} can be generalized for all $(p, N)$ if the genus of $X_0(Np)$ is zero; see the forthcoming \cite{medved:heckedim} for details.

{\bf Structure of this document:} After a few preliminary definitions in section \ref{companionpolysec}, we state the Nilpotence Growth Theorem (restated as Theorem \ref{ngtthm}) in section \ref{ngtstatesec} and discuss the various conditions of the theorem. In section \ref{toyngtsec}, we prove a toy version of the theorem (Theorem \ref{toyngtthm}). In section \ref{appstoheckesec}, we use the toy version of NGT (Theorem~\ref{toyngtthm}) from the previous sections to prove that the mod-$p$ level-one Hecke algebra for $p = 2, 3$ has the form $\fp\lb x, y \rb$. This section illustrates the motivating application of the Nilpotence Growth Theorem, and is not required for the rest of the document. This would be a reasonable stopping point for a first reading. 

At this point, in section \ref{proofbeginsec} the proof begins in earnest. There is a short overview of the structure of the proof in subsection~\ref{overproofsec}. In subsection~\ref{reducetoFsec}, we reduce to working over a finite field. In subsection~\ref{specialngtsec}, we reduce to the so-called empty-middle NGT (Theorem \ref{specialngtthm}). In subsection~\ref{ngtpf}, we give the inductive argument that reduces the proof of the empty-middle NGT to finding a \emph{nilgrowth witness} function that satisfies certain properties. In section \ref{contsec}, we discuss base-$b$ representation of numbers and introduce the content function. In section \ref{sec:ineqs}, we prove a number of technical inequalities about the content function. In section \ref{nilgrowthsec} we define the nilgrowth witness, finishing  the proof of the empty-middle NGT, and hence of the NGT in full. Finally in section~\ref{complementsec}, we state a more precise version of the toy NGT and speculate on the optimality of some bounds. 

{\bf Acknowledgements:} I am greatly indebted to my Ph.D. advisor Jo\"el \jb\ for sparking and supporting this investigation into modular forms modulo $p$. The initial idea for the nilpotence method is his---a most generous gift. I would also like to thank Paul Monsky for many illuminating discussions on the topic of mod-$p$ modular forms and Hecke algebras, and Kiran Kedlaya for helpful comments on an earlier version of Theorem \ref{ngtthm}.  Part of the writing of this document was carried out at the Max Planck Institute for Mathematics in Bonn, and I am grateful for their hospitality. Many extensive computations related to this project were performed using {\tt SAGE}~\cite{sage}.

\section{Preliminaries}\label{companionpolysec}
%%%%%%%%%%%%%%%%%%%%%%%%%%%%%%%%%%%%%%%%%%%%%%%
%%%%%%%%%%%%%%%%%%%%%%%%%%%%%%%%%%%%%%%%%%%%%%%
%%%%%%%%%%%%%%%%%%%%%%%%%%%%%%%%%%%%%%%%%%%%%%%
%%%%%%%%%%%%%%%%%%%%%%%%%%%%%%%%%%%%%%%%%%%%%%%
%%%%%%%%%%%%%%%%%%%%%%%%%%%%%%%%%%%%%%%%%%%%%%%
%%%%%%%%%%%%%%%%%%%%%%%%%%%%%%%%%%%%%%%%%%%%%%%

This section contains a brief review of a few unconnected algebraic notions.  All rings and algebras are assumed to be commutative, with unity. We use the convention that the set of natural numbers starts with zero: $\N = \{0, 1, 2, \ldots\}$. Below, $R$ is always a ring. 

%%%%%%%%%%%%%%%%%%%%%%%%%%%%%%%%%%%%%%%%%%%%%%%
\subsection{Structure of finite rings}
%%%%%%%%%%%%%%%%%%%%%%%%%%%%%%%%%%%%%%%%%%%%%%%
If $R$ finite, then $R$ is artinian, hence a finite product of finite local rings. If $R$ is a finite local ring with maximal ideal $\mm$, then the residue field $R/\mm$ is a finite field of characteristic $p$. Moreover, the graded pieces $\mm^{n}/\mm^{n + 1}$ are finite $R/\mm$-vector spaces, so that that $R$ has cardinality a power of $p$. Basic examples of finite local rings are $\fp[t]/(t^k)$ and $\Z/p^k\Z$.

\subsection{Degree filtration on a polynomial algebra}
If $0 \neq f= \sum_{n \geq 0} c_n y^n$ is a polynomial in $R[y]$ (so only finitely many of the $c_n$ are nonzero), then its \emph{$y$-degree}, or just \emph{degree}, is as usual defined to be $\deg f := \max \{n: c_n \neq 0\}$. Also let $\deg 0 : = -\infty$. 

The degree function gives $R[y]$ the structure of a \emph{filtered algebra}: Let $R[y]_n:= \{f \in R[y]: \deg f \leq n\}$, and then $R[y] = \bigcup_{n \geq 0} R[y]_n$ and multiplication preserves the filtration as required.

%%%%%%%%%%%%%%%%%%%%%%%%%%%%%%%%%%%%%%%%%%%%%%%
\subsection{Local nilpotence and the nilpotence index}
%%%%%%%%%%%%%%%%%%%%%%%%%%%%%%%%%%%%%%%%%%%%%%%

Let $M$ be any $R$-module (for example, $M = R[y]$) and $T \in \eend_R(M)$ an $R$-linear endomorphism. (In applications to Hecke algebras, $R$ will be a finite field, $M$ an infinite-dimensional space of modular forms, and $T$ a Hecke operator.) The operator $T: M \to M$ is \emph{locally nilpotent} on $M$ if every element of $M$ is annihilated by some power of $T$. 
If $T$ is locally nilpotent, and $f$ in $M$ is nonzero, we define the \emph{nilpotence index of $f$ with respect to $T$}: 
$$N_T(f) := \max \{k \geq 0: T^k f \neq 0\}.$$
Also set $N_T(0) : = -\infty$. 

Suppose $R = K$ is a field and $M = K[y]$ and $T: M \to M$ preserves the degree filtration: that is, $T(K[y]_n) \subset K[y]_n$. Then $T$ is locally nilpotent if and only if $T$ strictly lowers degrees, in which case we also have $N_T(f) \leq \deg f$. 

For example, $T = \frac{d}{dy}$ is locally nilpotent on $K[y]$. If $K$ has characteristic zero, then $N_T(f) = \deg f$; otherwise $N_T(f) \leq \cchar K-1$. 

%%%%%%%%%%%%%%%%%%%%%%%%%%%%%%%%%%%%%%%%%%%%%%%
\subsection{Linear recurrences and companion polynomials}\label{sec:linreccompoly}
%%%%%%%%%%%%%%%%%%%%%%%%%%%%%%%%%%%%%%%%%%%%%%%
Now suppose that $M$ is an $R$-algebra, and $M'$ is an $M$-module (we will normally take $M = M'$). A sequence $s = \{s_n\}_n \in M'^\N$ satisfies a (monic) \emph{$M$-linear recurrence of order $d$} if there exist elements $a_1, \ldots, a_d \in M$ so that  
\begin{equation}\label{eq:linrec}
s_n = a_1 s_{n-1} + \cdots + a_d s_{n - d}
\qquad \mbox{for all $n \geq d$}.
\end{equation} 
We do not a priori assume that $a_d \neq 0$, but we do insist that the recursion already hold for $n = d$. 
The \emph{companion polynomial} of this linear recurrence is $P(X) = X^d - a_1 X^{d - 1} - \cdots - a_d \in M[X].$ 

\begin{example}\label{trivialcounterex}
The sequence $s = \{0, 1, y, y^2, y^3, y^4,\ldots\} \in R[y]^\N$ satisfies an $R[y]$-linear recursion of minimal order~2: we have $s_n = y s_{n - 1}$ for all $n \geq 2$, but not for $n = 1$. The companion polynomial of the recurrence is therefore $X^2 - yX$. 
\end{example}

Given any sequence $s$ in $M'^\N$, the set of companion polynomials of $M$-linear recurrences satisfied by $s$ forms an ideal of $M[X]$. We record this observation in the following form:  

\begin{fact}\label{fact:compolyideal}
If a sequence $s \in M'^\N$ satisfies the recurrence defined by some monic $P \in M[X]$, then it also satisfies the recurrence defined by $PQ$ for any other monic $Q \in M[X]$. 
\end{fact}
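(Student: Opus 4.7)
The plan is to recast the recurrence condition operationally via a shift operator, after which the fact becomes a one-line calculation. Define $\sigma: (M')^\N \to (M')^\N$ by $(\sigma s)_n := s_{n+1}$. This map is $M$-linear, and for each $a \in M$ the scalar $a$ acts on $(M')^\N$ coordinate-wise, commuting with $\sigma$. Consequently, for any polynomial $F(X) \in M[X]$ the operator $F(\sigma) \in \eend_M\bigl((M')^\N\bigr)$ is well-defined, and the assignment $F \mapsto F(\sigma)$ is a ring homomorphism from $M[X]$ into $\eend_M\bigl((M')^\N\bigr)$.

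Next I would rewrite the recurrence condition as the vanishing of a sequence. Substituting $m = n - d$, the requirement $s_n = a_1 s_{n-1} + \cdots + a_d s_{n-d}$ for all $n \geq d$ is equivalent to saying that the sequence $P(\sigma)s$, whose $m$th term is
\[
s_{m+d} - a_1 s_{m+d-1} - \cdots - a_d s_m,
\]
is identically zero for $m \geq 0$. That is, $s$ satisfies the recurrence attached to $P$ precisely when $P(\sigma) s = 0$ in $(M')^\N$.

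With that reformulation, the fact is immediate: since $M[X]$ is commutative and the homomorphism $F \mapsto F(\sigma)$ is multiplicative, $(PQ)(\sigma) = Q(\sigma) \circ P(\sigma)$, and hence
\[
(PQ)(\sigma)\, s \;=\; Q(\sigma)\bigl(P(\sigma) s\bigr) \;=\; Q(\sigma)(0) \;=\; 0,
\]
which is exactly the statement that $s$ satisfies the recurrence attached to $PQ$.

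There is no real obstacle here; the only care required is bookkeeping in the indexing, verifying that $P(\sigma) s = 0$ captures the convention that the recurrence hold already at $n = d$ (rather than only eventually), so that the passage to $PQ$ yields a recurrence valid starting at $n = d + \deg Q$, as the definition demands.
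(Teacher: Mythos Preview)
Your proof is correct. The paper does not give a detailed proof of this fact; it merely records the observation that the companion polynomials of recurrences satisfied by $s$ form an ideal of $M[X]$, and your shift-operator argument is exactly the standard way to make that observation precise.
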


In characteristic $p$, then, we get the following corollary, of which we will make crucial use: 

\begin{cor}\label{cor:ppowercharp}
If $R$ has characteristic $p$, and $s \in M^\N$ satisfies the order-$d$ recurrence
$$s_n = a_1 s_{n-1} + a_2 s_{n -2} + \cdots + a_d s_{n-d} \quad \mbox{for all $n \geq d$},$$
then for every $k \geq 0$ the sequence $s$ also satisfies the order-$dp^k$ recurrence
\begin{equation}\label{eq:powerp}
s_n = a_1^{p^k} s_{n - p^k} + a_2^{p^k} s_{n - 2 p^k} + \cdots +  a_d^{p^k} s_{n - d p^k} \quad 
\mbox{for all $n \geq dp^k$}.
\end{equation}
\end{cor}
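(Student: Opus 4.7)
The plan is to read the claim through the lens of the companion-polynomial ideal of Fact \ref{fact:compolyideal}. The original recurrence has companion polynomial
\[P(X) = X^d - a_1 X^{d-1} - \cdots - a_d \in M[X],\]
and the target recurrence \eqref{eq:powerp} has companion polynomial
\[Q(X) = X^{dp^k} - a_1^{p^k} X^{(d-1)p^k} - \cdots - a_d^{p^k}.\]
So it suffices to show two things: (a) $Q$ lies in the ideal of companion polynomials of recurrences satisfied by $s$, and (b) when we unpack the definition, the ``recurrence defined by $Q$'' is literally the identity \eqref{eq:powerp} holding for all $n \ge dp^k$.

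For (a), the key observation is that in characteristic $p$ the Frobenius map is a ring homomorphism, so by induction on $k$ the freshman's dream gives $(u+v)^{p^k} = u^{p^k} + v^{p^k}$ in any $R$-algebra. Applying this to the $d+1$ terms of $P(X)$ (and noting $(-a_i)^{p^k} = -a_i^{p^k}$, which holds in odd characteristic because $p^k$ is odd and in characteristic $2$ because $-1 = 1$), I get $P(X)^{p^k} = Q(X)$. Since $s$ satisfies the recurrence with companion polynomial $P$, Fact \ref{fact:compolyideal} applied with the monic multiplier $P^{p^k - 1}$ yields that $s$ satisfies the recurrence with companion polynomial $P \cdot P^{p^k - 1} = P^{p^k} = Q$.

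For (b), I would just note that $Q$ is monic of degree $dp^k$ in $X$ with coefficient of $X^{(d-i)p^k}$ equal to $-a_i^{p^k}$ and all other non-leading coefficients zero; by the definition in \eqref{eq:linrec}, the recurrence with companion polynomial $Q$ is exactly \eqref{eq:powerp}, required to hold for all $n \ge dp^k$. There is no real obstacle here: the whole content of the corollary is the Frobenius identity $P^{p^k} = Q$, which is why the hypothesis ``$R$ has characteristic $p$'' is singled out; the rest is bookkeeping about companion polynomials, already packaged in Fact \ref{fact:compolyideal}.
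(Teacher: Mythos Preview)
Your proposal is correct and follows essentially the same route as the paper: observe that in characteristic $p$ one has $P^{p^k} = X^{dp^k} - a_1^{p^k} X^{(d-1)p^k} - \cdots - a_d^{p^k}$, and invoke Fact~\ref{fact:compolyideal} to conclude that $s$ satisfies the corresponding recurrence. The paper states this in two lines without spelling out the Frobenius/sign details you included, but the argument is the same.
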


\begin{proof}
Let $P = X^d - a_1 X^{d-1} - \cdots - a_d$ be the companion polynomial of a recursion satisfied by $s$. By Fact~\ref{fact:compolyideal}, the sequence $s$ also satisfies the recurrence whose companion polynomial is 
$$P^{p^k} = X^{d p^k} - a_1^{p^k} X^{dp^k - p^k} - a_2^{p^k} X^{d p^k - 2p^k} - \cdots - a_d^{p^k},$$
which is exactly what is expressed in equation \eqref{eq:powerp}. 
\end{proof}

If $M$ can be embedded into a field $K$, we have the following well-known characterization of power sequences in $\Kbar^\N$ satisfying a fixed $M$-linear recurrence: 
\begin{fact}
An element $\alpha$ in $\Kbar$ is a root of monic $P \in M[X]$ if and only if the sequence $\{\alpha^n\}_{n} = \{1, \alpha, \alpha^2, \ldots\}$ satisfies the linear recurrence with companion polynomial $P$.
\end{fact}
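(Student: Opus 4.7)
The plan is to unwind the definitions on both sides and observe that the recurrence relation evaluated at a single index is already equivalent to the vanishing of $P(\alpha)$, with the remaining indices following by multiplication.

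Write $P(X) = X^d - a_1 X^{d-1} - \cdots - a_d$. By definition, the sequence $\{\alpha^n\}_{n \geq 0}$ satisfies the recurrence with companion polynomial $P$ precisely when
\begin{equation*}
\alpha^n = a_1 \alpha^{n-1} + a_2 \alpha^{n-2} + \cdots + a_d \alpha^{n-d} \quad \text{for every } n \geq d.
\end{equation*}
For the easy direction, I would specialize this identity to $n = d$: it reads $\alpha^d - a_1 \alpha^{d-1} - \cdots - a_d = 0$, which is literally the statement $P(\alpha) = 0$. So if $\{\alpha^n\}$ satisfies the recurrence, then $\alpha$ is a root of $P$.

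For the converse, assume $P(\alpha) = 0$, i.e.\ $\alpha^d = a_1 \alpha^{d-1} + \cdots + a_d$ in $\Kbar$. For any $n \geq d$, multiply both sides by $\alpha^{n-d} \in \Kbar$ to obtain
\begin{equation*}
\alpha^n = a_1 \alpha^{n-1} + a_2 \alpha^{n-2} + \cdots + a_d \alpha^{n-d},
\end{equation*}
which is exactly the recurrence at index $n$. Since this works for every $n \geq d$, the whole sequence $\{\alpha^n\}$ satisfies the recurrence with companion polynomial $P$.

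There is essentially no obstacle here: the proof is a one-line computation in each direction, using only that $M \subset K \subset \Kbar$ so that all the manipulations take place inside a field (so multiplication by $\alpha^{n-d}$ makes sense and there are no subtleties). The only mild conceptual point is that the recurrence condition is formally an infinite family of equations indexed by $n \geq d$, whereas $P(\alpha) = 0$ is a single equation; the content of the fact is that the $n = d$ case already implies all the others because we are looking at a geometric sequence.
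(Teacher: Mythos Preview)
Your proof is correct and complete. The paper itself does not prove this fact at all---it is stated as ``well-known'' and left without proof---so there is nothing to compare against; your argument is the standard one and would be exactly what any reader is expected to supply.
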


If the companion polynomial of such an $M$-linear recurrence has no repeated roots in $\Kbar$, it follows from the proposition that \emph{every} solution to the recurrence is a linear combination of such power sequences on the roots of the companion polynomial. One can further describe all $\Kbar$-sequences satisfying a general $M$-linear recursion --- see, for example, \cite{kconrad:recursion} and the historical references therein --- but we will not need this below.

%%%%%%%%%%%%%%%%%%%%%%%%%%%%%%%%%%%%%%%%%%%%%%%
%%%%%%%%%%%%%%%%%%%%%%%%%%%%%%%%%%%%%%%%%%%%%%%
\section{The Nilpotence Growth Theorem (NGT)}\label{ngtstatesec}
%%%%%%%%%%%%%%%%%%%%%%%%%%%%%%%%%%%%%%%%%%%%%%%
%%%%%%%%%%%%%%%%%%%%%%%%%%%%%%%%%%%%%%%%%%%%%%%

\subsection{Statement of the NGT}\label{ngtstatesubsec}
%%%%%%%%%%%%%%%%%%%%%%%%%%%%%%%%%%%%%%%%%%%%%%%
%%%%%%%%%%%%%%%%%%%%%%%%%%%%%%%%%%%%%%%%%%%%%%%

We are now ready to state the most general version of the Nilpotence Growth Theorem (NGT). From now on, we will assume that $R$ will be a \emph{finite} ring, and $M = R[y]$. The eventual Hecke examples will come from the case where $R$ is a finite field.

\begin{thm}[Nilpotence Growth Theorem]\label{ngtthm}
Let $R$ be a finite ring, and suppose that $T: R[y] \to R[y]$ is an $R$-linear operator satisfying the following two conditions:
\begin{enumerate}
\item\label{Tdeglow} $T$ lowers degrees: $\deg T(f) < \deg f$ for every nonzero $f$ in $R[y]$.
\item\label{Trecop} The sequence $\{T(y^n)\}_n$ satisfies a \emph{filtered} linear recursion over $R[y]$: that is, there exist $a_1, \ldots, a_d \in R[y]$, with $\deg a_i \leq i$ for each $i$, so that for all $n \geq d$, $$T(y^n) = a_1 T(y^{n-1}) +  \cdots + a_d T(y^{n-d}).$$
\end{enumerate}

Suppose further that  
\begin{enumerate}[resume]
\item \label{T3} the coefficient of $y^d$ in $a_d$ is invertible in $R$.
\end{enumerate} 

Then there exists a constant $\alpha < 1$ so that $N_T(y^n) \ll n^\alpha$. 
\end{thm}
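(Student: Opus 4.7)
The plan is to follow a four-step reduction aligned with the roadmap sketched in the introduction. First I would reduce to the case $R = \F$ a finite field. Since a finite commutative ring is a finite product of finite local rings, each with nilpotent maximal ideal $\mm$ and residue field a finite field of characteristic $p$, I can work one local component at a time and filter $R[y]$ by the powers of $\mm$. The operator $T$ descends to each graded piece $(\mm^i/\mm^{i+1})[y]$, a polynomial algebra over the finite field $R/\mm$; since the filtration has finite length, a uniform $O(n^\alpha)$ nilpotence bound on each piece combines into an $O(n^\alpha)$ bound on $R[y]$ with the same exponent.

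Second I would reduce to the \emph{empty-middle} case, where the top total-degree-$d$ homogeneous part of $P(X) = X^d - a_1 X^{d-1} - \cdots - a_d$ has the form $X^d - \lambda y^d$ for some $\lambda \in \F^\times$ (equivalently, $\deg a_i < i$ for $1 \le i < d$ and $a_d$ has top $y$-coefficient $\lambda$). To achieve this, I would look at the factorization $P^{\textup{top}}(X,y) = \prod_i (X - \beta_i y)$ over an algebraic closure $\fbar$, enlarge $\F$ to contain all $\beta_i$, and multiply $P$ by a monic $Q \in \F[y][X]$ whose top-degree roots fill out $\{\beta_i\}$ into a union of $D$-th-root-of-unity cosets. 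By Fact \ref{fact:compolyideal}, the product $PQ$ is still a valid companion polynomial for $\{T(y^n)\}_n$, and its top-degree part factors as a product of empty-middle factors $X^D - \gamma y^D$. A further application of Corollary \ref{cor:ppowercharp} (raising to a $p^k$th power) lets me assume without loss of generality that the resulting order is itself a power of $p$, which makes the base arithmetic in the next step clean.

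Third, for the empty-middle case with $d = p^k$, I would construct a \emph{nilgrowth witness}: a function $c \from \F[y] \to \N \cup \{-\infty\}$ with (a) $c(y^n) \le C \, n^\alpha$ for some $C > 0$ and some $\alpha < 1$, and (b) $c(Tf) < c(f)$ for every nonzero $f$ with $\deg f$ above a fixed threshold. These together force $N_T(y^n) \le c(y^n) + O(1) = O(n^\alpha)$. Concretely, I would fix an integer $b$ with $1 < b < d$, write each $n \in \N$ in base $d$ as $n = \sum_i n_i d^i$ with $0 \le n_i < d$, set $c(y^n) := \sum_i n_i b^i$ (``read the base-$d$ digits of $n$ in base $b$''), and extend to polynomials by $c(f) := \max\{c(y^n) : [y^n]f \neq 0\}$. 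Then $\alpha := \log_d b < 1$ and (a) is immediate.

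The main obstacle is property (b). I would prove $c(Ty^n) < c(y^n)$ by strong induction on $n$: for $n$ beyond a small threshold, I would choose $p^j$ matched to the lowest nonzero base-$d$ digit position of $n$ and apply Corollary \ref{cor:ppowercharp} to obtain
\[
T(y^n) \;=\; \sum_{i=1}^{d} a_i^{p^j}\, T(y^{n - i p^j}).
\]
The inductive hypothesis gives $c(T(y^{n-ip^j})) < c(y^{n-ip^j})$, so it remains to show $c\bigl(a_i^{p^j} T(y^{n-ip^j})\bigr) < c(y^n)$ for each $1 \le i \le d$. For $i < d$ this uses the strict inequality $\deg a_i < i$ (scaled by $p^j$) combined with a careful comparison of the base-$b$ readings of $n$ and $n - ip^j$; for $i = d$, the empty-middle shape forces the base-$d$ subtraction to propagate a carry that drops the reading by at least $1$. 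The difficulty is entirely this interplay between subtraction-with-carry in base $d$, multiplication by polynomials of controlled $y$-degree, and the $b$-reading, and in choosing $b$ large enough to absorb the lower-order-in-$y$ contributions of the $a_i$; this is the delicate bookkeeping worked through in sections \ref{contsec} and \ref{sec:ineqs}.
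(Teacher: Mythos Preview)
Your overall four-step structure matches the paper's, and Step~1 is essentially identical to Proposition~\ref{reducetoFprop}. However, there are two genuine gaps.

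\textbf{Step 2 (reduction to empty middle).} A product of ``empty-middle'' factors $\prod_j (X^D - \gamma_j y^D)$ is \emph{not} itself empty-middle when the $\gamma_j$ are distinct: already $(X^2 - a y^2)(X^2 - b y^2) = X^4 -(a+b)X^2 y^2 + ab y^4$ has a middle term. The paper's reduction (Proposition~\ref{specialngtimpliesngtprop}) instead dehomogenizes the top-degree part $H$ to $h(X) = H(1,X)$, passes to its splitting field $\F'$ of cardinality~$q$, and uses that every nonzero element satisfies $\alpha^{q-1}=1$: then $h(X)$ divides $(X^{q-1}-1)^{q^m} = X^e - 1$ with $e = q^m(q-1)$, and rehomogenizing gives top part exactly $X^e - y^e$. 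Note that $e$ is essentially never a power of~$p$, so your plan to ``raise to a $p^k$-th power to make the order a power of~$p$'' cannot work ($ep^k$ is a $p$-power only if $e$ already is). The paper therefore proves the empty-middle theorem for \emph{arbitrary}~$d$ (Theorem~\ref{specialngtthm}), using the finer witness $c^d_{b,\beta}(n) := c_{b,\beta}(n/d)$ that reads the base-$b$ expansion of the \emph{fraction} $n/d$ in base $\beta = b - D$; your Step~3 describes only the toy case $d = b$.

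\textbf{Step 4 (choice of recursion level).} Matching $p^j$ to the \emph{lowest} nonzero base-$d$ digit of~$n$ does not work. The paper instead takes $k$ so that $db^k \le n < db^{k+1}$ --- matched to the \emph{highest} digit --- which is essential: subtracting $ib^k$ from $n$ then affects only the top two base-$b$ digits, and the step inequality $c(n)-c(n-ib^k) \ge c(m)-c(m-jb^k)$ reduces (after dividing by $\beta^k$) to the two-digit statement of Proposition~\ref{babycontent}\eqref{iDitem}--\eqref{iditem}. With your choice, borrows can cascade upward arbitrarily far and the inequality fails. Concretely, in the toy case $d = q \ge 3$, take $n = 1 + q^2$ (so your $\ell = 0$) and $i = 2 < d$: then $c_{q,q-1}(n) = 1 + (q-1)^2$ but $c_{q,q-1}(n-2) = (q-1)q$, so $c(n-2) - c(n) = q - 2 > 0$. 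The left side of the needed step inequality is negative, and the induction does not close.
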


In other words, Theorem \ref{ngtthm} implies that, under a mild technical assumption (condition (3)), the nilpotence index of a degree-lowering operator defined by a filtered linear recursion grows \emph{slower than linearly} in the degree. The mild technical assumption is necessary in the theorem as stated: see the discussion in \eqref{T3nec} in subsection \ref{ngtremarksec} below.

%%%%%%%%%%%%%%%%%%%%%%%%%%%%%%%%%%%%%%%%%%%%%%%
%%%%%%%%%%%%%%%%%%%%%%%%%%%%%%%%%%%%%%%%%%%%%%%
\subsection{Discussion of the NGT}\label{ngtremarksec}
%%%%%%%%%%%%%%%%%%%%%%%%%%%%%%%%%%%%%%%%%%%%%%%
%%%%%%%%%%%%%%%%%%%%%%%%%%%%%%%%%%%%%%%%%%%%%%%
\begin{enumerate}[leftmargin=*,align=left]

\item{\bf Connection with Theorem \ref{mainthm}:} If $T: R[y] \to R[y]$ satisfies the conditions of Theorem~\ref{ngtthm}, then the companion polynomial of the recursion satisfied by the sequence $\{T(y^n)\}_n$ is $$P_T = X^d - a_1 X^{d-1} - \ldots - a_d \in R[y, X].$$ The condition $\deg a_i \leq i$ from (\ref{Trecop}) guarantees that the total degree of $P_T$ is exactly $d$. In particular, in the case where $R= \F$ is a finite field, condition~(\ref{T3}) implies that $\deg_y P_T = \deg a_d = d$. In other words, Theorem~\ref{ngtthm} over a finite field reduces to Theorem~\ref{mainthm}, as implied.  

\item{\bf Condition (\ref{Tdeglow}) guarantees that $T$ is locally nilpotent.} 
Moreover, $N_T(y^n) \leq n$, so that the function $n \mapsto N_T(y^n)$ a priori grows no faster than linearly.

\item {\bf Condition (\ref{Trecop}) and connection to recursion operators:} The condition that the sequence $\{T(y^n)\}_n$ satisfies a linear recurrence is the definition of a \emph{recursion operator}, a notion that will be explored in a future paper. A natural source of \emph{filtered} recursion operators (that is, satisfying additional degree bounds as in condition (\ref{Trecop}) above) comes from the action of Hecke operators on algebras of modular forms of a fixed level. Namely, \emph{if $f$ is a modular form of weight $k$ and level $N$ and $T$ is a Hecke operator acting on the algebra $M$ of forms of level $N$, then the sequence $\{T(f^n)\}_n$ satisfies an $M$-linear recursion with companion polynomial $X^d + a_1 X^{d-1} + \cdots + a_d$, where $a_i$ comes from weight $ki$.} 

\noindent See equations \eqref{eq:rec2} and \eqref{eq:rec3} below for examples over $\fp$, \cite[chapter 6]{medved} for a proof in level one when $T$ is a prime Hecke operator, or the forthcoming \cite{medved:heckedim} for more details. This kind of recurrence in characteristic $2$ was crucially used in Nicolas-Serre \cite[section 3]{NS1} to obtain the structure of the mod-$2$ Hecke algebra. Earlier appearances of the Hecke recurrence include~\cite{kamal} (two-dimensional recurrence for $\{T_\ell(E_4^n E_6^m)\}_{n, m}$ in characteristic zero) and \cite[p.\ 594]{BuzzCale:slopes} (recurrence for $U_2$ acting on a power basis of overconvergent $2$-adic modular functions). 

\item\label{T3nec} {\bf Condition (\ref{T3}) is necessary as stated:} Consider the operator $T: R[y] \to R[y]$ defined by $T(y^n) = s_n$, where $s_n$ is the sequence $\{0, 1, y, y^2, \ldots\}$ with companion polynomial $X^2 - yX$ from Example \ref{trivialcounterex}. All conditions except~ (\ref{T3}) are satisfied, and it is easy to see that $N_T(y^n) = n$ in this case.
 
For an example with $a_d \neq 0$, consider the operator $T$ with defining companion polynomial $P_T = X^2 + y X + y$ with initial values $[T(1), T(y)] = [0, 1]$. By induction, $\deg T(y^n) = n - 1$. Therefore $N_T(y^n) = n$. 

Computationally, it appears that if $R = \fp$ and $\deg a_d < d$ but there exists an $i$ with $0 < i < d$ so that $\deg a_i = i$, then either $N_T$ grows logarithmically or else it grows linearly. In that sense, it appears that ``fullness" of degree at the end of $P_T$ (that is, the presence of a $y^d$ term) appears to be, at least generically, necessary to compensate for ``fullness" of degree in the middle (that is, the presence of a $y^i X^{d-i}$ term for some $0< i < d$), if one wants the growth of $N_T$ to be sublinear but not degenerate. Further explanation is necessary to understand this behavior well. 

\item {\bf The constant $\alpha$:} The power $\alpha$ depends on $R$ and $d$ only, and tends to $1$ as $d \to \infty$. More precisely, the dependence on $R$ is only through its maximal residue characteristic; the length of $R$ as a module over itself affects only the implicit constant of the growth condition $N_T(y^n) \ll n^\alpha$. In the special case empty-middle case where the inequality $\deg a_i < i$ is strict for every $i < d$, we can take $\alpha$ to be $\log_{p^k}(p^k - 1)$ for $k$ satisfying $d \leq p^k$. See Theorem~\ref{toyngtthm} or Theorem \ref{specialngtthm} below.

\item {\bf Finite characteristic is necessary:} A counterexample in characteristic zero: Consider the operator $T$ on $\Q[y]$ with  $P_T = X^2 - y X - y^2$ and degree-lowering initial terms $[T(1), T(y)] = [0, 1]$. This satisfies all three conditions of the NGT. It is easy to see that $T(y^n) = F_n y^{n-1}$, where $F_n$ is the $n^{\rm th}$ Fibonacci number: the recursion is $s_n = y s_{n-1} + y^2 s_{n-2}$. Therefore $$T^k(y^n) = F_n F_{n-1} \cdots F_{n-k + 1} \, y^{n-k},$$ so that $N_T(y^n)= n$. (Compare to characteristic $p$, where the operator defined by $T(y^n) = F_n y^{n-1}$ on $\fp[y]$ satisfies $T^{p + 1} \equiv 0$.) See also Proposition \ref{prop:charzeroex} for a family of examples in any degree. 

Computationally, it appears that generic characteristic-zero examples that do not degenerate (to $\log n$ growth) all exhibit linear growth. Over a finite field, computationally one sees a spectrum of $O(n^\alpha)$ growth for various $\alpha < 1$. 

\item {\bf Finiteness of $R$ is necessary as stated:} A counterexample over $\fp(t)$: Let $P_T = X^2 - t y X - y^2$ and start with $[0, 1]$ again. Then $T(y^n) = F_n(t) y^{n-1}$ with $F_n(t) \in \fp[t]$ monic of degree $n-1$, so that $N_T(y^n) = n$ again.
However, see the empty-middle case (Theorem \ref{specialngtthm}) for a special case that does hold for infinite rings of characteristic $p$.
\end{enumerate}

%%%%%%%%%%%%%%%%%%%%%%%%%%%%%%%%%%%%%%%%%%%%%%%
\section{A toy case of the NGT}\label{toyngtsec}

%%%%%%%%%%%%%%%%%%%%%%%%%%%%%%%%%%%%%%%%%%%%%%%

Fix a prime $p$ and take $R = \fp$\footnote{In fact $R$ may be any ring of characteristic $p$. We work with $\fp$ in this section for simplicity.},
$d = p^k$, and assume that the recursion has an ``empty middle": $\deg a_i < i$ for $0 < i < d$.

\begin{thm}[Toy case of NGT]\label{toyngtthm}
Let $q = p^k$ for some $k \geq 1$. Suppose $T: \fp[y] \to \fp[y]$ is a degree-lowering linear operator so that the sequence $\{T(y^n)\}_n$ satisfies an $\fp[y]$-linear recursion with companion polynomial $$P = X^q + (\mbox{terms of total degree} < q) + ay^q \quad \in \fp[y][X]$$
for some $a \in \fp$. Then $N_T(y^n) \ll n^{\log (q-1)/\log q}.$
\end{thm}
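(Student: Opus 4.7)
The plan is to construct a \emph{nilgrowth witness}: a function $c\colon \fp[y] \to \N \cup \{-\infty\}$ satisfying
\begin{enumerate}
\item[(a)] $c(f+g) \leq \max\{c(f), c(g)\}$ and $c(fg) \leq c(f) + c(g)$;
\item[(b)] $c(y^n) = O(n^\alpha)$ with $\alpha = \log(q-1)/\log q$;
\item[(c)] $c(T f) < c(f)$ whenever $f \neq 0$.
\end{enumerate}
Granting these, property (c) forces $N_T(f) \leq c(f) + 1$, and then (b) delivers $N_T(y^n) \ll n^\alpha$ as required.

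\textbf{Defining $c$.} For $n \in \N$ with base-$q$ expansion $n = \sum_{i \geq 0} n_i q^i$, set
\[
c(y^n) \;:=\; \sum_{i \geq 0} n_i (q-1)^i,
\]
and extend to $\fp[y]$ via $c(f) := \max\{c(y^n) : \text{the coefficient of } y^n \text{ in } f \text{ is nonzero}\}$, with $c(0) = -\infty$. Property (b) is immediate because $n \geq q^s$ forces at least $s+1$ base-$q$ digits, giving $c(y^n) \leq (q-1) \cdot (q-1)^{\lfloor \log_q n\rfloor + 1}/(q-2)$. Property (a) for products reduces to $c(y^{n+m}) \leq c(y^n) + c(y^m)$, which follows digit-by-digit: each carry in base-$q$ addition swaps a contribution of $q(q-1)^i$ for $(q-1)^{i+1}$, a net change of $-(q-1)^i \leq 0$.

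\textbf{Establishing (c) by strong induction on $n$.} For $n < q$, degree-lowering already gives $c(T y^n) \leq \deg(T y^n) < n = c(y^n)$. For $n \geq q$, let $s \geq 1$ be determined by $q^s \leq n < q^{s+1}$, and invoke the $p^{(s-1)k}$-th power of the defining recursion (Corollary \ref{cor:ppowercharp}):
\[
T(y^n) \;=\; \sum_{i=1}^{q} a_i^{q^{s-1}} \, T\bigl(y^{n - i q^{s-1}}\bigr).
\]
By (a) and the induction hypothesis applied to each summand (noting that the $i = q$ term vanishes when $n = q^s$, since $T(1) = 0$), it suffices to establish the termwise inequality
\[
c\bigl(a_i^{q^{s-1}}\bigr) + c\bigl(y^{n - i q^{s-1}}\bigr) \;\leq\; c(y^n) \qquad\text{for each } 1 \leq i \leq q.
\]
Frobenius in characteristic $p$ turns the first term into $c(a_i^{q^{s-1}}) = (q-1)^{s-1}\,c(a_i)$: if $a_i = \sum_m b_m y^m$, then $a_i^{q^{s-1}} = \sum_m b_m^{q^{s-1}} y^{m q^{s-1}}$, and $c(y^{m q^{s-1}}) = (q-1)^{s-1} c(y^m)$ because multiplying by $q^{s-1}$ shifts base-$q$ digits up by $s-1$ positions. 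The empty-middle hypothesis $\deg a_i < i$ then yields $c(a_i) \leq i-1$ for $0 < i < q$, and $c(a_q) \leq q-1$ for the boundary case.

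\textbf{Main obstacle.} The delicate point is controlling $c(y^{n - iq^{s-1}})$, because $c$ is \emph{not monotone} in $n$: base-$q$ subtraction can borrow across many digits and potentially \emph{raise} $c$ even as $n$ decreases. In the no-borrow regime ($n_{s-1} \geq i$ for $0 < i < q$, or $n_s \geq 1$ for $i = q$), the subtraction cleanly reduces $c$ by $i(q-1)^{s-1}$, which beats the tax $(i-1)(q-1)^{s-1}$ from $c(a_i^{q^{s-1}})$ by a full $(q-1)^{s-1} \geq 1$; the $i = q$ case balances exactly, with strictness coming from the induction's $-1$. The genuine work is the borrow case, which demands a careful digit-level analysis of how base-$q$ subtraction interacts with the base-$(q-1)$ reading — precisely the inequalities to be developed in sections \ref{contsec} and \ref{sec:ineqs}. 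The calibration of the recursion scale $s$ to the leading base-$q$ digit position of $n$ is what confines the borrow events to a narrow window of positions and lets the margin $(q-1)^{s-1} \geq 1$, built into the empty-middle slack, absorb the resulting discrepancies.
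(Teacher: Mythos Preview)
Your approach is essentially the paper's: same content function $c$, same scaling of the recursion by a $q$-power matched to the leading digit of $n$, same inductive comparison. Your use of submultiplicativity $c(fg) \leq c(f) + c(g)$ packages the paper's monomial-by-monomial tracking a bit more cleanly, but it unwinds to the same inequality $c(y^n) - c(y^{n - iq^{s-1}}) \geq (i-1)(q-1)^{s-1}$ for $1 \leq i < q$, and equality $(q-1)^s$ for $i = q$.

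The gap is that you defer the borrow case to sections~\ref{contsec} and~\ref{sec:ineqs}, calling it ``the genuine work.'' For the toy theorem this is an overstatement: the calibration $q^s \leq n < q^{s+1}$ forces $n_s \geq 1$, so subtracting $iq^{s-1}$ with $1 \leq i \leq q-1$ borrows \emph{at most once}, from position $s$ into position $s-1$. In that single-borrow case the digit at position $s-1$ becomes $n_{s-1} + q - i$ and the digit at position $s$ drops by $1$, giving
\[
c(y^n) - c(y^{n - iq^{s-1}}) = (q-1)^s - (q - i)(q-1)^{s-1} = (i-1)(q-1)^{s-1},
\]
which is exactly the bound you need (this is the content of Proposition~\ref{babycontent}\eqref{iDitem}). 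The machinery of sections~\ref{contsec}--\ref{sec:ineqs} is only needed for the general empty-middle case where $d$ is not a power of $p$; here the borrow analysis is two lines and you should simply include it.
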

Most of the main features of the proof of Theorem~\ref{ngtthm} are already present in the proof of Theorem~\ref{toyngtthm}. We give it here because the proof is technically much simpler; understanding it may suffice for all but the most curious readers. 

\subsection{The content function}
Following \jb\ in \cite[appendix]{BK} for $q = 3$, we define a function $c : \N \to \N$ depending on $q$ as follows. Given an integer $n$, we write it base $q$ as $n = \sum_i n_i q^i$ with $0 \leq n_i < q$, only finitely many of which are nonzero, and define the \emph{$q$-content of $n$} as
$c(n) := c_q(n) := \sum_i n_i (q - 1)^i.$ 
For example, since $71 = [2\,4\, 1]_5$ in base $5$, the $5$-content of 71 is $2\cdot4^2 + 4\cdot4 + 1 = 49.$

The following properties of the content function are easy to check. See also section \ref{contsub}, where the content function and variations are discussed in detail. 
\begin{prop}\label{babycontent}
\begin{enumerate}\vspace{-\listeria}
\item \label{babygrowth} $c(n) \ll n^{\log_q(q-1)}$
\item \label{babyscale} $c(q^k n) = (q-1)^k c(n)$ for all $k \geq 0$ 
\item \label{babybase} If $0 \leq n < q$, then $c(n) = n$.
\item \label{iDitem} If $i$ is a digit base $q$ and $n \geq i$ has no more than $2$ digits base $q$, then $c(n) - c(n - i)$ is either $i$ or $i - 1$. 
\item \label{iditem} If $q \leq n < q^2$, then $c(n - q) = c(n) - q + 1$.
\end{enumerate} 
\end{prop}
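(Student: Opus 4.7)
The plan is to verify all five items directly from the base-$q$ expansion of $n$, handling them essentially in increasing order of delicacy. For every claim, I will write $n = \sum_{i \geq 0} n_i q^i$ with $0 \leq n_i < q$ (only finitely many nonzero) so that $c(n) = \sum_i n_i (q-1)^i$ by definition, and then track how the digit expansion changes under the operation in question.

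First, items (\ref{babyscale}) and (\ref{babybase}) are essentially definitional. For (\ref{babyscale}), multiplying by $q^k$ simply shifts the base-$q$ digits of $n$ up by $k$ positions, so every $(q-1)^i$ in the defining sum for $c(n)$ gets replaced by $(q-1)^{i+k}$, yielding the stated factor of $(q-1)^k$. For (\ref{babybase}), the hypothesis $0 \leq n < q$ says that $n$ is a single base-$q$ digit, so $c(n) = n \cdot (q-1)^0 = n$. For (\ref{babygrowth}), if $n$ has $k+1$ base-$q$ digits, then on the one hand $n \geq q^k$, and on the other each digit is at most $q-1$, so $c(n) \leq (q-1)\sum_{i=0}^{k}(q-1)^i$, which is $O\bigl((q-1)^{k+1}\bigr)$. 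Combining these with $n^{\log_q(q-1)} \geq (q-1)^k$ gives the claimed growth bound.

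The main content is (\ref{iDitem}), where some care is needed to handle the borrow. Write $n = n_1 q + n_0$ with $0 \leq n_0, n_1 < q$. The argument splits on whether or not subtracting $i$ triggers a borrow from the $q$'s digit. In the no-borrow case $n_0 \geq i$, the expansion of $n-i$ is $n_1 q + (n_0 - i)$, which reads off as $c(n-i) = c(n) - i$. In the borrow case $n_0 < i$, the constraint $n \geq i$ forces $n_1 \geq 1$, and the valid base-$q$ expansion of $n-i$ becomes $(n_1-1)q + (q + n_0 - i)$; substituting into the definition of $c$, the shift exchanges a $(q-1)$ for a $+q$, giving $c(n-i) = c(n) - i + 1$. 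These are exactly the two possibilities in the statement. Finally, (\ref{iditem}) is a direct computation in the same spirit: for $q \leq n < q^2$ with $n = n_1 q + n_0$ and $n_1 \geq 1$, subtracting $q$ produces the base-$q$ expansion $(n_1 - 1)q + n_0$, whence $c(n-q) = c(n) - (q-1)$.

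No step presents a genuine obstacle: everything reduces to bookkeeping with base-$q$ digits. The only place where one must be slightly attentive is the borrow analysis in (\ref{iDitem}), to confirm that $n_1 \geq 1$ under the hypothesis $n \geq i$ and that the borrowed digit $q + n_0 - i$ indeed lies in $[0,q)$; both are immediate from $0 \leq n_0 < i < q$.
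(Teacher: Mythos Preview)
Your proof is correct and is exactly the kind of direct digit-by-digit verification the paper has in mind; indeed, the paper does not give a proof of this proposition at all, merely remarking that the properties ``are easy to check'' and pointing forward to the general content-function discussion in section~\ref{contsub}. Your handling of the borrow in part~(\ref{iDitem}) is clean and complete.
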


\subsection{Setup of the proof}

We now define the $q$-content of a polynomial $f \in \fp[y]$ through the $q$-content of its degree. More precisely, if $0 \neq f = \sum a_n y^n$, let $\cc(f) := \max \{c(n): a_n \neq 0\}$. Also set $\cc(0) := -\infty$. For example, the $3$-content of $2 y^9 + y^7 + y^2$ is $\max\{c(9), c(7), c(2)\} = 5$. 

Now let $T: \fp[y] \to \fp[y]$ be a degree-lowering recursion operator whose companion polynomial 
$$P = X^q + a_1(y) X^{q - 1} + \cdots + a_q(y) \in \fp[y][X]$$
satisfies $\deg a_i(y) < i$ for $1 \leq i < q$ and $\deg a_q(y) \leq q$.

To prove Theorem \ref{toyngtthm}, we will show that $T$ lowers the $q$-content of any $f \in \fp[y]$: that is, that $\cc(Tf) < \cc(f)$. Since $\cc(f) < 0$ only if $f = 0$, the fact that $T$ lowers $q$-content implies that $N_T(f) \leq c(f)$. Proposition~\ref{babycontent}\eqref{babygrowth} will then imply $N_T(f) \ll (\deg f)^{\log(q-1)/\log q}$, as desired.

It suffices to prove that $\cc(Tf) < \cc(f)$ for $f = y^n$. We will proceed by strong induction on $n$, each time using recursion of order $q^{k + 1}$ corresponding to $P^{q^k}$, with $k$ chosen so that $q^{k+1} \leq n < q^{k + 2}$. This will allow us to compare the $q$-contents of $n$ and $n - iq^k$ for $i$ small, so that the last $k$ digits base $q$ are unchanged, rather than comparing $q$-contents of $n$ and $n - i$ for $i$ small, which can be very destructive to digits base $q$.\footnote{I learned this technique from Gerbelli-Gauthier's proof \cite{mathilde} of the key technical lemmas of Nicolas-Serre \cite{NS1}.} The base case is $n < q$, in which case being $q$-content-lowering is the same thing as being degree-lowering (Proposition \ref{babycontent}\eqref{babybase}).

\subsection{The induction}

For $n \geq q$, we must show that $\cc(T(y^n)) < c(n)$ assuming that $\cc(T(y^m)) < c(m)$ for all $m < n$.  As above, choose $k \geq 0$ with $q^{k + 1} \leq n < q^{k + 2}$. By Corollary \ref{cor:ppowercharp}, the sequence $\{T(y^n)\}$ satisfies the order-$q^{k + 1}$ recurrence  
$$T(y^{n}) =  a_1(y)^{q^k} \,T(y^{n - q^k}) + a_2(y)^{q^k} \,T(y^{n - 2q^k}) + \cdots + a_q(y)^{q^k}\, T(y^{n - q^{k + 1}}).$$
Pick a term $y^m$ appearing in $T(y^n)$ with nonzero coefficient; we want to show that $c(m) < c(n)$. From the recursion, $y^m$ appears with nonzero coefficient in $a_i(y)^{q^k} T(y^{n - i  q^k})$ for some $i$. More precisely, $y^m$ appears in $y^{j q^k} T(y^{n - i q ^k}) $ for some $y^j$ appearing in $a_i(y)$, so that either $j < i$ or $i = j = q$.
Then $y^{m - j q^k}$ appears in $T(y^{ n - i q^k})$, and by induction we know that $c(m - j q^k) <  c(n - i q^k)$. To conclude that $c(m) < c(n)$, it would suffice to show that 
$$c(n) - c(m) \geq c(n - i q^k) - c(m - j q^k),$$ or, equivalently, that
$$c(n) - c(n - i q^k) \geq c(m) - c(m - j q^k).$$

Since subtracting multiples of $q^k$ leaves the last $k$ digits of $n$ base $q$ untouched, we may replace $n$ and $m$ by $q^k\lfloor \frac{n}{q^k} \rfloor$ and $q^k\lfloor \frac{m}{q^k} \rfloor$, respectively, and then use Proposition \ref{babycontent}\eqref{babyscale} to cancel out a factor of $(q-1)^k$. In other words, we must show that  
$$c(n) - c(n - i) \geq c(m) - c(m -j)$$
for $n$, $m$, $i$, $j$ satisfying $i \leq n < q^2$ and $j \leq m < n$ and either $j < i$ or $i = j = q$. 
But this is an easy consequence Proposition \ref{babycontent}\eqref{iDitem}--\eqref{iditem}: For $j < i$, we know that $c(n) - c(n - i)$ is at least $i - 1$ and $c(m) - c(m -j)$ is at most $j \leq i - 1$. And for $i = j = q$ both sides equal $q - 1$. 

This completes the proof of Theorem \ref{toyngtthm}. 

\subsection{Toy case vs. general case}

 The proof of the full NGT (Theorem \ref{ngtthm}) proceeds by first reducing to the empty-middle case over a finite field (Theorem \ref{specialngtthm} below), of which Theorem~\ref{toyngtthm} is a special case. Apart from the reduction step, most of the difficulty in generalizing from Theorem~\ref{toyngtthm} to Theorem \ref{specialngtthm} comes from working with more general versions of the content function. In particular, we must extend the notion of content to rational numbers and prove sufficiently strong analogues of Proposition \ref{babycontent} for the proof to proceed: see sections \ref{contsec}--\ref{nilgrowthsec}.
\section{Applications to mod-$p$ Hecke algebras for $p = 2, 3$} \label{appstoheckesec}

This section gives an indication of how the NGT can give information about lower bounds of mod-$p$ Hecke algebras, the author's main motivation for proving the theorem. More precisely, in this section we will use Theorem \ref{toyngtthm} to complete the proof of Theorem 24\footnote{We prove a weaker version of Proposition 35 {\it loc.\ \!cit.} In the notation of subsection \ref{contentdefsec} here, we show that, for $f \in \F_3[\dd]$, we have $c_{3, 2}(T_2 f) \leq c_{3, 2}(f)-1$ and that $c_{9, 6}(T_7'f) \leq c_{9, 6}(f) -3$, which suffice to establish that the nilpotence index grows slower than linearly. We do not show that $c_{3, 2}( T_7'f) \leq c_{3, 2}(f) - 2$.} of \cite[appendix]{BK}, which establishes the structure of the mod-$3$ Hecke algebra of level one. Simultaneously and using the same methods, we will give an alternate proof of the main result of \cite{NS2}, the structure of the mod-$2$ Hecke algebra in level one. See Theorem \ref{heckeappthm} below. 

More generally, the NGT can be used to obtain lower bounds on Krull dimensions of local components of big mod-$p$ Hecke algebras acting on forms of level $N$ in the case where $X_0(Np)$ has genus zero, for this is precisely the condition for the algebra of modular forms of level $N$ mod $p$ to be a polynomial algebra over $\fp$. For more details, see \cite{medved} (for level one) or \cite{medved:heckedim}, to appear in the future. To generalize the nilpotence method to all $(p, N)$, one must generalize the NGT to all rings of $S$-integers in characteristic-$p$ global fields. 

We work in level one, and let $p \in \{2, 3\}$. 

Let $M = M(1, \fp) \subset \fp\lb q \rb$ be the space of modular forms of level one modulo $p$ in the sense of Swinnterton-Dyer and Serre (that is, reductions of integral $q$-expansions). For $p = 2, 3$, Swinnterton-Dyer observes \cite{SwDy} that $M = \fp[\dd]$, where $\dd = \prod_{i = 1}^n (1 - q^n)^{24} \in \fp \lb q \rb$; standard dimension formulas show that $M_k := \fp[\dd]_k$, the polynomials in $\dd$ of degree bounded by $k$, coincides with the space of mod-$p$ reductions of $q$-expansions of forms of weight $12k$, and hence Hecke-invariant. Further, one can show that $K =  \big\langle \dd^n : p \nmid n  \big\rangle_{\fp} \subset M$
is the kernel of the operator $T_p$. 
In particular, $K$, and hence every finite-dimensional subspace $K_k := M_k \cap K$, is Hecke-invariant. 

Let $A_k \subset \eend_\fp(K_k)$ be the algebra generated by the action of the Hecke operators $T_\ell$ with $\ell$ prime and $\ell \neq p$. Since $K_k \into K_{k + 1}$, we have $A_{k + 1} \onto A_k$. Let $A: = \fromlim_{k} A_k$. Then $A$ is a profinite ring embedding into $\eend(K)$: it is the shallow Hecke algebra acting on forms of level one mod~$p$. The standard pairing $A \times K \to \fp$ given by $\langle T, f \rangle \mapsto a_1(Tf)$ is nondegenerate on both sides and continuous in the profinite topology on $A$. Therefore $A$ is in continuous duality with~$K$.  
By work of Tate and Serre \cite{Tmod2, Smod3} we know that $\Delta$ is the only Hecke eigenform in $K \otimes \fpbar$.\footnote{Alternatively, one can use an observation of Serre to conclude that any Hecke eigenform in $K\otimes \algebraiclosure\F_p$ is in fact defined over $\fp$, reducing the eigenform search to a finite computation. See \cite[section 1.2 footnote]{BK}.} This implies that $A$ is a local $\fp$-algebra with maximal ideal~$\mm$ and residue field $\fp$ generated by the modified Hecke operators $T_\ell' := T_\ell - a_\ell(\dd)$, acting locally nilpotently on $M = \fp[\dd]$.  Using deformation theory, we deduce:
\begin{prop}\label{twogenprop}
There is a surjection $\fp\lb x, y \rb \onto A$ given by 
$\begin{cases} 
x \mapsto T_3,\ y \mapsto T_5 & \mbox{ if $p = 2$} \\
x \mapsto T_2,\ y \mapsto T'_7  & \mbox{ if $p = 3$.} 
\end{cases}$
\end{prop}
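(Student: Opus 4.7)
The plan rests on the fact that $A$ is a complete noetherian local $\fp$-algebra with residue field $\fp$: by the topological Nakayama lemma, a continuous surjection $\fp\lb x, y \rb \onto A$ sending $x, y$ to specified elements $t_1, t_2 \in \mm$ exists precisely when the images of $t_1, t_2$ span the cotangent space $\mm/\mm^2$ as an $\fp$-vector space. So the proof reduces to two tasks: bound $\dim_\fp \mm/\mm^2 \leq 2$, and verify that the specified pair has linearly independent images in $\mm/\mm^2$.

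For the dimension bound I would apply Mazur-style deformation theory to the residual pseudo-representation $\tbar := \tr \rhobar_{\dd}$ attached to $\dd$ mod $p$. Since $\dd$ is the unique Hecke eigenform in $K \otimes \fpbar$, every Hecke eigensystem on $K$ arises from a pseudo-character of $\gal(\Qbar/\Q)$ lifting $\tbar$ and unramified outside $p$; this produces a canonical continuous surjection from the universal pseudo-deformation ring $R$ of $\tbar$ (in the sense of Chenevier) onto $A$. The tangent space of $R$ is then bounded by the $\fp$-dimension of $H^1(\gal(\Qbar/\Q), \ad \tbar)$ restricted to classes unramified outside $p$. For $p \in \{2, 3\}$ and level one, a direct class-field-theoretic computation, exploiting that $\Q(\zeta_p)$ has class number one together with the very restricted shape of $\tbar$ (trivial for $p = 2$ by the Ramanujan congruence $\tau(\ell) \equiv 1 + \ell \equiv 0 \pmod 2$ for odd $\ell$, and an equally simple shape for $p = 3$), yields $\fp$-dimension exactly $2$.

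To verify the specified generators, I would use the continuous perfect pairing $A \times K \to \fp$ given by $(T, f) \mapsto a_1(Tf)$. Under this pairing, two elements $t_1, t_2 \in \mm$ are linearly independent modulo $\mm^2$ iff there exist $f_1, f_2$ in the $\mm^2$-annihilator inside $K$ such that the $2 \times 2$ matrix $\bigl(a_1(t_i f_j)\bigr)$ is invertible over $\fp$. That annihilator is a concrete low-dimensional subspace of $K$ spanned by $\dd$ together with a small explicit collection of low-exponent $\dd^n$, on which the values $a_1(T_\ell \dd^n)$ can be computed entirely inside $\fp \lb q \rb$ via the standard Hecke recursion; a direct hand-check then shows that the pair $(T_3, T_5)$ for $p = 2$, and $(T_2, T'_7)$ for $p = 3$, yields an invertible pairing matrix.

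The main obstacle is the cohomological computation for $p = 2$, where the reducibility---indeed triviality---of $\rhobar_\dd$ forces one into the pseudo-deformation framework and one must rule out spurious deformations coming from non-semisimple reducible lifts; it is precisely the combination of level one with the small primes (where the ambient class and unit groups are tightly controlled) that makes the tangent-space bound come out to exactly $2$. The $p = 3$ case was carried out in \cite[appendix]{BK}, and the $p = 2$ case is recoverable from \cite{NS2}; both ultimately boil down to the two-generator statement above.
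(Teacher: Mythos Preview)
The paper does not actually prove this proposition: immediately after stating it, the author defers to the literature, noting that the $p=2$ case was first established (without deformation theory) by Nicolas--Serre in \cite{NS1}, that the $p=3$ case is handled in \cite[appendix]{BK} via deformation theory of reducible pseudocharacters as in \cite{Bpseudodef}, and that detailed arguments appear in \cite[Chapter 7]{medved}. So there is no in-paper proof to compare against; your proposal is essentially a sketch of what those cited references do.

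Your outline is correct in substance and matches the deformation-theoretic route the paper points to. A few remarks on precision. First, you assume at the outset that $A$ is noetherian, but a priori $A$ is only known to be profinite local with residue field $\fp$; the noetherianness (equivalently, finiteness of $\dim_{\fp}\mm/\mm^2$) is exactly what the surjection from the universal pseudodeformation ring $R$ supplies, so the logic should run: construct $R \twoheadrightarrow A$, bound the tangent space of $R$, then apply topological Nakayama. Second, in the reducible (here, trivial for $p=2$) case the tangent space of the pseudodeformation ring is not literally $H^1$ with coefficients in $\ad\bar\tau$; one has to work with Chenevier's or Bella\"iche's description of the tangent space of a pseudocharacter deformation functor, which for a sum of characters involves several Ext groups and, crucially, a quotient identifying non-semisimple lifts that give the same trace. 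You flag this issue but do not resolve it; the actual computation for $p\in\{2,3\}$ in level one is what occupies the cited references. Third, the construction of the map $R \to A$ requires producing an $A$-valued pseudorepresentation of $\gal(\Qbar/\Q)$ unramified outside $p$ whose trace of Frobenius at $\ell$ is $T_\ell$; this is standard (glue Galois representations attached to characteristic-zero eigenforms and reduce, or argue via congruences), but it is a genuine step you pass over.

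None of these are fatal gaps for a proof \emph{sketch}; they are precisely the places where one would point to \cite{Bpseudodef}, \cite[appendix]{BK}, or \cite[Chapter 7]{medved} for the details, which is exactly what the paper itself does.
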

 
For $p = 2$, the fact that $A$ is generated by $T_3$ and $T_5$ was first proved without deformation theory in~ \cite{NS1}. For $p = 3$, Proposition \ref{twogenprop}  is stated \cite[appendix]{BK}, using deformation theory of reducible pseudocharacters, as in \cite{Bpseudodef}. See also \cite[Chapter 7]{medved} for detailed deformation theory arguments for reducible Galois pseudocharacters in level one. 

The main result of this section is the following theorem:  

\begin{thm}\label{heckeappthm}
The surjection $\fp\lb x, y \rb \onto A$ of Proposition \ref{twogenprop} is an isomorphism. 
\end{thm}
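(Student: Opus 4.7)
The plan is to reduce the theorem to showing that $\dim A \geq 2$. Since $\fp\lb x, y \rb$ is a two-dimensional regular local domain, the kernel of any surjection onto a local ring of Krull dimension at least $2$ must be a prime of height $\leq 0$, hence zero. Because $A$ is Noetherian as a quotient of $\fp\lb x, y \rb$, the Hilbert--Samuel function $n \mapsto \dim_\fp A/\mm^n$ is eventually polynomial of degree equal to $\dim A$, so it suffices to show this function grows faster than linearly in $n$.

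First I would use the continuous duality between $A$ and $K$ given by the pairing $\langle T, f \rangle := a_1(Tf)$ to rewrite $\dim_\fp A/\mm^n = \dim_\fp K[\mm^n]$, where $K[\mm^n] := \{f \in K : \mm^n f = 0\}$ is the $\mm^n$-torsion in $K$. This turns the problem into producing many elements of $K$ annihilated by $\mm^n$.

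Next I would apply the toy NGT (Theorem \ref{toyngtthm}), or an appropriate variant, to each of the two Hecke operators generating $\mm$: $T_3$ and $T_5$ when $p=2$, $T_2$ and $T_7'$ when $p=3$. Each acts on $M = \fp[\dd]$ as a degree-lowering recursion operator, since the Hecke recursion for $T_\ell$ has order $\ell+1$, so the NGT yields constants $\alpha_i < 1$ with $N_{T_i}(\dd^k) \ll k^{\alpha_i}$. Since distinct Hecke operators commute, if both nilpotence indices on $\dd^k$ are $\leq C k^{\alpha}$ with $\alpha = \max \alpha_i$, then by pigeonhole every degree-$n$ monomial in the two generators kills $\dd^k$ whenever $n > 2 C k^{\alpha}$. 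Hence $\dd^k \in K[\mm^n]$ for $k \ll n^{1/\alpha}$. Counting the $\gg M$ integers $k \leq M$ coprime to $p$ (those with $\dd^k \in K$), one obtains $\gg n^{1/\alpha}$ linearly independent elements of $K[\mm^n]$, and since $1/\alpha > 1$ this is the required super-linear growth, forcing $\dim A \geq 2$.

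The main obstacle will be verifying the NGT hypotheses for the specific Hecke operators. For $T_3$ at $p=2$ and $T_2$ at $p=3$, the Hecke recursion order $\ell+1$ equals $q = p^k$ directly ($4 = 2^2$ and $3 = 3^1$), so after computing the recursion in $\fp[\dd][X]$ and checking the empty-middle and leading-coefficient conditions, the toy NGT applies as stated. For $T_5$ at $p=2$ (order $6$) and $T_7'$ at $p=3$ (order $8$), the order is not a pure prime power, so one needs a variant of the toy NGT using a generalized content function $c_{q,r}$ with $q$ a power of $p$ exceeding the order and $r < q-1$ a suitable smaller ``reading base''; the footnote signals that $c_{9, 6}$ is the right choice for $T_7'$. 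Executing these Hecke-recursion computations in $\fp[\dd][X]$ and proving the content-lowering inequalities for the variant content functions is the technical heart of the section, though only the existence of \emph{some} $\alpha_i < 1$ is needed so the precise constants do not matter for the dimension argument.
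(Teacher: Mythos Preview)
Your overall architecture is exactly the paper's: use the duality $\dim_{\fp} A/\mm^n = \dim_{\fp} K[\mm^n]$, bound the nilpotence indices of the two Hecke generators by $n^\alpha$ via the toy NGT, count $\dd^k$ killed by $\mm^n$ to get Hilbert--Samuel growth $\gg n^{1/\alpha}$, and conclude $\dim A \geq 2$ so the surjection from $\fp\lb x,y\rb$ is an isomorphism by the Hauptidealsatz. (One nitpick: the kernel need not be prime; the correct statement is that a nonzero ideal in a two-dimensional regular local domain has height $\geq 1$, so the quotient has dimension $\leq 1$.)

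Where you diverge is in the verification of the NGT hypotheses for $T_5$ (at $p=2$) and $T_7'$ (at $p=3$), and here the paper's route is simpler than the one you outline. First, the recursion for $T_7'$ has order $9$, not $8$: the order-$8$ recursion for $T_7$ picks up an extra factor $X-\dd$ when one passes to $T_7' = 1 + T_7$ (see the footnote after \eqref{eq:rec3}), and $9 = 3^2$ is already a prime power, so Theorem~\ref{toyngtthm} applies directly with $q=9$. Second, for $T_5$ at $p=2$ the paper does not develop a variant of the toy NGT; instead it multiplies the degree-$6$ companion polynomial $P_5$ by $X^2+\dd^2$ to obtain $P_5' = X^8 + \dd X^3 + \dd^3 X + \dd^8$ (see the line after \eqref{eq:rec2}), which has prime-power order $8$ and empty middle, so Theorem~\ref{toyngtthm} again applies directly. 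The generalized content $c_{9,6}$ you mention is used only to sharpen the value of $\alpha$ afterwards, not to make the argument go through.
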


The key input will be Theorem \ref{toyngtthm}, as well as the following observation: if $T$ is any Hecke operator and $f$ is a modular form in a Hecke-invariant algebra $M$, then the sequence $\{T(f^n)\}_n$ satisfies an $M$-linear recursion. For more details on the Hecke recursion, see \cite[Chapter 6]{medved} or the forthcoming \cite{medved:heckedim}, but here we will only need some special cases for $f = \dd$ already given in \cite{NS1} and \cite[appendix]{BK}. For $p = 2$, we have (see \cite[equations (13)--(14)]{NS1})
\begin{equation}\label{eq:rec2}
\begin{aligned}
T_3(\dd^n) &= \dd \: T_3(\dd^{n-3}) + \dd^4\: T_3(\dd^{n - 4}), \quad n\geq 3\\
T_5(\dd^n) & = \dd^2 \: T_5(\dd^{n-2}) + \dd^4\: T_5(\dd^{n-4}) + \dd\: T_5(\dd^{n-5}) + \dd^6 \: T_5(\dd^{n-6}),\quad n \geq 6
\end{aligned}\end{equation}
with companion polynomials $P_3 = X^4 + \dd X + \dd^4$ and $P_5 = X^6 + \dd^2 X^4 + \dd^4 X^2 + \dd X + \dd^6$. Note that $\{T_5(\dd^n)\}_n$ also satisfies the recursion defined by $P'_5 = P_5(X^2 + \dd^2)= X^8 + \dd X^3 + \dd^3 X + \dd^8$. And for $p = 3$, the recursions satisfied by $\{T_2(\dd^n)\}$ and $\{T'_7(\dd^n)\}$ have companion polynomials
\begin{equation}\label{eq:rec3}
\begin{aligned}
P_2 &= X^3 - \dd X + \dd^3,\\% \quad n \geq 3 \\
P_7 &= X^9 - \dd X^5 -  \dd^2 X^4 + (\dd^4 - \dd) X^2 + (\dd^5 + \dd^2)X  - \dd^9.% \quad n \geq 9. 
\end{aligned}\end{equation}
See Lemma 33\footnote{Lemma 33 {\it loc.\ \!cit.} gives the degree-8 recurrence satisfied by $\{T_7(\dd^n)\}_n$; the recurrence satisfied by $\{\dd^n + T_7(\dd^n)\}_n$ has an extra factor of $X - \dd$.} in \cite[appendix]{BK}. 

\begin{proof}[Proof of Theorem \ref{heckeappthm}]
 Let $T$ and $S$ be the generators of $A$ from Proposition \ref{twogenprop}. Then $T, S$ are filtered and degree-lowering recursion operators on $\fp[\dd]$, each satisfying the conditions of Theorem \ref{toyngtthm}. In other words, there exists an $\alpha < 1$ so that $N(\dd^n) : = N_T(\Delta^n) + N_S(\dd^n) \ll n^\alpha$.
 
 We now claim that the Hilbert-Samuel function of $A$ grows \emph{faster than linearly}, so that the Krull dimension of $A$ is at least $2$. Indeed, the Hilbert-Samuel function of $A$ sends a positive integer $k$ to  
\begin{align*}
\dim_{\fp} A/\mm^k = \dim_{\fp} K[\mm^k] &\geq \#\{n : \dd^n \in K,\ \mm^k \dd^n = 0\}\\ &= \#\{ n \mbox{ prime to $p$ } : N(\Delta^n) \geq k\} \gg k^{1/\alpha}, 
\end{align*}
which is certainly faster than linear, since $\frac{1}{\alpha} > 1$. Therefore it grows at least quadratically, and the Krull dimension of $A$ is at least $2$. By the Hauptidealsatz, the kernel of the surjection $\fp\lb x, y \rb \onto A$ from Proposition \ref{twogenprop} is trivial.
\end{proof}

Using the more precise bounds on $\alpha$ from Theorem \ref{specialngtthm}, we can conclude that, for $p = 2$ we have $\alpha = \max\{\log_4 2, \log_8 4\} = \frac{2}{3}$; and for $p = 3$ we have $\alpha = \max\{\log_3 2, \log_9 6\} \approx 0.815$. Compare to $\alpha = \frac{1}{2}$ obtained for $p = 2$ by Nicolas and Serre in \cite[4.1]{NS1}. Computations suggest that $\alpha = \frac{1}{2}$ also holds for $p = 3$, but we have not been able to prove this.

%%%%%%%%%%%%%%%%%%%%%%%%%%%%%%%%%%%%%%%%%%%%%%%
%%%%%%%%%%%%%%%%%%%%%%%%%%%%%%%%%%%%%%%%%%%%%%%
%%%%%%%%%%%%%%%%%%%%%%%%%%%%%%%%%%%%%%%%%%%%%%%
\section{The proof of the NGT begins}\label{proofbeginsec}
%%%%%%%%%%%%%%%%%%%%%%%%%%%%%%%%%%%%%%%%%%%%%%%
%%%%%%%%%%%%%%%%%%%%%%%%%%%%%%%%%%%%%%%%%%%%%%%

We now begin the proof of Theorem \ref{ngtthm}. 
\subsection{Overview of the proof}\label{overproofsec}
The proof proceeds as follows. 

\begin{enumerate}
\item {\bf Reduce the NGT to the case where $R$ is a finite field:} See subsection~\ref{reducetoFsec} below. 
\item {\bf Reduce to the empty-middle case:} The NGT over a finite field is implied by a special empty-middle case (Theorem \ref{specialngtthm}), where the companion polynomial has no terms of maximal total degree except for $X^d$ and $y^d$ (i.e., the highest-degree homogeneous part has an empty middle).  Note that Theorem \ref{specialngtthm} holds over any ring of characteristic $p$. See subsection \ref{specialngtsec} below for the statement of Theorem \ref{specialngtthm} and the reduction step. 
\item {\bf Prove Theorem \ref{specialngtthm}:} The main idea of the proof is as follows. Given the operator $T$ satisfying the conditions of Theorem \ref{specialngtthm}, we define a function $c_T: \N \to \N$ that grows like $n^{\alpha}$ for some $\alpha < 1$. We extend this function to polynomials in $R[y]$ via the degree. Finally, we use strong induction to prove that applying $T$ strictly lowers the $c_T$-value of any polynomial in $R[y]$. Therefore, $N_T(y^n)$ is bounded by $c_T(n) \asymp n^{\alpha}$.

The key features of this kind of proof are already present in the proof of Theorem \ref{toyngtthm} in section \ref{toyngtsec}. 
\end{enumerate}

%%%%%%%%%%%%%%%%%%%%%%%%%%%%%%%%%%%%%%%%%%%%%%%
\subsection{Reduction to the case where $R$ is a finite field}\label{reducetoFsec}
%%%%%%%%%%%%%%%%%%%%%%%%%%%%%%%%%%%%%%%%%%%%%%%

\begin{prop}\label{reducetoFprop}
If Theorem \ref{ngtthm} is true whenever $R$ is a finite field, then Theorem \ref{ngtthm} is true.
\end{prop}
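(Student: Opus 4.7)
\textbf{Proof plan for Proposition \ref{reducetoFprop}.}

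The plan is to peel off structure in two stages: first decompose $R$ into a product of finite local rings, then reduce modulo the maximal ideal of each factor and control what happens on the nilpotent filtration above the residue field.

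\emph{Stage 1 (product decomposition).} Since $R$ is finite, $R \cong R_1 \times \cdots \times R_s$ with each $R_i$ a finite local ring. This induces $R[y] = \prod_i R_i[y]$, and the $R$-linear operator $T$ decomposes as $T = (T_1, \ldots, T_s)$ with each $T_i \in \eend_{R_i}(R_i[y])$. Conditions (\ref{Tdeglow}) and (\ref{Trecop}) pass componentwise: the $a_j$ and their degree bounds reduce to the corresponding $a_{j,i} \in R_i[y]$, and filtered linear recursions descend. For condition (\ref{T3}), an element of a product ring is a unit iff each of its components is, so the coefficient of $y^d$ in $a_{d,i}$ is a unit in $R_i$ for every $i$. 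Since $N_T(y^n) = \max_i N_{T_i}(y^n)$, it suffices to prove the theorem when $R$ is a finite local ring.

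\emph{Stage 2 (reduction mod $\mm$).} Let $R$ be finite local with maximal ideal $\mm$ and residue field $\F = R/\mm$, and let $K$ be the least integer with $\mm^K = 0$. The reduction $\bar T : \F[y] \to \F[y]$ of $T$ modulo $\mm$ is degree-lowering and satisfies the same filtered recursion over $\F[y]$ obtained by reducing the $a_j$ coefficient by coefficient. A unit in $R$ reduces to a nonzero, hence invertible, element of $\F$, so condition (\ref{T3}) descends. By hypothesis there exist $\alpha < 1$ and $C > 0$ with $N_{\bar T}(g) \leq C (\deg g)^\alpha$ for every nonzero $g \in \F[y]$.

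\emph{Stage 3 ($\mm$-adic filtration).} Since $T$ is $R$-linear, it preserves the filtration $R[y] \supset \mm R[y] \supset \mm^2 R[y] \supset \cdots \supset \mm^K R[y] = 0$. The key observation is that the associated graded pieces are controlled by $\bar T$: choosing an $\F$-basis $\{e_{i,1},\ldots,e_{i,r_i}\}$ of $\mm^i/\mm^{i+1}$, the natural identification
\[
\mm^i R[y]/\mm^{i+1}R[y] \;\cong\; (\mm^i/\mm^{i+1}) \otimes_\F \F[y]
\]
is $T$-equivariant, with $T$ acting as $\bar T$ on each $\F[y]$-summand (since the $e_{i,j}$ are $R$-scalars modulo $\mm^{i+1}$). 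I will show by induction on $i$ that for every $f \in \mm^i R[y]$ with $\deg f \leq n$ one has $T^{(K-i)\lceil C n^\alpha \rceil}(f) = 0$. The base case $i = K$ is trivial. For the inductive step, write $f \equiv \sum_j e_{i,j} \otimes h_j \pmod{\mm^{i+1}R[y]}$ with $h_j \in \F[y]$ of degree at most $n$; then $T^{\lceil C n^\alpha \rceil}(f) \equiv \sum_j e_{i,j} \otimes \bar T^{\lceil C n^\alpha \rceil}(h_j) \equiv 0 \pmod{\mm^{i+1}R[y]}$, so $T^{\lceil C n^\alpha \rceil}(f) \in \mm^{i+1}R[y]$ and has degree at most $n$, at which point the inductive hypothesis finishes the job. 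Applied to $f = y^n$ (with $i = 0$), this yields $N_T(y^n) \leq K \lceil C n^\alpha \rceil \ll n^\alpha$, with the same $\alpha$ as for $\bar T$.

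\emph{Main obstacle.} Each stage is essentially bookkeeping; the only nontrivial content is checking that the graded-piece argument in Stage 3 really does let the bound for $\bar T$ propagate through all $K$ layers of the $\mm$-adic filtration without degrading the exponent $\alpha$ (only the implicit constant worsens, by a factor of $K$). As foreshadowed in the paper's discussion of the constant $\alpha$, this is exactly the expected behavior.
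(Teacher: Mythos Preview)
Your proposal is correct and follows essentially the same approach as the paper: decompose into finite local factors, pass to the residue field to invoke the hypothesis, then climb the $\mm$-adic filtration one layer at a time using the bound for $\bar T$. The only cosmetic differences are that the paper packages the bound as the nondecreasing function $g(n) := \max_{n' \leq n}\{N_{\bar T}(y^{n'}) + 1\}$ (which cleanly avoids the off-by-one issue in passing from $N_{\bar T}(h_j) \leq C n^\alpha$ to $\bar T^{\lceil C n^\alpha\rceil}(h_j) = 0$), and phrases the filtration step directly (``$T^{g(\deg f)} f \in \mm^{i+1}[y]$'') rather than via the tensor decomposition of the graded pieces.
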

\begin{proof}

First, suppose $R$ is a finite artinian local ring with maximal ideal $\mm$ and finite residue field $\F$. Let $\ell$ be the order of nilpotence of $\mm$ (that is, $\ell$ is the least positive integer so that $\mm^{\ell} = 0$).

Let $T: R[y] \to R[y]$ be the operator in the statement of the theorem, and write ${\overline T}: \F[y] \to \F[y]$ for the operator obtained by tensoring with with the quotient map $R \onto \F$. Theorem \ref{ngtthm} for $\F$ guarantees that $N_{\overline T}(y^n) \ll n^{\alpha}$ for some $\alpha < 1$. Let $$g(n): = \max_{n' \leq n} \{N_{\overline T}(y^{n'}) + 1\} \ll n^\alpha,$$ so that $g$ is nondecreasing, integer-valued, and satisfies ${\overline T}^{g(\deg f)} f=0$ for every $f$ in $\F[y]$. Lifting back to $R$, we get that $T^{g(\deg f)} f$ is in $\mm[y]$ for all $f \in R[y]$.\footnote{If $R$ is any ring and $\mathfrak a \subset R$ is an ideal, then ${\mathfrak a}[y] \subset R[y]$ is the ideal of polynomials all of whose coefficients are in ${\mathfrak a}$.} More generally, if $f$ is in $\mm^i[y]$, then $T^{g(\deg f)}$ sends $f$ to $\mm^{i + 1}[y]$. Since $\mm^{\ell} = 0$, we have $T^{\ell \, g(\deg f)} f = 0$ for every $f \in R[y]$, so that $N_T(y^n) \leq \ell\, g(n) - 1 \ll n^\alpha$.  

In the general case, $R$ is a finite product of finite artinian local rings $R_i$, and an $R$-linear operator $T: R[y] \to R[y]$ decomposes as $\sum T_i$ where $T_i: R_i[y] \to R_i[y]$ is the ($R_i$-linear) restriction of $T$ to $R_i$. From the paragraph above, we can choose $\alpha_i < 1$ so that $N_{T_i}(y^n) \ll n^{\alpha_i}$ for all $n$. Then $\alpha = \max_i \{\alpha_i\}$ works for $T$. 
\end{proof}

%%%%%%%%%%%%%%%%%%%%%%%%%%%%%%%%%%%%%%%%%%%%%%%
\subsection{Reduction to the empty-middle NGT}\label{specialngtsec}
%%%%%%%%%%%%%%%%%%%%%%%%%%%%%%%%%%%%%%%%%%%%%%%

From now on, we fix a prime $p$. Theorem $1$ over a finite field of characteristic $p$ is implied by the following special case in which the shape of the recursion satisfied by $T$ is restricted. However, note that the statement below has no finiteness restrictions on the base ring, and no restriction on the coefficient on $y^d$.

\begin{thm}[Empty-middle NGT]\label{specialngtthm}
Let $R$ be a ring of characteristic $p$, and suppose that $T$ is a degree-lowering linear operator on $R[y]$ so that the sequence $\{T(y^n)\}_n$ satisfies a linear recursion whose companion polynomial has the shape $$X^d + a y^d + (\mbox{terms of total degree} \leq d - D)$$ for some $D \geq 1$ and some constant $a \in R$.  
Let $b \geq d$ be a power of $p$, and suppose that either $b - d \leq 1$ or that $D \leq \frac{b}{2}$. Then 
 $$N_T(y^n) \ll n^\alpha \quad \mbox{for } \alpha = \frac{\log (b - D)}{\log b}.$$

\end{thm}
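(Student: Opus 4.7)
The plan is to follow the blueprint of the proof of Theorem \ref{toyngtthm}: construct a \emph{nilgrowth witness}, a function $c = c_{b,D}: \N \to \Q_{\geq 0}$, extended to nonzero $f \in R[y]$ via $\cc(f) := \max\{c(n) : y^n \in \mathrm{supp}(f)\}$ (with $\cc(0) := -\infty$), satisfying two properties: (i) the growth bound $c(n) \ll n^\alpha$ with $\alpha = \log(b-D)/\log b$, and (ii) $\cc(Tf) < \cc(f)$ for every nonzero $f \in R[y]$. Iterating (ii) forces $N_T(y^n) \leq c(n)$, which combined with (i) gives the theorem.

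A natural first candidate is the base-$b$ analogue of the toy content, $c_{b,D}(n) := \sum_i n_i (b-D)^i$ where $n = \sum_i n_i b^i$ is the base-$b$ expansion, for which the direct analogues of Proposition \ref{babycontent} are easy to verify: the growth bound, the scaling $c(b^k n) = (b-D)^k c(n)$, the disjoint-digit additivity $c(b^k N + r) = (b-D)^k c(N) + c(r)$ when $r < b^k$, and the borrow-sensitive difference $c(N) - c(N-i) \in \{i,\, i - D\}$ for $i \leq b$ and $N < b^2$ (the value is $i$ when no borrow occurs in base $b$ and $i - D$ when a single borrow is needed). The paper's announced extension of content to $\Q$ strongly suggests that a rational correction term on top of the base-$b$ weighted digit sum is required for the inductive step to push through in full generality.

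The inductive step mirrors section \ref{toyngtsec}. For large $n$, pick $k \geq 0$ with $db^k \leq n < db^{k+1}$ and apply the order-$db^k$ recursion
\[
T(y^n) = \sum_{i=1}^{d} a_i^{b^k} \, T(y^{n - ib^k}),
\]
obtained from $P^{b^k}$ via Corollary \ref{cor:ppowercharp}. Any $y^m$ in the support of $T(y^n)$ factors as $y^{jb^k} \cdot y^{m - jb^k}$ with $y^{jb^k}$ a monomial of $a_i^{b^k}$ and $y^{m - jb^k}$ in $T(y^{n - ib^k})$; the empty-middle shape of $P$ forces $(i,j)$ into one of three regimes: $i < d$ with $j \leq i - D$; $i = d$ with $j \leq d - D$ (from the lower-order part of $a_d$); or $i = j = d$ (from the leading term $-ay^d$ of $a_d$). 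Strong induction supplies $c(m - jb^k) < c(n - ib^k)$, and the scaling and additivity properties of $c$ reduce the desired $c(m) < c(n)$ to the key content-difference inequality
\[
c(N) - c(N - i) \;\geq\; c(M) - c(M - j)
\]
for $N = \lfloor n/b^k \rfloor \in [d, db)$, $M = \lfloor m/b^k \rfloor \leq N$, and $(i,j)$ in one of the three regimes above.

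The main obstacle is establishing this last inequality in the leading regime $i = j = d$, where both sides take values in $\{d,\, d-D\}$ according to whether the base-$b$ subtraction forces a borrow; when $d < b$ one can arrange $N_0 < d \leq M_0$, producing a borrow on the left but not the right---exactly the wrong direction. This is where the dichotomy $b - d \leq 1$ or $D \leq b/2$ enters: in the first case, $d$ is so close to $b$ (the toy case being $d = b$) that any relevant $M \geq d$ exhibits essentially the same borrow behavior as $N$; in the second case, $b - D \geq D$ provides the slack needed for a suitable rational correction term to $c$ (roughly, subtracting a multiple of the low-order digits) to absorb the borrow mismatch without spoiling the analysis in the other two regimes. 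Pinning down the correct refinement of $c_{b,D}$ and verifying that all analogues of Proposition \ref{babycontent} survive under it will be the technical heart of the argument, presumably filling sections \ref{contsec}--\ref{nilgrowthsec}.
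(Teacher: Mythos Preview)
Your overall architecture is correct and matches the paper's: construct a nilgrowth witness $c$ with $c(n) \ll n^{\log_b(b-D)}$, extend it to $\cc$ on $R[y]$, and prove $\cc(Tf) < \cc(f)$ by strong induction using the order-$db^k$ recursion from $P^{b^k}$ (this is Proposition~\ref{nilgrowthprop}). You have also correctly located the obstruction: with the naive weighted-digit sum $\sum n_i(b-D)^i$, the $(i,j) = (d,d)$ step fails when $d < b$ because subtracting $d$ from a two-digit number base $b$ may or may not borrow, and the borrow patterns for $N$ and $M$ need not match.

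Where the proposal goes astray is in the \emph{form} of the fix. The paper does not add a rational correction term to the integer content; it rescales the \emph{argument}. The witness is
\[
c(n) \;:=\; c_{b,\,b-D}\!\left(\frac{n}{d}\right),
\]
the $(b,\,b-D)$-content of the rational number $n/d$ (denoted $c_{b,\beta}^d$ in section~\ref{nilgrowthsec}). This single move dissolves the $(d,d)$ obstruction: after scaling by $b^{-k}$ one is comparing $c_{b,\beta}(n'/d)$ with $c_{b,\beta}(n'/d - 1)$ for $n'/d \in [1,b)$, and subtracting $1$ from a real in $[1,b)$ never produces a base-$b$ borrow, so the left side of the step inequality is exactly $c_{b,\beta}(1) = 1$, automatically dominating the right side. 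Note also that your floor reduction $N = \lfloor n/b^k \rfloor$ does not survive this change: one must work with the genuine rational $n/b^k$, which is why the content function is extended to all of $\Q_{\geq 0}$ (section~\ref{contsec}).

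The price is that the difficulty migrates to the base property (that $c$ be strictly increasing on $\{0,1,\ldots,d-1\}$ and that $c(d-D) \leq c(d)$) and to the step inequality for $i < d$. Both now amount to lower bounds on $c_{b,\beta}(1/d)$ and $c_{b,\beta}(D/d)$, i.e., to controlling the base-$b$ expansions of the proper fractions $1/d, \ldots, D/d$; this is where the hypothesis ``$b - d \leq 1$ or $D \leq b/2$'' actually enters (Propositions~\ref{cont1lemma} and~\ref{contDlemma}, feeding into Lemmas~\ref{lemma:base} and~\ref{lemma:step}). Your intuition that the dichotomy provides ``slack to absorb the borrow mismatch'' is morally right, but the mechanism is bounding the content of unit fractions from below, not patching the integer digit sum.
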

The case $d = b$ and $D = 1$ has already been established in Theorem \ref{toyngtthm} (in fact, the same argument extends to $d = b$ and any $D$ with $1 \leq D \leq b - 1$).

\begin{prop}[Empty-middle NGT implies NGT]\label{specialngtimpliesngtprop}\mbox{}\\
Theorem \ref{specialngtthm} implies Theorem \ref{ngtthm}.
\end{prop}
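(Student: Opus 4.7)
By Proposition \ref{reducetoFprop}, I may assume $R = \F$ is a finite field of characteristic $p$. The goal is then to enlarge the recursion satisfied by $\{T(y^n)\}_n$ to one whose companion polynomial has empty middle in the sense of Theorem \ref{specialngtthm}. The core idea: the top total-degree part $H$ of $P_T$ factors over $\algebraiclosure{\F}$ into linear forms $X - \lambda_i y$ with $\lambda_i \in \algebraiclosure{\F}^\times$, and in characteristic $p$ all such forms can be absorbed at once into an expression of the form $X^N - y^N$ by Frobenius.

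More precisely: write $P_T = X^d - a_1 X^{d-1} - \cdots - a_d$ and let $H \in \F[y, X]$ be its total-degree-$d$ homogeneous part. Condition (\ref{Trecop}) makes $H$ monic in $X$ of $X$-degree $d$, and condition (\ref{T3}) forces the $y^d$-coefficient of $H$ to be a nonzero constant. Therefore $H = \prod_{i=1}^d(X - \lambda_i y)$ in $\algebraiclosure{\F}[y][X]$ with every $\lambda_i \in \algebraiclosure{\F}^\times$. Since $\algebraiclosure{\F}^\times$ is torsion of order prime to $p$, I can pick $M$ coprime to $p$ with $\lambda_i^M = 1$ for all $i$, and then pick $k \geq 1$ with $p^k \geq d$; set $N := M p^k$. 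Over $\algebraiclosure{\F}[y, X]$,
$$X^N - y^N \;=\; (X^M - y^M)^{p^k} \;=\; \prod_{\zeta^M = 1}(X - \zeta y)^{p^k},$$
so each linear factor of $H$ appears on the right with multiplicity at least $p^k \geq d$. Hence $H$ divides $X^N - y^N$ in $\algebraiclosure{\F}[y][X]$, and because $H$ is monic in $X$ the division descends to $\F[y][X]$. This yields $H' \in \F[y][X]$, monic in $X$ of $X$-degree $N - d$ and homogeneous of total degree $N - d$, with $H \cdot H' = X^N - y^N$.

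Now set $\tilde P := P_T \cdot H' \in \F[y][X]$, monic of $X$-degree $N$. Writing $P_T = H + R$ with $R$ of total degree at most $d - 1$, I get $\tilde P = X^N - y^N + R \cdot H'$, and the term $R \cdot H'$ has total degree at most $(d-1) + (N - d) = N - 1$. Thus $\tilde P$ has empty middle:
$$\tilde P \;=\; X^N + (-1)\, y^N + (\text{terms of total degree} \leq N - 1).$$
By Fact \ref{fact:compolyideal}, the sequence $\{T(y^n)\}_n$ also satisfies the recursion with companion polynomial $\tilde P$. Theorem \ref{specialngtthm} then applies with $d_{\text{new}} = N$, $D = 1$, and $b$ taken as the smallest power of $p$ with $b \geq N$: since $N \geq p \geq 2$, the condition $D \leq b/2$ is automatic. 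The conclusion is $N_T(y^n) \ll n^{\log(b-1)/\log b}$, an exponent strictly less than $1$, proving Theorem \ref{ngtthm}.

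The main obstacle in setting up this plan is precisely the empty-middle reduction, since the hypotheses of Theorem \ref{ngtthm} give us essentially no control over the middle terms of $P_T$. The two key enabling ingredients are (i) characteristic $p$, which lets Frobenius absorb all possibly-repeated linear factors of $H$ into a single $p^k$-th power, and (ii) finiteness of $\F$, which forces every $\lambda_i$ to be torsion of order prime to $p$ so that a common exponent $M$ exists. Relaxing either hypothesis in the NGT would require a genuine strengthening of this reduction step.
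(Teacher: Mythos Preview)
Your proof is correct and follows essentially the same approach as the paper: reduce to a finite field, show that the top homogeneous part $H$ of $P_T$ divides $X^N - y^N$ for some $N$ (using that the roots $\lambda_i \in \algebraiclosure{\F}^\times$ are torsion of order prime to $p$, plus a Frobenius power to absorb multiplicities), and multiply $P_T$ by the cofactor to obtain an empty-middle companion polynomial. The only cosmetic difference is that the paper dehomogenizes to $h(X) = H(1,X)$ and phrases the argument via the splitting field $\F'$ of cardinality $q$ (taking $N = q^m(q-1)$), whereas you work homogeneously throughout and take $N = M p^k$; these are equivalent packagings of the same idea.
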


\vspace{-10pt}

\begin{proof}
By Proposition \ref{reducetoFprop}, we may assume that we are working over a finite field $\F$.  
Let $$P = X^d + a_1 X^{d - 1} + \cdots + a_d \in \F[y][X]$$ be the filtered recursion satisfied by the sequence $\{T(y^n)\}_n$ as in the setup of Theorem \ref{ngtthm}; recall that we insist that $\deg a_d = d$. We will show that $P$ divides a polynomial of the form $$X^{e} - y^e + (\mbox{terms of total degree} < e)$$ for $e = q^m(q-1),$ where $q$ is a power of $p$ and $m \geq 0$. Then the sequence $\{T(y^n)\}_n$ will also satisfy the recursion associated to a polynomial whose shape fits the requirements of Theorem~\ref{specialngtthm}. 

Let $H$ be the degree-$d$ homogeneous part of $P$, so that $P = H + (\mbox{terms of total degree} < d)$. We claim that there exists a homogeneous polynomial $S \in \F[y, X]$ so that $H\cdot S = X^e - y^e$ for some positive integer $e$ of required form. Once we find such an $S$, we know that $P\cdot S$ will have the desired shape  $X^{e} - y^e + (\mbox{terms of total degree} < e)$. 

To find $S$, we dehomogenize the problem by setting $y = 1$: let $h(X) := H(1, X) \in \F[X]$, a monic polynomial of degree $d$ and nonzero constant coefficient.
Let $\F'$ be the splitting field of $h(X)$; under our assumptions on $a_d$, all the roots of $h(X)$ are nonzero. Let $q$ be the cardinality of $\F'$. (Recall that we are assuming that $\F$, and hence its finite extension $\F'$, is a finite field.) Every nonzero element $\alpha \in \F'$, and hence every root of $h(X)$, satisfies $\alpha^{q - 1} = 1$.

Finally, let $q^m$ be a power of $q$ not less than any multiplicity of any root of $h(X)$. Since every root of $h$ satisfies the polynomial $X^{q - 1} - 1$, we know that $h(X)$ divides the polynomial $\big(X^{q - 1} - 1\big)^{q^m} = X^{q^m (q - 1) } - 1$. Set $e = q^m (q - 1)$, and let $s(X)$ be the polynomial  in $\F[X]$ satisfying $h(X) s(X) = X^e - 1$.  

Now we finally ``rehomogenize'' again: if $S \in \F[y, X]$ is the homogenization of $s(X)$, then $Q \cdot S = X^e - y^e$, so that $S$ is the homogeneous scaling factor for $P$ that we seek.  
\end{proof}

%%%%%%%%%%%%%%%%%%%%%%%%%%%%%%%%%%%%%
\subsection{The main induction for the proof the empty-middle NGT}\label{ngtpf}
%%%%%%%%%%%%%%%%%%%%%%%%%%%%%%%%%%%%%

From now on, having already fixed~$p$, we will always assume that $R$ is a ring of characteristic~$p$, not necessarily finite. 

\begin{defn} If $T: R[y] \to R[y]$ an $R$-linear operator, we will call $T$ a \emph{$(d,D)$-NRO}, for \emph{nilpotent recursion operator}, if $T$ satisfies the conditions of Theorem \ref{specialngtthm}: that is, $T$ lowers degrees, and $\{T(y^n)\}$ satisfies an $R[y]$-linear recursion with companion polynomial  $$X^d + a y^d + \mbox{(terms of total degree $\leq d - D$)}$$ for some $d \geq 1$ and some $D \geq 1$. Note that any $(d, D)$-NRO is a $(d, D')$-NRO for any $1 \leq D' \leq D$. 
\end{defn}

The proof of Proposition \ref{specialngtimpliesngtprop} shows that, if $R$ is a finite field, then any $T$ satisfying the conditions of Theorem \ref{ngtthm} is in fact a $(d, D)$-NRO for some $d$ and $D$. 

On the other hand, we make the following definition, for any triple $(b, d, D)$ with $b \geq d \geq D \geq 1$: 
\begin{defn}
A function $c: \Q_{\geq 0} \to \Q_{\geq 0}$ is a \emph{$(b, d, D)$-nilgrowth witness} if it satisfies the following properties:   
\begin{enumerate}
\item{\bf Discreteness:} $c(\N)$ is contained in a lattice of $\Q$ (that is, $\exists M \in \N$ with $M c(\N) \subset \N$).  
\item{\bf Growth:} $c(n) \asymp n^\frac{\log (b - D)}{\log b}$ as $n \to \infty.$
\item{\bf Base property:} $0 = c(0) < c(1) < \cdots < c(d-1)$ and $c(d - D) < c(d)$.
\item{\bf Step property:} For any $k \geq 0$, and any pair $(i, j) \in \{0,1,\ldots, d\}^2$ with either $(i, j) = (d, d)$ or $i - j\geq D$, and any integers $n, m$ satisfying $db^{k} \leq n < db^{k +1}$ and $jb^k \leq m$ we have 
 $$c(n) - c(n - i b^k) \geq c(m) - c(m - j b^k).$$
\end{enumerate}
\end{defn}

In this section, we prove, using strong induction, that if $b$ is a power of $p$, then the growth of the nilpotence index of a $(d, D)$-NRO is bounded by the growth of a $(b, d, D)$-nilgrowth witness:
\begin{prop}\label{nilgrowthprop}
Let $T$ be a $(d, D)$-NRO, and $b$ a power of $p$ not less than $d$. If $c$ is a $(b, d, D)$-witness, then $N_T(y^n) \leq c(n)$. 
\end{prop}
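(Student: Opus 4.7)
Extend the nilgrowth witness $c$ to polynomials in $R[y]$ via the degree by setting $c(f) := \max\{c(n) : [y^n]f \neq 0\}$ for $f \neq 0$ and $c(0) := -\infty$. The plan is to prove, by strong induction on $n \in \N$, the pointwise claim $c(T(y^n)) < c(n)$. Once this is in hand, the discreteness axiom $c(\N) \subset \tfrac{1}{M}\Z$ forces iterated applications of $T$ to descend strictly through $c(\N)$ in steps of size at least $1/M$, yielding the desired bound $N_T(y^n) \leq c(n)$ (absorbing the lattice constant $M$ into the bound; directly when $M = 1$).

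For the base case $n < d$, the operator $T$ being degree-lowering means every monomial $y^m$ of $T(y^n)$ satisfies $m < n$, and the base property $c(0) < c(1) < \cdots < c(d-1)$ immediately gives $c(m) < c(n)$. For the inductive step, fix $n \geq d$ and choose the unique $k \geq 0$ with $db^k \leq n < db^{k+1}$. Since $b$ is a power of $p$, Corollary~\ref{cor:ppowercharp} replaces the defining order-$d$ recursion by the order-$db^k$ recursion whose companion polynomial is $P^{b^k}$. Because Frobenius distributes over sums in characteristic~$p$,
$$P^{b^k} \;=\; X^{db^k} + a^{b^k}\,y^{db^k} + \sum_{(i,j) \in S} c_{ij}^{b^k}\, y^{jb^k}\,X^{(d-i)b^k},$$
where $S$ indexes the original total-degree-$\leq (d-D)$ monomials of $P$ and hence obeys $i - j \geq D$ and $1 \leq i \leq d$.

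Unwinding exhibits $T(y^n)$ as a linear combination of terms of the form $y^{jb^k}\,T(y^{n-ib^k})$ with $(i,j) = (d,d)$ or $i - j \geq D$, all with $i \geq 1$. A monomial $y^m$ of $T(y^n)$ must appear in at least one such summand, so $y^{m-jb^k}$ appears in $T(y^{n-ib^k})$, forcing $m \geq jb^k$; the inequality $n - ib^k \geq 0$ holds automatically since $n \geq db^k$ and $i \leq d$. If $n = ib^k$ then $T(y^0) = 0$ by degree-lowering and the term contributes nothing; otherwise $n - ib^k < n$, the induction hypothesis yields $c(m - jb^k) < c(n - ib^k)$, and the tuple $(k, i, j, n, m)$ satisfies the hypotheses of the step property verbatim, delivering
$$c(n) - c(n - ib^k) \;\geq\; c(m) - c(m - jb^k).$$
Adding the two inequalities gives $c(m) < c(n)$; taking the maximum over all monomials $y^m$ in $T(y^n)$ completes the inductive step.

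The key design feature is the Frobenius-powered recursion: promoting $P$ to $P^{b^k}$ scales the ``jumps'' $ib^k$ and $jb^k$ to be commensurate with $n$ itself, so that subtracting them affects only the high-order base-$b$ digits of $n$ and $m$ while leaving the low-order digits untouched. The step property is engineered precisely to exploit this alignment, which is why the induction goes through cleanly. The genuinely hard part of proving the empty-middle NGT is not this induction but the construction of a function $c$ that actually satisfies all four nilgrowth-witness axioms (especially the step property, which must hold uniformly across all admissible $(i,j,n,m)$); that construction is the task of sections~\ref{contsec}--\ref{nilgrowthsec}.
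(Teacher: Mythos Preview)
Your proof is correct and follows essentially the same approach as the paper's: extend $c$ to polynomials, prove $c(T(y^n)) < c(n)$ by strong induction, use the base property for $n < d$, and for $n \geq d$ apply the Frobenius-powered recursion $P^{b^k}$ together with the step property. You are slightly more careful than the paper in explicitly handling the edge case $n - ib^k = 0$ and in flagging the discreteness constant $M$ (the paper's stated bound $N_T(y^n) \leq c(n)$ tacitly assumes $M = 1$; in general one gets $N_T(y^n) \leq M\,c(n)$, which is all the corollary needs).
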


Using the Growth Property above, we obtain an immediate corollary:

\begin{cor}\label{cor:nilgrowthinduction}
Suppose that $T$ is a $(d, D)$-NRO, and let $b = p^{\lceil \log_p d \rceil}$. If there exists a $(b, d, D)$-witness, then $N_T(y^n) \ll n^\frac{\log(b - D)}{\log{b}}$. 
\end{cor}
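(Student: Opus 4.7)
The plan is to replay, almost verbatim, the strategy of the toy case (Theorem \ref{toyngtthm}): the four axioms in the definition of a nilgrowth witness are tailored to be exactly the combinatorial facts needed to make the toy argument go through in the general empty-middle setting. First I would extend $c$ to all of $R[y]$ by setting $\tilde{c}(f) := \max\{c(n) : y^n \text{ appears with nonzero coefficient in } f\}$ for $f \neq 0$, with $\tilde{c}(0) := -\infty$, and then prove by strong induction on $n$ the strict content-lowering statement
\[
\tilde{c}(T(y^n)) < c(n) \qquad \text{for every } n \in \N.
\]
Since the Discreteness Property forces nonzero values of $\tilde{c}$ into a discrete subset of $\Q_{\geq 0}$, strict decrease immediately translates into the nilpotence bound $N_T(y^n) \leq c(n)$ (up to the lattice constant $M$, absorbable by rescaling).

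For the base case $0 \leq n < d$, since $T$ is degree-lowering every monomial $y^m$ in $T(y^n)$ has $m < n < d$, and the Base Property supplies $\tilde{c}(T(y^n)) \leq c(n-1) < c(n)$ (with the convention that the right side is $-\infty$ when $n = 0$).

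For $n \geq d$ I would choose the unique $k \geq 0$ with $db^k \leq n < db^{k+1}$ and then invoke Corollary \ref{cor:ppowercharp}: since $b$ is a power of $p$ and $R$ has characteristic $p$, the sequence $\{T(y^n)\}$ satisfies the order-$db^k$ recursion with companion polynomial $P^{b^k}$, and Frobenius collapses this power to
\[
P^{b^k} = X^{db^k} + a^{b^k} y^{db^k} + (\text{terms of total degree} \leq (d-D)b^k).
\]
Consequently the recursion writes $T(y^n)$ as a sum of terms $(\text{scalar}) \cdot y^{jb^k}\, T(y^{n-ib^k})$ in which every index pair $(i,j) \in \{0,\ldots,d\}^2$ satisfies either $(i,j) = (d,d)$ (from the corner monomial $a y^d$ of $P$) or $i \geq 1$ and $i - j \geq D$ (a middle monomial $X^{i_\ell} y^{j_\ell}$ of $P$ with $i_\ell + j_\ell \leq d - D$ gives shift $i = d - i_\ell$ and $y$-power $j = j_\ell$, whence $i - j = d - i_\ell - j_\ell \geq D$). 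Now fix any $y^m$ appearing in $T(y^n)$: it arises from a summand $y^{jb^k} \cdot y^{m - jb^k}$ with $y^{m - jb^k}$ appearing in $T(y^{n - ib^k})$. Strong induction applied to the strictly smaller index $n - ib^k$ gives $c(m - jb^k) < c(n - ib^k)$, and the Step Property at $(n,m,i,j,k)$ --- whose two hypotheses $db^k \leq n < db^{k+1}$ and $m \geq jb^k$ are both in force --- delivers the compensating inequality $c(n) - c(n - ib^k) \geq c(m) - c(m - jb^k)$. Combining these two yields $c(m) < c(n)$, closing the induction.

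The principal obstacle I anticipate is simply the bookkeeping in the previous paragraph: verifying that the empty-middle shape of $P^{b^k}$ produces exactly the dichotomy $(i,j)=(d,d)$ or $i - j \geq D$ demanded by the Step Property. Once this translation is pinned down the argument becomes a formal consequence of the four witness axioms, and all the genuine combinatorial difficulty is deferred to the explicit construction of an actual witness carried out in Sections~\ref{contsec}--\ref{nilgrowthsec}.
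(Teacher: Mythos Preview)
Your proposal is correct and follows essentially the same route as the paper: what you have written is precisely the proof of Proposition~\ref{nilgrowthprop} (the strong-induction argument using the $P^{b^k}$ recursion, the Base Property for $n<d$, and the Step Property for the inductive step), after which the corollary itself is just the one-line application of the Growth Property $c(n)\asymp n^{\log(b-D)/\log b}$ to the bound $N_T(y^n)\leq Mc(n)$ that you obtained. The only thing to add is that final invocation of the Growth axiom, which you set up but did not state explicitly.
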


In other words, in this section we reduce the proof of Theorem \ref{specialngtthm} to establishing the existence of a $(p^{\lceil \log_p d \rceil}, d, D)$-witness for the $(d, D)$-NRO $T$, provided that $D$ is not too big. 

\begin{proof}[Proof of Proposition \ref{nilgrowthprop}]
Given a $(b, d, D)$-witness $c$, we define a new function $\cc : R[y] \to \N \cup \{-\infty\}$ via $$\cc\big(\sum a_n y^n\big) :=  \max\{c(n): a_n \neq 0\} \qquad \mbox{and } \cc(0) := -\infty.$$

We will show that $T$ lowers the $\cc$-value of polynomials in $R[y]$: that is, that for any nonzero $f \in R[y]$, we have $\cc\big(T(f)\big) < \cc(f)$. 

It suffices to show this for $f = y^n$. 

Write $x_n$ for $T(y^n)$. We will use strong induction to show that $\cc(x_n) < c(n)$. 

The base case is all $n$ with $0 \leq n < d$. Since $\deg x_n < n$, the statement $\cc(x_n) < c(n)$ for $n < d$ is implied by the statement that $c$ is strictly increasing on $\{0, 1, \ldots, d-1\}$. This is the base property above.  

For $n > d $, let $k \geq 0$ be the integer so that $d \cdot b^{k} \leq n < d \cdot b^{k + 1}$.  Let $P(X) \in \F[y][X]$ be the companion polynomial of the given recursion satisfied by the sequence $\{x_n\}$. Let $$\mathcal I  := \{(i, j) : 0 \leq j < j + D \leq i \leq d\} \cup \{(d,d)\}.$$   By assumption, $P$ has the form  

$$P = X^d +  \sum_{(i, j) \in \mathcal I} a_{i, j} y^j X^{d - i}$$
 for some $a_{i, j} \in R$.  
By Corollary \ref{cor:ppowercharp}, the sequence $\{x_n\}$ also satisfies the order-$db^k$ recursion corresponding to $P^{b^k}$: namely, for all $n \geq d b^k$, we have

$$x_n = - \sum_{(i, j) \in \mathcal I} a_{i, j} y^{j b^k} x_{n - i b^k}.$$

We will show that, if $y^m$ appears with nonzero coefficient in one of the terms on the right-hand side above, then $c(m) < c(n)$. Since $\cc(x_n)$ is equal to one of these $c(m)$s, this will imply our claim. 
So suppose that $y^m$ appears with nonzero coefficient in the $(i, j)$-term on the right-hand side. That is, $y^m$ appears in $y^{ j b ^k} x_{n - i b^k}$ for some $(i, j) \in \mathcal I$. That means that $y^{m - j b ^k}$ appears in $x_{n - i b^k}$. Note that $i \geq D$, so that $n - i b^k < n$, and the induction assumption applies: since $y^{m - j b ^k}$ appears in $x_{n - i b^k}$, we can assume that $c(m - j b^k) < c(n - i b^k)$. 

To show that $c(m) < c(n)$, it therefore suffices to show that $$c(n) - c(m) \stackrel{?}\geq c(n - i b^k) - c(m - j b^k),$$ 
since the latter is assumed to be strictly positive. But this, slightly rearranged, is just the step property from the definition of a $(b, d, D)$-witness above. 
\end{proof}

We now aim to construct a $(b, d, D)$-witness if $b-d \leq 1$ or if $D \leq \frac{b}{2}$. This will occupy the next three sections. In section \ref{contsec} we investigate the properties of a \emph{content} function, which writes numbers in one base and reads them in another. In section \ref{sec:ineqs}, we establish some inequalities about the content of proper fractions. In section \ref{nilgrowthsec}, we use the content function to construct a $(b, d, D)$-nilgrowth witness, completing the proof of Theorem \ref{specialngtthm}.

%%%%%%%%%%%%%%%%%%%%%%%%%%%%%%%%%%%%%%%%%%%%%%%
\section{The content function and its properties}\label{contsec}
%%%%%%%%%%%%%%%%%%%%%%%%%%%%%%%%%%%%%%%%%%%%%%%

In this section we will introduce a function $c: \Q_{\geq 0} \to \Q_{\geq 0}$ that will serve as a nilgrowth witnesses in the proof of Theorem \ref{specialngtthm}. This type of function was first introduced by \jb\ in the appendix to \cite{BK}, inspired in a very loose way by the Nicolas-Serre code in \cite{NS1}. 

\subsection{Base-$b$ representation of numbers} \label{contsub}
We fix an integer $b \geq 2$ to be the \emph{base}. 

Let $\mathcal D(b) = \{0, \ldots, b-1\}$ be the alphabet of digits base $b$, and $\mathcal D(b)^\ast$ the set of words on $\mathcal D(b)$, including the empty word $\epsilon$. The number-of-letters function for a word $x \in \mathcal D(b)^\ast$ will be denoted by $\ell(x)$, for \emph{length}. 

Let $\mathcal R(b)$ be the set of all bi-infinite pointed words $$\underline x = \ldots x_2 x_1 x_0.x_{-1} x_{-2} \ldots$$ on $\mathcal D(b)$ that start with ${}^\infty 0$ (the digit $0$ repeated infinitely to the left). The set of finite words $\mathcal D(b)^\ast$ naturally embeds into $\mathcal R(b)$ via $x \mapsto ({}^\infty 0) x. (0^\infty)$, where $0^\infty$ is the digit $0$ repeated infinitely to the right. More generally, any pointed right-infinite word will be viewed as an element of $\mathcal R(b)$ by appending ${}^\infty 0$ on the left. For $\underline x \in \mathcal R(b)$, we can define the real number $\reed_b(\underline x) \in \R_{\geq 0}$ by reading it as a sequence of digits base $b$, via $\reed_b(\underline x) :=  \sum_i x_i b^i.$ Since $x_i = 0$ for $i \gg 0$, this sum converges.  The map $\reed_b$ is not injective: indeed, $\sum_{i < k} (b-1) b^i = b^k$, so that that for any finite word $w$ and digit $x \neq  b-1$ (and any radix point placement), we have $\reed_b(w\, x \,(b-1)^\infty) = \reed_b(w \, (x + 1) \,0^\infty)$. But we can choose a section of $\reed_b$ by restricting the domain: let $\mathcal R'(b) \subset \mathcal R(b)$ the subset of those that do not end with $(b-1)^\infty$. Then the reading-base-$b$ function $\reed_b: \mathcal R'(b) \to \R_{\geq 0}$ is a bijection, and the inverse map $\rite_b: \R_{\geq 0} \to \mathcal R'(b)$ takes a nonnegative real number $q$ to its \emph{normal} (that is, not ending in $(b-1)^\infty$) base-$b$ representation $\underline x = \rite_b(q)$ satisfying $\reed_b(\underline x) = q$.

The base-$b$ representation $\rite_b(q)$ is eventually periodic (that is, ends with $z^\infty$ for some finite word $z$) if and only if $q \in \Q_{\geq 0}$. For $q \in \Q_{\geq 0}$, then, we know that $$\rite_b(q) =  x.y z^\infty,$$
where $x, y, z$ are in $\mathcal D(b)$. If we insist that $x$ does not start with $0$, that first $y$ and then $z$ have minimal length among such representations, and finally that $z \neq (b-1)$, then $x$, $y$, and $z$ are defined uniquely. We will assume this minimality from now on. Note that by construction $x$ and $y$ may be empty words, but $z$ has length at least $1$. 

We define, then, three constants associated associated to $q \in \Q_{\geq 0}$: 
\begin{align*}
\ell(q) &= \ell_b(q) := \ell(x) = \max\{0, \lfloor \log_b q \rfloor + 1\},\\
s(q) &= s_b(q) := \ell(y)  = \min\{k \geq 0: \mbox{denominator of } b^k q \mbox{ is prime to } b\},\\
t(q) &= t_b(q) := \ell(z)  = \min\{k \geq 1: \mbox{denominator of } b^{s(q)} q \mbox{ divides } b^k - 1\}.
\end{align*}
In particular, we know that, for $q \in \Q_{\geq 0}$, we have 
\begin{equation}\label{eq:ratq} 
q = n + \frac{u}{b^{s(q)}} + \frac{m}{b^{s(q) }(b^{t(q)} - 1)},
\end{equation} where $n$, $u$, and $m$ are all integers with $n = \lfloor q \rfloor = \reed_b(x)$, $u = \reed_b(y)$, and $m = \reed_b(z)$. 

We will need the following very simple lemma. 
\begin{lemma}\label{integralitylemma}
Given $q \in \Q_{\geq 0}$ and a base $b$, if $q' \in q\N$, then 

\begin{enumerate}
\item $ s_b(q') \leq s_b(q),$
\item$ t_b(q')  \mbox{ divides } t_b(q).$
\end{enumerate}
\end{lemma}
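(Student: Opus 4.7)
The plan is to reinterpret $s_b$ and $t_b$ structurally in terms of the denominator of $q$ in lowest terms, and then to observe that multiplying $q$ by a nonnegative integer can only shrink that denominator.

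First, I would write any positive $q \in \Q_{\geq 0}$ as $q = p/D$ in lowest terms and factor $D = b^\alpha e$ with $\gcd(e,b) = 1$. Directly from the definitions of $s_b$ and $t_b$ as the minimal shifts required to clear the $b$-part of the denominator and then to make the residual denominator divide $b^k - 1$, one verifies that $s_b(q) = \alpha$ (the $b$-adic valuation of $D$) and $t_b(q) = \ord_e(b)$ (the multiplicative order of $b$ modulo $e$). Indeed, $b^k q$ has denominator prime to $b$ iff $k \geq \alpha$, and once $k = \alpha$ the residual denominator is exactly $e$, which divides $b^t - 1$ iff $\ord_e(b) \mid t$. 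The case $q = 0$ can be handled by convention and the case $q' = 0$ is trivially compatible with both conclusions.

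Next, for $q' = nq$ with $n \in \N$, the formula $q' = np/D$ and cancellation by $g = \gcd(n, D)$ show that the reduced denominator $D' = D/g$ of $q'$ divides $D$. Writing $D' = b^{\alpha'} e'$ with $\gcd(e',b) = 1$, the divisibility $D' \mid D$ forces both $\alpha' \leq \alpha$ and $e' \mid e$.

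Finally, $\alpha' \leq \alpha$ is exactly part (1). For part (2), the divisibility $e' \mid e$ implies that $b^{t_b(q)} \equiv 1 \pmod{e'}$, so the order $\ord_{e'}(b) = t_b(q')$ divides $t_b(q)$. There is no serious obstacle: the content of the lemma is essentially the observation that the reduced denominator is monotone under integer scaling, combined with the monotonicity of both $s_b$ and $t_b$ under passing to a divisor of this denominator. The only mildly subtle point is verifying the first paragraph's characterization of $s_b$ and $t_b$; once that is in place, the rest is a one-line consequence of $D' \mid D$.
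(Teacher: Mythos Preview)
Your approach matches the paper's one-line proof, which simply observes that the reduced denominator of $q'$ divides that of $q$. One small correction: for composite $b$, the factorization $D = b^\alpha e$ with $\gcd(e,b)=1$ need not exist (take $b=4$, $D=2$), so your identification ``$s_b(q)=\alpha$, the $b$-adic valuation of $D$'' is not literally valid. The correct invariants are $s_b(q) = \max_{p\mid b}\lceil v_p(D)/v_p(b)\rceil$ and $e = \prod_{p\nmid b} p^{v_p(D)}$ (the $b$-coprime part of $D$), with $t_b(q)=\ord_e(b)$ as you say. With these in hand, $D'\mid D$ immediately gives $s_b(q')\le s_b(q)$ and $e'\mid e$, hence $t_b(q')\mid t_b(q)$, exactly as you argue.
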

The proof follows from the fact that the denominator of $q'$ is a divisor of the denominator of $q$. Alternatively, one can consider the effect of multiplication by integers on base-$b$ expansions.

\subsection{The content function}\label{contentdefsec}
Now let $b, \beta \geq 2$ be bases. Define the \emph{$(b, \beta)$-content} of $q \in \R_{\geq 0}$ (we will soon restrict to rational $q$) to be the result of reading the base-$b$ representation of $q$ in base $\beta$: 
$$c_{b, \beta}(q) := \reed_\beta\big( \rite_b(q) \big).$$

Note that $\reed_\beta$ makes sense as a function $\mathcal R(b) \to \R_{\geq 0}$: the series $\sum_{i \leq k} x_i b^i$ always converges if the $x_i$ are bounded.  

\begin{example}
\begin{enumerate}
\item Since $\rite_5(196) =1241$, we have $c_{5, 3}(196) = 1\cdot3^3 + 2\cdot3^2 + 4\cdot3 + 1 = 58.$
\item We have $\rite_7 (\frac{1}{3}) = 0.(2)^\infty$. Therefore $c_{7, 5}(n) = 2 \sum_{i \geq 1} 5^{-i} = \frac{1}{2}$. 

\item $c_{8, 3}(\frac{1}{6}) = \reed_3(0.1(25)^\infty) = \frac{1}{3} + (\frac{2}{3^2} + \frac{5}{3^3})\sum_{i \geq 0} 3^{-2i} = \frac{1}{3} + \frac{11}{27}\cdot \frac{9}{8} = \frac{19}{24}.$
\end{enumerate}
\end{example}

The following lemma, which will be used frequently, is a direct computation. 
\begin{lemma}\label{lemma:contentcompute}
For $q \in \Q_{\geq 0}$, let $s = s_b(q)$ and $t = t_b(q)$. 
Then if $q = n + \frac{u}{b^{s}} + \frac{m}{b^{s}(b^{t} - 1)}$ as in equation \eqref{eq:ratq} above, we have 
$$c_{b, \beta}(q) = c_{b, \beta}(n) + \frac{ c_{b, \beta}(u)}{\beta^s} + \frac{c_{b, \beta}(m)}{\beta^{s} (\beta^{t} - 1)}.$$
\end{lemma}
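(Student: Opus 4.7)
This is essentially a direct computation unpacking the definition of $c_{b,\beta}$, so the plan is to expand $\reed_\beta(\rite_b(q))$ as a sum of three pieces corresponding to the integer part, the pre-periodic tail, and the periodic tail of the base-$b$ expansion of $q$, and then identify each piece with the claimed term.

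First I would record the base-$b$ expansion of $q$ explicitly. Write $\rite_b(q) = x.y\, z^\infty$, where $x$, $y$, $z$ are the base-$b$ words of the integers $n = \lfloor q\rfloor$, $u$, $m$, padded on the left with zeros so that $\ell(y) = s$ and $\ell(z) = t$ (the minimality conditions in the definitions of $s$ and $t$ guarantee this decomposition exists and that the geometric series below indeed represents $q$). Now apply $\reed_\beta$; since $\reed_\beta(\underline x) = \sum_i x_i\beta^i$ and the digits of $x$, $y$, $z$ are bounded by $b-1$, the series converges absolutely and we may regroup:
\begin{equation*}
\reed_\beta(x.y\,z^\infty) \;=\; \reed_\beta(x) \;+\; \beta^{-s}\reed_\beta(y) \;+\; \beta^{-s}\sum_{k\geq 1}\beta^{-kt}\reed_\beta(z).
\end{equation*}
Summing the geometric series $\sum_{k\geq 1}\beta^{-kt} = \frac{1}{\beta^t-1}$ turns the last summand into $\frac{\reed_\beta(z)}{\beta^s(\beta^t-1)}$.

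Second, I would observe that $\reed_\beta$ is invariant under padding a word with leading zeros. Hence $\reed_\beta(x) = \reed_\beta(\rite_b(n)) = c_{b,\beta}(n)$, and similarly $\reed_\beta(y) = c_{b,\beta}(u)$ and $\reed_\beta(z) = c_{b,\beta}(m)$ (even if, say, $y$ or $z$ begins with a $0$, which happens exactly when the minimal representations of $u$ or $m$ are shorter than $s$ or $t$). Combining these identifications with the three-term decomposition above gives the claimed formula.

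There is no real obstacle; the only thing to be careful about is the leading-zero bookkeeping between the fixed-length words $y, z$ (of lengths $s, t$) and the minimal-length representations $\rite_b(u), \rite_b(m)$. Once one notes that $\reed_\beta$ ignores leading zeros this disappears. One also should verify convergence of the double sum implicit in $\reed_\beta$ on $\mathcal{R}(b)$, which follows immediately from the uniform bound $x_i \leq b-1$ on the digits (the very convergence statement already noted in the paragraph defining $\reed_\beta$ on $\mathcal{R}(b)$).
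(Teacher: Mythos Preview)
Your proposal is correct and is precisely the direct computation the paper has in mind; the paper itself offers no proof beyond the remark that the lemma ``is a direct computation.'' Your only care point---that $\reed_\beta$ is insensitive to leading zeros, so the fixed-length words $y$ and $z$ give the same value as $\rite_b(u)$ and $\rite_b(m)$---is exactly the right thing to flag.
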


We will also use the following growth estimate: 

\begin{lemma}\label{contentgrowthlemma}
We have $c_{b, \beta}(n) \asymp n^{\log_b \beta}.$ More precisely, for $n \geq 1$, we have

$$ \beta^{-1} n^{\log_b \beta} \quad < \quad c_{b, \beta}(n) \quad < \quad \frac{\beta (b-1)}{\beta - 1}  n^{\log_b \beta}.$$

\end{lemma}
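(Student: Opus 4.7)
The plan is to reduce the bound to a direct computation on the base-$b$ expansion of $n$. For $n \geq 1$, let $\ell = \ell_b(n)$, so that $n = \sum_{i=0}^{\ell-1} n_i b^i$ with digits $n_i \in \{0,\ldots,b-1\}$ and leading digit $n_{\ell-1} \geq 1$. By definition, $c_{b,\beta}(n) = \sum_{i=0}^{\ell-1} n_i \beta^i$. Both bounds then reduce to tracking this finite sum against the length $\ell$, and relating $\ell$ to $\log_b n$ via $b^{\ell - 1} \leq n < b^\ell$.

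For the upper bound, I would estimate the finite sum by its worst-case digits $(b-1)$ and apply the geometric series formula to obtain $c_{b,\beta}(n) \leq (b-1)\frac{\beta^\ell - 1}{\beta - 1} < \frac{b-1}{\beta - 1}\beta^\ell$. The bound $n \geq b^{\ell - 1}$ gives $\ell \leq \log_b n + 1$, so $\beta^\ell \leq \beta \cdot n^{\log_b \beta}$, which yields the upper estimate.

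For the lower bound, I would retain only the leading term: since $n_{\ell - 1} \geq 1$, we have $c_{b,\beta}(n) \geq \beta^{\ell - 1}$. From $n < b^\ell$ we get $\beta^\ell > n^{\log_b \beta}$, hence $\beta^{\ell - 1} > \beta^{-1} n^{\log_b \beta}$, giving the lower estimate.

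No step looks hard; the lemma is really just a careful bookkeeping of the relation $\beta^{\log_b n} = n^{\log_b \beta}$ with the standard two-sided control of $\ell_b(n)$ by $\log_b n$. The only thing to watch is that the strict inequalities hold cleanly for all $n \geq 1$, including the small case $n < b$ where $\ell = 1$; there both inequalities are immediate from $\beta^{-1} n^{\log_b \beta} \leq \beta^{-1}(b-1)^{\log_b \beta} < 1 \leq c_{b,\beta}(n) = n < b-1 < \frac{\beta(b-1)}{\beta - 1}n^{\log_b \beta}$, so no separate case is really needed.
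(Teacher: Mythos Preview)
Your argument is correct and essentially identical to the paper's: both bound $c_{b,\beta}(n)$ between $\beta^{\ell-1}$ and $\frac{b-1}{\beta-1}\beta^\ell$ and then convert $\ell$ to $\log_b n$ via $b^{\ell-1}\le n<b^\ell$. The only cosmetic difference is that the paper obtains the strict upper inequality by comparing to the infinite word $(b-1)^\ell.(b-1)^\infty$, whereas you get strictness by dropping the $-1$ in $(b-1)\frac{\beta^\ell-1}{\beta-1}$; your closing small-case check is unnecessary (as you note) and contains a harmless slip at $n=b-1$.
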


\begin{proof}
Let $\ell = \ell_b(n)$, so that for $n \geq 1$ we have $\ell = 1 + \lfloor \log_b n  \rfloor$, or $\log_b n < \ell \leq 1 + \log_b n$. Then, on one hand, the $(b, \beta)$-content of $n$ is bounded above by $\pi_\beta$ of the infinite pointed word $(b-1)^\ell.(b-1)^\infty$:
$$c_{b, \beta}(n) < \sum_{k < \ell}  (b-1) \beta^k = \frac{b-1}{\beta - 1} \beta^\ell 
\leq \frac{\beta (b-1)}{\beta - 1} \beta^{\log_b n}  
= \frac{\beta (b-1)}{\beta - 1} n^{\log_b \beta}.$$
On the other hand, the $(b, \beta)$-content of $n$ is at least $\pi_\beta$ of the pointed word $1(0^{\ell - 1}).0^\infty$: 
$$c_{b, \beta}(n) \geq \beta^{\ell - 1} > \beta^{-1} n^{\log_b \beta}.$$
\end{proof}
In particular, if $\beta < b$, then $c_{b, \beta}$ grows slower than linearly in $n$. 
\begin{rk}
\begin{enumerate}
\item 
If $\beta < b$, then the $(b, \beta)$-content function is never monotonically increasing, even on integers. Indeed, $c_{b, \beta}(b^k-1) = \frac{b-1}{\beta - 1}(\beta^k - 1)$, which is greater than $c_{b, \beta}(b^k) = \beta^k$ as soon as $\beta^k > \frac{\beta - 1}{b - 2}$. 
\item 
The sequence $\{ c_{b, \beta}(n) \}_{n \in \N}$ is $b$-regular in the sense of Allouche and Shallit \cite{alloucheshallit}. They define a sequence $\{s_n\} \in \Q^\N$ to be $b$-regular if there exists a $\Q$-vector space $V$, endomorphisms $M_0, \ldots, M_{b-1}$ of $V$, a vector $v \in V$, and a functional $\lambda: V \to \Q$ so that $s_{\pi_b(n_\ell \ldots n_0)} = \lambda M_{n_\ell} \cdots M_{n_0} v$. For $\{c_{b, \beta}(n)\}$, we can take $V = \Q^2$, $M_i = \begin{psmallmatrix} 1 & i \\ 0 & \beta \end{psmallmatrix}$, $v = \begin{psmallmatrix} 0 \\ 1 \end{psmallmatrix}$ and $\lambda = \begin{psmallmatrix} 1 & 0 \end{psmallmatrix}$. Indeed, $b$-regularity appears to be lurking in many places in this theory\footnote{For example, the Nicolas-Serre code sequence $h(n)$ defined in \cite[\S 4.1]{NS1} is $2$-regular, as are its constituent parts $n_3(n)$ and $n_5(n)$.}, but we have not yet been able to make use of it.  
\end{enumerate}
\end{rk}

Finally, we record a trivial property of $c_{b, \beta}$: 
\begin{lemma}\label{lemma:scalingcontent}
For $n \in \R_{\geq 0}$, and any $k \in \Z$, we have $c_{b, \beta}(b^k n) = \beta^k c_{b, \beta}(n)$. 
\end{lemma}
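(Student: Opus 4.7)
The plan is straightforward: multiplication by $b^k$ is precisely a shift of the radix point in the base-$b$ representation, and the reading-in-base-$\beta$ map turns such a shift into multiplication by $\beta^k$. I will spell this out while being mindful of the uniqueness/normality conventions on $\rite_b$.

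First I would handle the trivial case $n=0$, where both sides equal $0$. For $n>0$, let $\underline x = \rite_b(n) \in \mathcal R'(b)$, so $\underline x = \ldots x_2 x_1 x_0 . x_{-1} x_{-2} \ldots$ with only finitely many nonzero digits to the left of the radix point, with no tail $(b-1)^\infty$, and satisfying $\reed_b(\underline x) = n$. Define $\underline x'$ by $x'_i := x_{i-k}$ for all $i\in\Z$; this is again a bi-infinite word in $\mathcal D(b)$ that starts with ${}^\infty 0$ on the left and does not end in $(b-1)^\infty$, i.e. $\underline x'\in \mathcal R'(b)$. Moreover
\[
\reed_b(\underline x')=\sum_i x'_i b^i =\sum_i x_{i-k} b^i = b^k \sum_j x_j b^j = b^k \reed_b(\underline x) = b^k n.
\]
By the bijectivity of $\reed_b\colon \mathcal R'(b)\to \R_{\geq 0}$ (and the fact that $\rite_b$ is its inverse), this forces $\rite_b(b^k n) = \underline x'$.

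Now apply $\reed_\beta$ to both words: the same computation, but in base $\beta$, gives
\[
c_{b,\beta}(b^k n) = \reed_\beta(\underline x') = \sum_i x_{i-k}\beta^i = \beta^k \sum_j x_j \beta^j = \beta^k \reed_\beta(\underline x) = \beta^k c_{b,\beta}(n),
\]
which is the claimed identity. The only thing to double-check is that $\reed_\beta$ converges on both $\underline x$ and $\underline x'$; but these bi-infinite words have only finitely many nonzero digits to the left of the radix point and digits uniformly bounded by $b-1$ to the right, so the tail sums $\sum_{i<0} x_i \beta^i$ and $\sum_{i<0} x'_i\beta^i$ converge (geometrically).

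The only place where one might worry is the normality condition: the shifted word $\underline x'$ must still lie in $\mathcal R'(b)$ so that the identification $\rite_b(b^k n)=\underline x'$ is legitimate. But shifting preserves the property of not terminating in $(b-1)^\infty$, so there is no issue. Thus there is really no obstacle here — the lemma is a formal consequence of the observation that in any positional numeral system, multiplication of a quantity by a power of the base acts on its digit string as a shift, and $\reed_\beta$ converts shifts into multiplication by the corresponding power of $\beta$.
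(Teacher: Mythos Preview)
Your proof is correct and is precisely the natural argument: multiplication by $b^k$ shifts the base-$b$ digit string, and $\reed_\beta$ turns that shift into multiplication by $\beta^k$. The paper itself records this lemma as a ``trivial property'' without giving a proof, so your write-up simply spells out what the paper leaves implicit.
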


\subsection{Content and the carry-digit word}\label{sec:carryword}
Even though the content function is not monotonic, it exhibits a certain amount of regularity under addition. We quantify this regularity in this section.
 
For $m, n \in \R_{\geq 0}$, we define $r_b(m, n) \in \mathcal R(2)$ to be the word of carry digits when the sum $s = m + n$ is computed in base $b$. More precisely, let $\rite_b(m) = \underline m$, $\rite_b(n) = \underline n$, and $\rite_b(s) = \underline s$, and let $r_b(m, n) : = \underline r$ satisfying 
\begin{equation} \label{eq:carrydigit}
m_i + n_i + r_{i -1} = s_i + b r_i \quad \mbox{ for all $i$ in $\Z$. }
\end{equation}
Since $m_i$, $n_i$, and $s_i$ are all $0$ for $i > \ell_b(s)$, and since $r_i \in \{0, 1\}$, the set of equations above defines $r_i$ uniquely, inductively down from $i = \ell_b(s)$. 

\begin{example}
\begin{enumerate}
\item We compute $r_3(77, 11)$. We have $\rite_3(77) = 2212$ and $\rite_3(11) = 102$. Since the expansions are finite, the carry digits are computed simply in performing the addition algorithm base $3$:  
$$\begin{array}[t]{r}
\overset{1}{\phantom{1}}\overset{1}{2}\overset{0}2 \overset{1}1 \overset{}2\\
+ 102\\ 
\hline
10021
\end{array}$$
From this we read off the carry-digit word: $r_3(77, 5) = 1101.$ Note the shift one space to the right in our indexing from how one conventionally keeps track of carry digits in long addition. 

\item We compute $r_5(\frac{53}{60}, \frac{23}{100})$. We have 
$\rite_5\left(\textstyle\frac{53}{60}\right) = 0.4(20)^\infty$ and $\rite_5\left(\textstyle \frac{23}{100}\right) = 0.10(3)^\infty.$ 
Their sum is $\frac{167}{150} = \reed_5(1.02(40)^\infty)$. Comparing the base-5 expansions of the two addends with the expansion of the sum allows us to compute the carry digits left to right. 
$$\begin{array}[t]{r}
\overset{1}{0}.\overset{0}4 \overset{0}2 \overset{1}0\overset{0}2\overset{1}0\overset{0}2\overset{1}0\overset{0}2\overset{\ldots}{\phantom{9}}\hspace{-7pt}\ldots\\
+ 0.10333333\ldots \\ 
\hline
1.02404040\ldots
\end{array}$$
Therefore $r_5(\frac{53}{60}, \frac{23}{100}) = 0.10(01)^\infty$. In this case, we will get the same infinite carry-digit word if we take the ``limit" of the finite carry-digit words obtained by truncating the expansions of the two addends. 

\item We insist that $r_{10}(\frac{1}{3}, \frac{2}{3}) = 0.1^\infty$, even though any finite truncation of the decimal expansions of $\frac{1}{3}$ and $\frac{2}{3}$ would yield no carry digits in the sum. In full generality (i.e., if the addends are not in $\Z[\frac{1}{b}]$ but the sum is), one needs to know the expansion of the sum before one can compute the carry-digit word. 
\end{enumerate}

\end{example}

The carry digit word exactly keeps track of the difference between values of $c_{b, \beta}$: 

\begin{lemma} \label{lemma:carrydigits}For $m, n$ be in $\R_{\geq 0}$, we have 
$$c_{b, \beta}(m) + c_{b, \beta}(n) = c_{b, \beta}(m +  n) + (b - \beta) \, \reed_\beta\big(r_b(m, n)\big).$$
\end{lemma}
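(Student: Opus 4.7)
The plan is a direct digit-by-digit bookkeeping computation. For $q \in \{m, n, m+n\}$, let $q_i$ denote the $i$th base-$b$ digit of $\rite_b(q)$, so by definition $c_{b,\beta}(q) = \reed_\beta(\rite_b(q)) = \sum_i q_i \beta^i$. Write $s = m+n$ and let $r_i$ be the $i$th carry digit, i.e., the $i$th letter of $r_b(m,n)$, so that by \eqref{eq:carrydigit} we have $m_i + n_i + r_{i-1} = s_i + b r_i$ for every $i \in \Z$.

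First I would rewrite the desired equation with $c_{b,\beta}(m+n)$ moved to the left, and expand everything as a single sum indexed by $i$:
\begin{equation*}
c_{b,\beta}(m) + c_{b,\beta}(n) - c_{b,\beta}(m+n) \;=\; \sum_i (m_i + n_i - s_i)\,\beta^i.
\end{equation*}
Using the carry-digit recursion to substitute $m_i + n_i - s_i = b r_i - r_{i-1}$, the sum becomes
\begin{equation*}
b \sum_i r_i \beta^i \;-\; \sum_i r_{i-1}\,\beta^i \;=\; b\,\reed_\beta(r_b(m,n)) \;-\; \beta\,\reed_\beta(r_b(m,n)),
\end{equation*}
where the second equality uses the reindexing $j = i-1$ in the second sum. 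This gives exactly $(b-\beta)\,\reed_\beta(r_b(m,n))$, as required.

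The only nontrivial point is justifying the rearrangement of the doubly-infinite sums. For $i$ larger than $\max\{\ell_b(m), \ell_b(n), \ell_b(s)\}$ the digits $m_i$, $n_i$, $s_i$ all vanish, and then \eqref{eq:carrydigit} forces $r_{i-1} = b r_i$; combined with $r_i \in \{0,1\}$ this gives $r_i = 0$ in that range, so all four series have only finitely many terms with $i \geq 0$. For $i \to -\infty$ each digit is bounded (by $b-1$ or by $1$), and since $\beta \geq 2$ the tails are dominated by convergent geometric series, so all four series converge absolutely and the reindexing is legal. I do not expect any real obstacle here: the lemma is a bookkeeping identity expressing that the discrepancy between summing digits before and after carries is exactly $(b-\beta)$ times the ``value'' of the carry word when read in the new base, and the proof is essentially just writing this statement out.
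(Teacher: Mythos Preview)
Your proof is correct and follows essentially the same route as the paper: multiply the carry-digit relation \eqref{eq:carrydigit} by $\beta^i$, sum over $i$, and reindex the $r_{i-1}$ term to pull out a factor of $\beta$. The paper's proof is terser and omits the convergence discussion you included, but the argument is identical.
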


\begin{proof}
Let $s = m + n$, and let $\underline m$, $\underline n$, $\underline s$ be the corresponding base-$b$ expansions and $\underline r$ the carry-digit word. Scaling equation \eqref{eq:carrydigit} by $\beta^i$ and summing up over all $i$ gives us 
$$\sum m_i \beta^i + \sum n_i \beta^i + \sum r_{i-1} \beta^i = \sum s_i \beta^i + b \sum r_i \beta^i$$
or, equivalently,
$$c_{b, \beta}(m) + c_{b, \beta} (n) + \beta \pi_\beta(\underline r) = c_{b,\beta}(m + n) + b  \pi_\beta(\underline r).$$
\end{proof}

We will typically use Lemma \ref{lemma:carrydigits} when comparing $c_{b, \beta}(m)$ and $c_{b,\beta}(n)$ by analyzing $c_{b, \beta}(m-n)$ and $r_b(m-n, n)$. 

\begin{example}
\begin{enumerate}%\leavevmode
\item Consider $(b, \beta) = (3, 2)$ and the $77 + 11 = 88$ example from above. We have $c_{3, 2}(77) = \reed_2(2212) = 28$, and $c_{3, 2}(88) = \reed_2(10021) = 21$. The difference is accounted for by $c_{3, 2}(11) = \reed_2(102) = 6$ and the carry digit word evaluation in base~$2$. Namely, we have $\reed_2 \big(r_3(77, 11)\big) = \reed_2(1101) = 13$, and then $28 + 6 = 21 + 13$, as in Lemma \ref{lemma:carrydigits}. 
\item Let $(b,\beta) = (5, 3)$, and consider the $\frac{53}{60}+\frac{23}{100}$ example from above. Using Lemma \ref{lemma:contentcompute}, we find that 
\begin{align*}
c_{5, 3}(\textstyle\frac{53}{60}) &= \reed_3(0.4(20)^\infty) &= \frac{\reed_3(4)}{3} + \frac{\reed_3(20)}{3(3^2 - 1)} &= \frac{19}{12}\\
c_{5, 3}(\textstyle\frac{23}{100}) &= \reed_3(0.10(3)^\infty) &= \frac{\reed_3(10)}{3^2} + \frac{\reed_3(3)}{3^2(3-1)} &= \frac{1}{2}\\
c_{5, 3}(\textstyle\frac{167}{150}) &= \reed_3(1.02(40)^\infty) &= \reed_3(1) + \frac{\reed_3(02)}{3^2} + \frac{\reed_3(40)}{3^2(3^2 - 1)} &= \frac{25}{18}\\
\reed_3\big(r_5(\textstyle\frac{53}{60}, \frac{23}{100})\big) &= \reed_3\big(0.10(01)^\infty\big) &= \frac{\reed_3(10)}{3^2} + \frac{\reed_3(01)}{3^2(3^2 - 1)} &= \frac{25}{72}.
\end{align*}
As expected from Lemma \ref{lemma:carrydigits}, we have $\frac{19}{12} + \frac{1}{2} = \frac{25}{12} = \frac{25}{18} + 2\cdot \frac{25}{72}$. 

\item Finally, let $b = 10$ and return to the addition equation $\frac{1}{3} + \frac{2}{3} = 1$. For $a \in \mathcal D(9)$, we have $\reed_\beta(0.a^\infty) =  \frac{a}{\beta - 1}$. Therefore the two sides of the Lemma \ref{lemma:carrydigits} equation agree:
\begin{align*} 
& \mbox{(LHS)} \quad c_{10, \beta}(\textstyle \frac{1}{3}) + c_{10, \beta}(\frac{2}{3}) &= & \reed_\beta(0.3^\infty) + \reed_\beta(0.6^\infty) & =  \frac{9}{\beta - 1}\\
& \mbox{(RHS)}\quad c_{10, \beta}(1) + (10 - \beta)\reed_\beta\big(r_{10}(\textstyle \frac{1}{3}, \frac{2}{3})\big)   &=& 
1 + (10 - \beta) \reed_\beta(0.1^\infty) & = \displaystyle \frac{9}{\beta - 1}.
\end{align*}
\end{enumerate}

\end{example}

\section{Content of some proper fractions}\label{sec:ineqs}
In this section, we continue notation from the previous section, but further assume that $d > 1$.  We prove some inequalities about $c_{b, \beta}(\frac{1}{d})$ and $c_{b, \beta}(\frac{D}{b})$ that we will use in section \ref{nilgrowthsec} to produce a $(b, d, D)$-nilgrowth witness. 

\subsection{Unit fractions base $b$}
To motivate the discussion, we note that  $$\frac{1}{d} = \frac{\frac{1}{b}}{1 - \frac{b-d}{b}} = \sum_{k \geq 1} (b-d)^{k-1} b^{-k}.$$
Therefore, the base-$b$ expansion of $\frac{1}{d}$ ``wants" to be $0.1\, (b-d) \,(b-d)^2 \,(b-d)^3\ldots$. Of course, unless $b - d \leq 1$, this is not possible: $(b-d)^k$ is not a digit base $b$ for $k$ large enough. However, letting $\rite_b(\frac{1}{d}) = 0.a_1 a_2 a_3 \ldots$, we can say the following: 

\begin{lemma}\label{lemma:aiform}
For $k \geq 1$, we have $a_i = (b-d)^{i-1}$ for $i = 1, \ldots, k$ if and only if $(b-d)^k < d$. 
\end{lemma}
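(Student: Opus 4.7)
The plan is to start from the geometric-series expansion
\[
\frac{1}{d} \;=\; \frac{b^{-1}}{1-(b-d)/b} \;=\; \sum_{i\ge 1}(b-d)^{i-1}\,b^{-i},
\]
which, truncated at the $k$-th term, gives the exact identity
\[
\frac{1}{d} \;=\; \sum_{i=1}^{k}(b-d)^{i-1}\,b^{-i} \;+\; \frac{(b-d)^{k}}{d\,b^{k}}. \qquad (\dagger)
\]
Both directions of the biconditional should fall out of $(\dagger)$ together with the defining property of $\rite_b$ recorded in subsection \ref{contsub}: namely, $\rite_b(\tfrac{1}{d})$ is the unique base-$b$ expansion of $\tfrac{1}{d}$ not ending in $(b-1)^\infty$.

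For the forward direction, I would assume $(b-d)^k < d$. Every coefficient $(b-d)^{i-1}$ with $1\le i\le k$ is then a legal base-$b$ digit: when $b-d \ge 1$ by the monotonicity chain $(b-d)^{i-1}\le (b-d)^{k-1}\le (b-d)^k<d\le b$, and trivially when $d=b$. Meanwhile the remainder $r:=(b-d)^k/(d\,b^k)$ satisfies $0\le r<b^{-k}$, so $\rite_b(r)$ has no nonzero digits in positions $1,\dots,k$. Splicing the finite digit string $1,(b-d),\ldots,(b-d)^{k-1}$ from the partial sum in $(\dagger)$ with $\rite_b(r)$ thus produces a base-$b$ representation of $\tfrac{1}{d}$ that does not end in $(b-1)^\infty$ (the spliced tail inherits normality from $\rite_b(r)$). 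By uniqueness of the normal expansion, this splice must be $\rite_b(\tfrac 1 d)$, and so $a_i = (b-d)^{i-1}$ for $i=1,\ldots,k$.

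For the reverse direction, I would assume $a_i=(b-d)^{i-1}$ for $i=1,\ldots,k$, subtract the corresponding partial sum from $\tfrac 1 d$, and match against $(\dagger)$ to obtain
\[
\sum_{i>k}a_i\,b^{-i}\;=\;\frac{(b-d)^k}{d\,b^k}.
\]
The shifted tail $b^k\cdot\sum_{i>k}a_i\,b^{-i}=0.a_{k+1}a_{k+2}\cdots$ is a suffix of $\rite_b(\tfrac 1 d)$, hence itself a normal base-$b$ expansion, so it does not end in $(b-1)^\infty$ and is in particular strictly less than $1$. Unwinding this inequality gives $(b-d)^k<d$.

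The only step that is not pure arithmetic is the appeal to normality of $\rite_b$ in the reverse direction, needed to exclude the boundary case $(b-d)^k=d$, which would formally correspond to a $(b-1)^\infty$ tail. This is the only point worth flagging, and it is dispatched immediately by the definition of $\rite_b$.
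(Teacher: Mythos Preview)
Your proof is correct, but it takes a different route from the paper's. The paper argues by induction on $k$, using the recursive digit-extraction formula
\[
a_k \;=\; \left\lfloor \frac{b^k}{d} \right\rfloor - \sum_{i=1}^{k-1} a_i\,b^{k-i}
\]
(this is equation~\eqref{eq:ak}): the base case checks directly that $a_1=\lfloor b/d\rfloor=1$ iff $b-d<d$, and the inductive step computes $a_k$ explicitly under the hypothesis $a_i=(b-d)^{i-1}$ for $i<k$, reducing the condition $a_k=(b-d)^{k-1}$ to the inequality $(b-d)^k/d<1$ via a telescoping geometric sum. Your argument bypasses the induction entirely by invoking the truncated geometric identity $(\dagger)$ once and appealing to the uniqueness of the normal expansion $\rite_b$; the normality clause is precisely what rules out the boundary case $(b-d)^k=d$ in your reverse direction. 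This is a bit more conceptual and slightly shorter. On the other hand, the paper's inductive computation immediately yields Corollary~\ref{cor:aiform} (the exact value of $a_k$ when the hypothesis fails for the first time at step $k$), which your approach does not produce without an extra line of work.
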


\begin{proof}

We will establish this claim by induction on $k$. The $a_k$ can be defined recursively via 
\begin{equation}\label{eq:ak} 
a_k =  \left\lfloor \frac{b^k}{d}- \sum_{i = 1}^{k-1} a_i b^{k-i} \right\rfloor=  \left\lfloor \frac{b^k}{d}\right\rfloor - \sum_{i = 1}^{k-1} a_i b^{k-i}.
\end{equation}

For $k = 1$, we have $a_1 = \lfloor \frac{b}{d} \rfloor \geq 1$. Therefore $a_1 = 1$ if and only if  $\frac{b}{d} < 2$, which is equivalent to $(b-d)^1 < d$. So the claim for $k = 1$ is true.  

Now suppose we already know that $a_i = (b-d)^{i-1}$ for $i < k$. Then $a_k = (b-d)^{k-1}$ if and only if 
\begin{align*}
1& > \left(\frac{b^k}{d} - \sum_{i = 1}^{k-1} a_i b^{k-i}\right) - (b-d)^{k-1}\\ &= \frac{b^k}{d} - \big(b^{k-1} + (b-d) b^{k-2} + \cdots + (b-d)^{k-2} b + (b-d)^{k-1}\big) \\
&= \frac{b^k}{d} - \frac{b^k - (b-d)^k}{b - (b-d)} = \frac{(b-d)^k}{d}, 
\end{align*}
as desired.
\end{proof}

The same argument also implies the immediate
\begin{cor}\label{cor:aiform} If  $a_i = (b-d)^{i-1}$ for $i<k$, then $a_k = (b - d)^{k-1} +  \left\lfloor \frac{(b-d)^{k}}{d}  \right\rfloor.$
\end{cor}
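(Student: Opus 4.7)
The plan is to reuse verbatim the key computation already performed inside the proof of Lemma \ref{lemma:aiform}. Under the hypothesis that $a_i = (b-d)^{i-1}$ for all $i < k$, that proof established the telescoping identity
\[
\frac{b^k}{d} - \sum_{i=1}^{k-1} a_i b^{k-i} \;=\; (b-d)^{k-1} + \frac{(b-d)^k}{d},
\]
obtained by summing the geometric progression $b^{k-1} + (b-d)b^{k-2} + \cdots + (b-d)^{k-1}$ via $\frac{b^k - (b-d)^k}{d}$ and rearranging.

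I would then simply feed this identity into the recursion \eqref{eq:ak}, which reads $a_k = \bigl\lfloor \tfrac{b^k}{d} - \sum_{i=1}^{k-1} a_i b^{k-i}\bigr\rfloor$. Since $(b-d)^{k-1}$ is an integer, it passes through the floor, leaving
\[
a_k \;=\; (b-d)^{k-1} + \left\lfloor \frac{(b-d)^k}{d}\right\rfloor,
\]
which is exactly the claim. There is no real obstacle: the corollary is just the ``bookkeeping'' version of the telescoping calculation already done, and the only subtlety was separating the integer part $(b-d)^{k-1}$ from the fractional contribution $(b-d)^k/d$ inside the floor. Lemma \ref{lemma:aiform} itself is the special case where $(b-d)^k < d$ forces the floor term to vanish.
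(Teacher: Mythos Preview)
Your proposal is correct and is exactly what the paper intends: it states that ``the same argument also implies the immediate'' corollary, meaning precisely that one plugs the telescoping identity from the proof of Lemma~\ref{lemma:aiform} into the recursion~\eqref{eq:ak} and pulls the integer $(b-d)^{k-1}$ out of the floor.
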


We can now delineate what $\frac{1}{d}$ must look like base $b$.

\begin{lemma}\label{1dstructurelemma}
For $d \leq b$, the base-$b$ expansion of $\frac{1}{d}$ falls into one of five mutually exclusive cases. 
\begin{enumerate}
\item\label{0.2} $\rite_b(\frac{1}{d}) = 0.2^+ \ldots$ (in other words, $a_1 \geq 2$) iff $d \leq \frac{b}{2}$. 
\item\label{0.13} $\rite_b(\frac{1}{d}) = 0.13^+\ldots$ (i.e., $a_1 = 1$ and $a_2 \geq 3$) iff $\frac{b}{2} < d \leq \frac{b^2}{b+3} = b - 3 + \frac{9}{b + 3}$. 
\item\label{0.124} $\rite_b(\frac{1}{d}) = 0.124^+ \ldots$ (i.e., $a_1 = 1$, $a_2 = 2$, and $a_3 \geq 4$) iff $b > 6$ and $d = b - 2$.  
\item $\rite_b(\frac{1}{d}) = 0.1^\infty$ iff $d = b - 1$. 
\item $\rite_b(\frac{1}{d}) = 0.10^\infty$ iff $d= b$. 
\end{enumerate}
\end{lemma}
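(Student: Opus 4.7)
The proof comes down to computing the first few base-$b$ digits $a_1, a_2$ (and, in case (3), $a_3$) of $1/d$ explicitly as $d$ ranges over $2 \leq d \leq b$. The plan is to use $a_1 = \lfloor b/d \rfloor$ to separate the case $a_1 \geq 2$ (which gives case (1)) from the case $a_1 = 1$, and then, in the latter, to dissect further by the value of $a_2 = \lfloor b(b-d)/d \rfloor$, which follows from Corollary~\ref{cor:aiform} (or from $b^2 \cdot(\tfrac{1}{d} - \tfrac{1}{b}) = b(b-d)/d$).

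For the opening step, $\lfloor b/d \rfloor \geq 2$ is equivalent to $d \leq b/2$, settling case~(1). Otherwise $a_1 = 1$ and $b/2 < d \leq b$. The two endpoints are immediate: $d = b$ gives $\rite_b(1/d) = 0.10^\infty$ (case (5)), and $d = b - 1$ gives $\rite_b(1/d) = 0.1^\infty$ via the geometric identity $\sum_{k \geq 1} b^{-k} = 1/(b-1)$ (case (4)).

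For the interior range $b/2 < d < b - 1$, I would rearrange $a_2 \geq 3$ and $a_2 \geq 2$ into $d \leq b^2/(b+3)$ and $d \leq b^2/(b+2)$ respectively, so case (2) corresponds to $d \leq b^2/(b+3)$. Since $b^2/(b+2) = b - 2 + 4/(b+2) < b - 1$ (for $b > 2$), the only integer in the half-open interval $(b^2/(b+3),\, b^2/(b+2)]$ can be $d = b - 2$, and this interval is nonempty precisely when $b^2/(b+3) < b - 2$, i.e.\ when $9/(b+3) < 1$, i.e.\ when $b > 6$. This pins down case (3) to $d = b - 2$ with $b > 6$. The asserted bound $a_3 \geq 4$ then follows from the polynomial identity $b^3/(b-2) = b^2 + 2b + 4 + 8/(b-2)$, which gives $a_3 = \lfloor 4 + 8/(b-2) \rfloor \geq 4$ for all $b \geq 7$.

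I expect the main obstacle to be purely bookkeeping: verifying that the five cases are mutually exclusive and jointly exhaustive on the integer range $2 \leq d \leq b$, and handling the rounding carefully at the boundary values $b^2/(b+2)$ and $b^2/(b+3)$ so that $d = b - 2$ is correctly identified as the sole integer ``escape'' from case (2), and only when $b > 6$. No substantive new idea is required beyond Lemma~\ref{lemma:aiform}, Corollary~\ref{cor:aiform}, and the geometric series.
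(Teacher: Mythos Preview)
Your proposal is correct and follows essentially the same approach as the paper: both proofs compute $a_1 = \lfloor b/d\rfloor$ to split off case~(1), dispatch the endpoints $d=b$ and $d=b-1$ directly, and then analyze $a_2$ (and $a_3$) in the remaining range via Corollary~\ref{cor:aiform}. The only stylistic difference is in isolating case~(3): you pin down $d=b-2$ by showing the interval $(b^2/(b+3),\,b^2/(b+2)]$ has length $<1$ and contains $b-2$ exactly when $b>6$, whereas the paper uses the structural bound $a_2 \geq b-d$ from Lemma~\ref{lemma:aiform} to see that $a_2\geq 3$ fails only when $b-d=2$, and then invokes $(b-d)^2<d$ to get both $b>6$ and $a_3\geq 4$ in one stroke.
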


\begin{proof}
If $b -d = 0$ or $b - d = 1$, then $a_i = (b-d)^{i - 1}$ for all $i$ (Lemma \ref{lemma:aiform}). Assume $b - d \geq 2$. If $d \leq \frac{b}{2}$, then $\frac{1}{d} \geq \frac{2}{b}$, so that $a_1 \geq 2$, as claimed. Otherwise, we must have $(b-d)^1 < d$, so that $a_1 = 1$ and $a_2 \geq b - d$. This means that $a_2 \geq 3$ unless \emph{both} $b-d = 2$ and $(b-d)^2 < d$, in which case we have $d > 4$ (and hence $b > 6$), and $a_2 \geq (b-d)^2 = 4$.  
\end{proof}

\subsection{The carry-digit word for a proper fraction base $b$}\label{sec:Dcarryword}
Now we additionally fix a $D$ with $1 \leq D < d$ and investigate $\rite_b(\frac{D}{d})=: 0.e_1 e_2 e_3 \ldots$. The $e_k$ satisfy the same type of recursion as the $a_k$, namely, 
$$e_k =  \left\lfloor \frac{b^k D}{d} \right\rfloor- \sum_{i = 1}^{k-1} e_i b^{k-i}. $$ In particular, $e_1 = \left \lfloor \frac{Db}{d} \right\rfloor$, and the following generalization of Lemma \ref{lemma:aiform} and Corollary \ref{cor:aiform} holds: 
\begin{lemma} \label{lemma:eiform}
For $k \geq 0$, we have $e_i = D(b-d)^{i-1}$ for $i = 1, \ldots, k$ if and only if $D(b-d)^k < d$. Moreover, if $D(b-d)^k < d$, then $e_{k + 1} = D(b-d)^k + \left\lfloor \frac{D(b-d)^{k+1}}{d} \right\rfloor$. 
\end{lemma}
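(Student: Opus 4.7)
The plan is to mirror the inductive proof of Lemma \ref{lemma:aiform} and Corollary \ref{cor:aiform}, specialized to the base-$b$ expansion of $\frac{D}{d}$. Recall the recursive definition $e_k = \lfloor b^k D/d\rfloor - \sum_{i=1}^{k-1} e_i b^{k-i}$. For the base case $k = 1$, write $Db = D(b-d) + Dd$; dividing by $d$ gives $Db/d = D + D(b-d)/d$, so $e_1 = D + \lfloor D(b-d)/d \rfloor$. Hence $e_1 = D = D(b-d)^0$ if and only if $D(b-d) < d$, which is the $k=1$ case of the first assertion. This also gives the "moreover" clause at $k=0$.

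For the inductive step, assume $e_i = D(b-d)^{i-1}$ for $i = 1, \ldots, k-1$. The key algebraic ingredient is the telescoping identity
\[
b^k - (b-d)^k = d \sum_{i=1}^{k}(b-d)^{i-1} b^{k-i},
\]
which, after separating off the $i=k$ term and rearranging, yields $d\sum_{i=1}^{k-1}(b-d)^{i-1} b^{k-i} = b^k - b(b-d)^{k-1}$. Substituting the inductive assumption into the defining recursion for $e_k$ then collapses the bracketed expression as
\[
\frac{b^k D}{d} - \sum_{i=1}^{k-1} D(b-d)^{i-1} b^{k-i} = \frac{D}{d}\bigl(b^k - b^k + b(b-d)^{k-1}\bigr) = \frac{bD(b-d)^{k-1}}{d}.
\]

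Finally, writing $b = (b-d) + d$ splits this expression as $D(b-d)^{k-1} + D(b-d)^k/d$, where the first summand is an integer. Taking the floor gives
\[
e_k = D(b-d)^{k-1} + \left\lfloor \frac{D(b-d)^k}{d} \right\rfloor,
\]
and therefore $e_k = D(b-d)^{k-1}$ if and only if $D(b-d)^k < d$, completing the induction. The displayed formula, applied at index $k+1$ once we know $D(b-d)^k < d$ (so the hypothesis of the inductive step holds up through $k$), is exactly the "moreover" assertion. No step should present a serious obstacle; the main thing to be careful about is bookkeeping of the geometric sum so that the cancellation works out cleanly, and confirming the iff direction from both sides of the floor computation.
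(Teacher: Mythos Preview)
Your proof is correct and follows exactly the approach the paper intends: the paper does not give a separate proof of Lemma~\ref{lemma:eiform} but simply notes that it is the direct generalization of Lemma~\ref{lemma:aiform} and Corollary~\ref{cor:aiform}, and your argument is precisely that generalization, replacing $1$ by $D$ throughout and using the same geometric-sum telescoping $\sum_{i=1}^{k}(b-d)^{i-1}b^{k-i} = \frac{b^k - (b-d)^k}{d}$ to collapse the recursion.
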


To understand the relationship between the $a_k$ and the $e_k$ we define the carry-digit word $\underline{r} = 0.r_1 r_2 r_3 \ldots$ for the addition problem $\underbrace{\textstyle \frac{1}{d} + \cdots + \frac{1}{d}}_{D\ {\rm times}} = \frac{D}{d}$. (See section~\ref{sec:carryword} for definitions.) In other words, set $\underline{r}^{(i)} := 0.r^{(i)} _1 r_2^{(i)}  r_3^{(i)} \ldots := r_b(\frac{i}{d}, \frac{1}{d})$ for $1 \leq i \leq D-1$, and then set $r_j := \sum_{i = 1}^{d-1} r_j^{(i)} $. 

Then $0 \leq r_j \leq D-1$ for every $i$; since $\frac{D}{d} < 1$, we have $r_1 = 0$. Putting together equations \eqref{eq:carrydigit} applied to $\frac{i}{d} + \frac{1}{d}$ for $1 \leq i < D$ gives us the precise relationship between the $a_k$, $e_k$, and $r_k$: 
\begin{equation}\label{eq:Dmult}
e_k = D a_k + r_{k + 1} - b r_k. 
\end{equation}

In fact, we have a closed formula for $r_k$: 

\begin{lemma}\label{r3lemma}
For $k \geq 1$ we have $$r_{k}  \quad = \quad \left\lfloor \frac{Db^{k-1}}{d} \right\rfloor - D\left\lfloor \frac{b^{k-1}}{d} \right\rfloor \quad = \quad \left\lfloor \frac{D(b-d)^{k-1}}{d} \right\rfloor - D\left\lfloor \frac{(b-d)^{k-1}}{d} \right\rfloor.$$
\end{lemma}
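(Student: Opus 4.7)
\textbf{Proof plan for Lemma \ref{r3lemma}.}

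The two stated equalities have rather different flavors, so I would attack them separately. The first is combinatorial and comes from unwinding the carry recursion; the second is a congruence miracle that exploits $b \equiv b-d \pmod d$.

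\emph{First equality via telescoping.} My starting point is equation \eqref{eq:Dmult}: $e_k = D a_k + r_{k+1} - b r_k$, together with the boundary condition $r_1 = 0$. Multiplying this relation by $b^{k-i}$ and summing over $i = 1, \ldots, k$, I use the fact that the partial digit sums of $\rite_b(1/d)$ and $\rite_b(D/d)$ give the truncated integer parts: $\sum_{i=1}^{k} a_i b^{k-i} = \lfloor b^k/d\rfloor$ and $\sum_{i=1}^{k} e_i b^{k-i} = \lfloor b^k D/d\rfloor$. The telescope $\sum_{i=1}^{k}(r_{i+1} - b r_i) b^{k-i}$ collapses to $r_{k+1} - b^k r_1 = r_{k+1}$, yielding
$$\left\lfloor \frac{b^k D}{d}\right\rfloor \;=\; D\left\lfloor \frac{b^k}{d}\right\rfloor + r_{k+1}.$$
Shifting the index $k \mapsto k-1$ gives the first claimed formula (and the $k=1$ case matches $r_1 = 0$).

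\emph{Second equality via reduction mod $d$.} I would define the auxiliary function $f\from \Z_{\geq 0} \to \Z$ by $f(x) := \lfloor Dx/d\rfloor - D\lfloor x/d\rfloor$. Writing $x = qd + s$ with $0 \leq s < d$, a one-line computation gives $f(x) = \lfloor Ds/d\rfloor$, so $f$ depends only on $x \bmod d$. Since $b \equiv b - d \pmod d$, powers satisfy $b^{k-1} \equiv (b-d)^{k-1} \pmod d$, and hence $f(b^{k-1}) = f((b-d)^{k-1})$. This is exactly the second equality.

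\emph{Where difficulty might arise.} Neither step looks genuinely hard; the only subtle point is setting up the first telescoping correctly (making sure the boundary term uses $r_1 = 0$ and not some misaligned index). I would also sanity-check both formulas in a low case such as $k = 1, 2$ and against Lemma \ref{lemma:eiform}: for $k$ with $D(b-d)^{k-1} < d$ we have $\lfloor D(b-d)^{k-1}/d\rfloor = 0$ and $\lfloor (b-d)^{k-1}/d \rfloor = 0$, giving $r_k = 0$, which matches the fact that the first few carry digits vanish until the digit $e_k$ starts outgrowing $Da_k$.
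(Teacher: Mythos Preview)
Your proposal is correct and follows essentially the same route as the paper: the first equality is obtained by telescoping the relation $e_i = D a_i + r_{i+1} - b r_i$ against powers of $b$ (the paper phrases this as an induction, you as a direct sum, but the computation is identical), and the second equality uses the congruence $b \equiv b - d \pmod d$ (the paper via binomial expansion, you via the auxiliary function $f$, again the same idea). Your packaging via $f(x) = \lfloor Ds/d\rfloor$ where $s = x \bmod d$ is arguably a bit cleaner.
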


\begin{proof}
The formula is true for $k = 1$ (in our case, all quantities are $0$). For $k \geq 1$, we will establish the formula for $k + 1$, starting with equation \ref{eq:Dmult}:
\begin{align*}
r_{k + 1} =  e_k - Da_k + b r_k
&= 
\left\lfloor \frac{D b^k}{d}\right\rfloor - \sum_{i = 1}^{k-1} e_i b^{k-i}  - 
D\left\lfloor \frac{b^k}{d} \right\rfloor  + D\sum_{i = 1}^{k-1} a_i b^{k-i} + b r_k\\
&= \left\lfloor \frac{D b^k }{d}\right\rfloor - D\left\lfloor \frac{b^k}{d} \right\rfloor + 
\sum_{i = 1}^{k-1} b^{k-i} (b r_i - r_{i + 1}) + br_k
= \left\lfloor \frac{D b^k }{d}\right\rfloor - D\left\lfloor \frac{b^k}{d} \right\rfloor
\end{align*}
Here we used equation \eqref{eq:Dmult} in the form $-e_i + Da_i = b r_i - r_{i + 1}$ for $1 \leq i < k$ to pass from the first line to the second, and cancelation of a telescoping sum to pass from the second to the third. 
Finally, we note that 
$$\left\lfloor \frac{D b^k }{d}\right\rfloor - D\left\lfloor \frac{b^k}{d} \right\rfloor = \left\lfloor \frac{D (b-d)^k }{d}\right\rfloor - D\left\lfloor \frac{(b-d)^k}{d} \right\rfloor$$
because the intervening terms $\frac{D \sum_{i = 1}^k {k \choose i} b^{k-i} (-d)^i}{d}$ are integers, and hence can pass through the greatest-integer function to cancel.  
\end{proof}

\begin{cor}\label{r2cor}\leavevmode
\begin{enumerate}
\item If $(b-d)^k < d$ for some $k \geq 0$, then for every $i \leq k + 1$, we have $r_i = \left\lfloor \frac{D(b-d)^{i-1}}{d} \right\rfloor$.
\item If $D(b-d)^k < d$ for some $k \geq 0$, then for every $i \leq k + 1$, we have $r_i = 0$. 
\end{enumerate}
\end{cor}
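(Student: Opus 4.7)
The plan is to deduce both parts directly from the closed formula for $r_k$ given in Lemma \ref{r3lemma}, namely
$$r_k = \left\lfloor \frac{D(b-d)^{k-1}}{d} \right\rfloor - D\left\lfloor \frac{(b-d)^{k-1}}{d} \right\rfloor.$$
The point is that under either hypothesis one of the two floor terms (in part (1)) or both floor terms (in part (2)) vanish for all $i$ in the relevant range, because the numerators are strictly less than $d$.

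For part (1), first I would note that the hypothesis $(b-d)^k < d$ implies $(b-d)^{i-1} < d$ for every $i$ with $1 \leq i \leq k+1$: if $b - d \geq 1$ then the sequence $(b-d)^{i-1}$ is non-decreasing in $i$ so it is bounded above by $(b-d)^k$, and if $b - d = 0$ then $(b-d)^{i-1} \in \{1, 0\}$ and both are less than $d$ since $d > 1$. Hence $\bigl\lfloor \tfrac{(b-d)^{i-1}}{d}\bigr\rfloor = 0$ in the formula from Lemma \ref{r3lemma}, leaving $r_i = \bigl\lfloor \tfrac{D(b-d)^{i-1}}{d} \bigr\rfloor$ as claimed.

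For part (2), the same monotonicity argument shows that $D(b-d)^{i-1} \leq D(b-d)^k < d$ for $i \leq k+1$, so $\bigl\lfloor \tfrac{D(b-d)^{i-1}}{d}\bigr\rfloor = 0$; and since $D \geq 1$ we also have $(b-d)^{i-1} \leq D(b-d)^{i-1} < d$, so $\bigl\lfloor \tfrac{(b-d)^{i-1}}{d}\bigr\rfloor = 0$ as well. Plugging both into the formula gives $r_i = 0$.

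There is no real obstacle here; the corollary is essentially a direct reading of Lemma \ref{r3lemma} once one verifies the elementary monotonicity claim about powers of $b - d$. The only small thing to be careful about is the edge case $b = d$, where $b - d = 0$ and one must interpret $(b-d)^0 = 1$ correctly, but this is handled as above.
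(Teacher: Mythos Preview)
Your proposal is correct and is precisely the argument the paper intends: the corollary is stated immediately after Lemma~\ref{r3lemma} with no proof, as a direct reading of the closed formula once the second floor term vanishes. Your handling of the monotonicity and the edge case $b=d$ is more explicit than what the paper writes, but the approach is identical.
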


\subsection{{$(b, \beta)$}-Content of proper fractions}\label{cdefsec}

As before, we have a triple $(b, d, D)$ with $1 \leq D \leq d \leq b$ subject to the conditions that $b \geq 2$ and $\beta: = b - D \geq 2$. Recall the content function $c_{b, \beta}$ from section \ref{contentdefsec}, and let $c_{b, \beta}^d: \Q_{\geq 0} \to \Q_{\geq 0}$ be the function defined via $$c_{b, \beta}^d(n) : = c_{b, \beta}(\textstyle \frac{n}{d}).$$
Whenever the triple $(b, d, D)$ is understood, we write $c = c_{b, \beta}^d$, and let $\underline r = 0.r_1 r_2 \ldots$ be the carry-digit word for $\frac{1}{d} + \cdots + \frac{1}{d} = \frac{D}{d}$ as in section~\ref{sec:Dcarryword}.  Lemma \ref{lemma:carrydigits} implies that 
\begin{equation} \label{eq:cDc1}
c(D) = Dc(1) - D\reed_\beta(\underline r).
\end{equation}

In this section, we will establish some lower bounds on $c(1)$ and $c(D)$.  First, we dispatch the cases $d = b$ and $d = b-1$, which yield easy explicit formulas.

\begin{lemma} \label{easycontlemma}
Suppose that $d = b$ or $d = b - 1$, and $0 \leq i < d$. Then $c(i) = \frac{i}{d - D}$.
\end{lemma}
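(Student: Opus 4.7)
The plan is to verify the formula directly in each of the two cases by writing down $\rite_b(i/d)$ explicitly and reading it off in base $\beta = b - D$.

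First I would handle the case $d = b$. Since $0 \le i < b$, the digit $i$ lies in $\mathcal D(b)$ and $i/d = i \cdot b^{-1}$, so $\rite_b(i/d) = 0.i\,0^\infty$. Applying $\reed_\beta$ gives $c(i) = i/\beta = i/(b - D) = i/(d - D)$, as claimed.

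Next I would handle the case $d = b - 1$. Using the geometric series $\sum_{k \ge 1} b^{-k} = 1/(b-1)$, I get $i/(b-1) = \sum_{k \ge 1} i\, b^{-k}$, and since $0 \le i < b - 1 < b$ this is the normal base-$b$ expansion (it does not end in $(b-1)^\infty$); thus $\rite_b(i/d) = 0.(i)^\infty$. Reading in base $\beta$ yields $c(i) = i \sum_{k \ge 1} \beta^{-k} = i/(\beta - 1) = i/(b - D - 1) = i/(d - D)$.

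Both cases produce the same formula $c(i) = i/(d - D)$, finishing the proof. There is no real obstacle here — the whole point of isolating these two cases is that the base-$b$ expansion of $1/d$ is trivial (Lemma~\ref{1dstructurelemma} cases (4) and (5) give $\rite_b(1/d) = 0.1^\infty$ or $0.10^\infty$), so scaling by $i$ and reading in base $\beta$ is immediate. The only thing to be careful about is confirming that the expansion $0.(i)^\infty$ is the normal one (not the forbidden $0.(b-1)^\infty$ representation), which follows from the strict inequality $i < d = b-1$.
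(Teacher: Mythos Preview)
Your proof is correct and is precisely the computation the paper has in mind: the paper's own proof just says ``Computation: see Lemma~\ref{1dstructurelemma}'' (cases (4) and (5)) and notes that $D \leq \min\{d, b-2\}$ ensures $d \neq D$, so the denominator $d - D$ is nonzero. Your explicit write-up of the two cases, including the check that $0.(i)^\infty$ is the normal expansion since $i < b-1$, is exactly what that one-line proof unpacks to.
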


\begin{proof}
Computation: see Lemma \ref{1dstructurelemma}. Note that $D \leq \min\{d, b - 2\}$ precludes the possibility that $d = D$. 
\end{proof}

\begin{prop}\label{cont1lemma}
If $d \leq b - 2$ and $b > 6$, then 
$c(1) \geq \frac{\beta + 1}{\beta (\beta - 1)}.$ 
\end{prop}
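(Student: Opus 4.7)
The plan is to use Lemma \ref{1dstructurelemma} to split into cases based on the first few base-$b$ digits of $1/d$, and in each case to bound $c(1) = \sum_{i \geq 1} a_i \beta^{-i}$ below by truncating to just those leading digits. Since the target $(\beta+1)/(\beta(\beta-1))$ is only of order $1/\beta$ for large $\beta$, controlling only the first two or three digits of $\rite_b(1/d)$ should already suffice.

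Under the hypotheses $d \leq b-2$ and $b > 6$, Cases (4) and (5) of Lemma \ref{1dstructurelemma} are excluded, so $\rite_b(1/d)$ falls into Case (1), (2), or (3), and I would dispatch each in turn. In Case (1) ($d \leq b/2$), I would use $a_1 \geq 2$ to conclude $c(1) \geq 2/\beta$, which beats the target precisely when $\beta \geq 3$; this holds automatically since $\beta \geq b-d \geq b/2 > 3$ using $b > 6$. In Case (2) ($b/2 < d \leq b^2/(b+3)$), the bound $c(1) \geq 1/\beta + 3/\beta^2$ reduces after clearing denominators to the same $\beta \geq 3$, and since $9/(b+3) < 1$ for $b > 6$ we get $d \leq b-3$, hence $\beta \geq 3$. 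In Case (3) ($d = b-2$, which already requires $b > 6$), the bound $c(1) \geq 1/\beta + 2/\beta^2 + 4/\beta^3$ clears to $(\beta^2 + 2\beta + 4)(\beta-1) \geq \beta^2(\beta+1)$, which simplifies to $2\beta \geq 4$, holding for all $\beta \geq 2$.

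There is no single major obstacle; the argument is a routine three-case verification, but a couple of checks are worth flagging. The most delicate step will be Case (3), where $D$ may be as large as $d = b-2$, forcing $\beta = 2$ and making the inequality tight. This is precisely why recording the extra digit $a_3 \geq 4$ in Lemma \ref{1dstructurelemma}(3) matters: the weaker bound from only $a_1 = 1$ and $a_2 = 2$ would not suffice at $\beta = 2$. It is also worth confirming that the three cases really cover every integer $d$ in $\{1, \ldots, b-2\}$: the floor of $b^2/(b+3) = b - 3 + 9/(b+3)$ equals $b-3$ exactly when $b > 6$, so the integer ranges of Case (2) and Case (3) meet at $d = b-2$ with no gap, which is the second role played by the hypothesis $b > 6$.
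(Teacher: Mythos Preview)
Your proposal is correct and follows essentially the same approach as the paper: a three-case split via Lemma~\ref{1dstructurelemma}, truncating $c(1)$ to its leading digits and reducing to the inequality $\beta \geq 3$ (resp.\ $\beta \geq 2$ in Case~(3)). The only cosmetic difference is that the paper handles the $\beta \geq 3$ verification in Cases~(1) and~(2) by the single observation that $\beta = 2$ forces $d = D = b-2$ and hence Case~(3), whereas you derive $\beta \geq 3$ separately in each case from the range of $d$ and the hypothesis $b > 6$.
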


\begin{rk}
It is a simple exercise to check that the only exceptions for $b \leq 6$ are in fact $(b, d, D) = (4, 2, 2)$, $(5, 3, 3)$, or $(6, 4, 4)$ by exhausting all cases. 
\end{rk} 

\begin{proof} 
We go through the first three cases of Lemma \ref{1dstructurelemma}.
Note that $\beta = 2$ implies that all of the inequalities in $2 \leq b - d \leq b - D = \beta$ are equalities, so that $d = D$ and $d = b - 2$. In particular, if $\beta = 2$ and $b > 6$, then we must be in the third case. 

\begin{enumerate}
\item If $\rite_b(\frac{1}{d})$ starts with $0.2^+$, then $c(1) \geq \reed_\beta(0.2) = \frac{2}{\beta}$. Since $\beta \geq 3$, we have $2 \geq \frac{\beta + 1}{\beta - 1}$, so that $c(1) \geq \frac{\beta + 1}{\beta(\beta - 1)}$, as desired. 

\item If $\rite_b(\frac{1}{d})$ starts with $0.13^+$, then we have $c(1) \geq \rite_\beta(0.13) = \frac{\beta + 3}{\beta^2}$. This last is no less than $\frac{\beta + 1}{\beta(\beta - 1)}$ if and only if $\beta^2 + 2 \beta - 3 = (\beta + 3)(\beta - 1) \stackrel{?}\geq (\beta + 1)\beta = \beta^2 + \beta$. Therefore $\beta \geq 3$ again implies $c(1) \geq \frac{\beta + 1}{\beta(\beta - 1)}$, as desired. 

\item If $\rite_b(\frac{1}{d})$ starts with $0.124^+$, then $$c(1) \geq \reed_\beta\big(0.124\big) = \frac{\beta^2 + 2\beta + 4}{\beta^3} = \frac{\beta + 1}{\beta(\beta -1)} + \frac{2\beta - 4}{\beta^4 - \beta^3}.$$ Since $\beta \geq 2$, our claim is established. 
\end{enumerate}
\end{proof}

\begin{cor}\label{cor:cont1}
If $d \leq b - 2$ and $D \leq \frac{b}{2}$, then $c(1) \geq \frac{D}{\beta(\beta - 1)}$. 
\end{cor}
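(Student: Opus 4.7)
The plan is to deduce the corollary from the stronger bound $c(1) \geq \frac{\beta+1}{\beta(\beta-1)}$ of Proposition \ref{cont1lemma}, the cost of which is a minor numerical comparison plus checking a single exceptional small case. The key numerical observation is that, since $D$ is an integer with $D \leq b/2$, we have $2D \leq b$ and hence $D \leq b - D = \beta$; in particular $D \leq \beta + 1$, which, together with $\beta \geq 2$, yields
$$\frac{D}{\beta(\beta-1)} \;\leq\; \frac{\beta+1}{\beta(\beta-1)}.$$

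For $b > 6$, the hypothesis $d \leq b - 2$ lets me invoke Proposition \ref{cont1lemma} directly to obtain $c(1) \geq \frac{\beta+1}{\beta(\beta-1)}$, and the corollary follows by the display above. For $b \leq 6$, the remark following Proposition \ref{cont1lemma} enumerates the only triples $(b, d, D)$ for which its conclusion fails, namely $(4,2,2)$, $(5,3,3)$, and $(6,4,4)$. The last two violate the hypothesis $D \leq b/2$ and are therefore not in scope. The remaining case $(4, 2, 2)$ (so $\beta = 2$) I will dispatch by direct computation: $\rite_4(\tfrac{1}{2}) = 0.2$, hence $c(1) = c_{4,2}(\tfrac{1}{2}) = \reed_2(0.2) = 1$, which matches $\frac{D}{\beta(\beta - 1)} = \frac{2}{2 \cdot 1} = 1$; the claimed inequality holds with equality.

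The main obstacle here is essentially nil: all the real work is already packed into Proposition \ref{cont1lemma}. The corollary merely recasts that bound in the form required by the later nilgrowth-witness construction in section \ref{nilgrowthsec}, where the factor $D$ (rather than $\beta + 1$) is what will naturally appear. The only nontrivial point to be careful about is that the sharper proposition does have honest exceptions at small $b$, so one must verify by hand that those exceptions either lie outside the hypothesis $D \leq b/2$ or are harmless with respect to the weaker bound, which is what the direct check on $(4,2,2)$ accomplishes.
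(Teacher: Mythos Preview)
Your proof is correct and follows essentially the same route as the paper: deduce $D \leq \beta$ from $D \leq b/2$, invoke Proposition \ref{cont1lemma} for the main inequality, and handle the small exceptional triples by hand. Your treatment of the exceptional cases is in fact slightly more careful than the paper's, which simply asserts they are ``easy to check explicitly'' without noting that $(5,3,3)$ and $(6,4,4)$ are already excluded by the hypothesis.
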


\begin{proof} 
For $D \leq \frac{b}{2}$, we have $\beta = b - D \geq b - \frac{b}{2} = \frac{b}{2} \geq D$. Therefore Proposition \ref{cont1lemma} establishes the desired inequality, the exceptional cases $(b, d, D) = (4, 2, 2)$, $(5, 3, 3)$, or $(6, 4, 4)$ being easy to check explicitly. 
\end{proof}

In the next proposition we will show that $c(D)$ is not too small, provided that $d$ is not too big relative to $b$, or, failing that, that $D$ is not too big relative to $b$ and $d$. 

\begin{prop}\label{contDlemma}
Suppose $D < d \leq b - 2$ and \emph{at least one} of the following conditions is satisfied:
\begin{enumerate}
\item\label{dcond} $d \leq \frac{b}{2};$ 
\item\label{Dcond} $D < d\,(1 - \frac{1}{b-d}).$
\end{enumerate}
Then $c(D) \geq \frac{D(\beta + 1)}{\beta (\beta - 1)}.$
\end{prop}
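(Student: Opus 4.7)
The plan is to split by which condition holds and bound $c(D)$ from below using truncations of the base-$b$ expansion $\rite_b(D/d) = 0.e_1 e_2 \cdots$. Write $\gamma = b - d \geq 2$; since $D \leq d - 1 \leq b - 3$, we have $\beta = b - D \geq 3$ throughout.

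\emph{Condition (\ref{dcond}).} If $d \leq b/2$, then $a_1 = \lfloor b/d \rfloor \geq 2$ and hence $e_1 = \lfloor Db/d\rfloor \geq 2D$. Truncating to one digit, $c(D) \geq 2D/\beta$, and the inequality $2D/\beta \geq D(\beta+1)/(\beta(\beta-1))$ is equivalent to $\beta \geq 3$. So this case is essentially immediate.

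\emph{Condition (\ref{Dcond}), with $d > b/2$ (else the previous case applies).} Here $\gamma < d$, so $a_1 = 1$. Using \eqref{eq:Dmult} in the form $e_i = D a_i + r_{i+1} - b r_i$, truncating at index $k$, reindexing the $r_{i+1}$ sum, and using $r_1 = 0$ and $b - \beta = D$, I would derive the telescoped identity
\[
\sum_{i=1}^k \frac{e_i}{\beta^i} \;=\; \frac{D}{\beta} + \sum_{i=2}^k \frac{D(a_i - r_i)}{\beta^i} + \frac{r_{k+1}}{\beta^k}.
\]
Since all $e_i$ and $r_{k+1}$ are nonnegative, this yields $c(D) \geq D/\beta + \sum_{i=2}^k D(a_i - r_i)/\beta^i$. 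The central estimate is this: whenever $\gamma^{i-1} < d$, Lemma \ref{lemma:aiform} gives $a_i = \gamma^{i-1}$ and Corollary \ref{r2cor}(1) gives $r_i = \lfloor D\gamma^{i-1}/d\rfloor$; multiplying condition (\ref{Dcond}) through by $\gamma^{i-2}$ produces $D\gamma^{i-1} < d(\gamma^{i-1} - \gamma^{i-2})$, which, by integrality, sharpens to $r_i \leq \gamma^{i-1} - \gamma^{i-2} - 1$, whence $a_i - r_i \geq \gamma^{i-2} + 1$.

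To finish, I would split on whether $\gamma^2 \geq d$ or $\gamma^2 < d$. In the first regime, Corollary \ref{cor:aiform} upgrades $a_2 = \gamma + \lfloor \gamma^2/d\rfloor \geq \gamma + 1$, while condition (\ref{Dcond}) still forces $r_2 \leq \gamma - 2$; then $a_2 - r_2 \geq 3$, and taking $k = 2$ yields $c(D) \geq D(\beta+3)/\beta^2$, with $\geq D(\beta+1)/(\beta(\beta-1))$ reducing to $\beta \geq 3$. In the second regime, the central estimate supplies $a_2 - r_2 \geq 2$ and $a_3 - r_3 \geq \gamma + 1 \geq 3$; taking $k = 3$ yields $c(D) \geq D(\beta^2 + 2\beta + 3)/\beta^3$, and again the required inequality reduces to $\beta \geq 3$.

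The main obstacle is that the naive two-digit bound using only $a_1 = 1$ and $a_2 - r_2 \geq 2$ \emph{just barely} fails the target, missing by $2/[\beta^2(\beta-1)]$. The fix is to exploit that condition (\ref{Dcond}), multiplied through, is a strict inequality between rationals which then sharpens by one as an integer bound on each $r_i$; this one-unit margin is amplified to $\gamma^{i-2} + 1$ as $i$ grows, so pulling in the third digit closes the deficit in the $\gamma^2 < d$ regime, while the $a_2$-boost supplied by Corollary \ref{cor:aiform} handles the complementary $\gamma^2 \geq d$ regime.
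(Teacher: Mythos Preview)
Your proof is correct and follows essentially the same route as the paper's: both use the partial-sum cutoff (your telescoped identity is the paper's Lemma~\ref{poscutofflemma}), handle condition~\eqref{dcond} via $a_1\geq 2$, and for condition~\eqref{Dcond} split into a two-digit case ($a_2-r_2\geq 3$) and a three-digit case ($a_2-r_2=2$, $a_3-r_3\geq 3$), with your split on $\gamma^2\gtrless d$ being an equivalent reorganization. One small imprecision: in your ``central estimate'' you assert that $\gamma^{i-1}<d$ forces $a_i=\gamma^{i-1}$ via Lemma~\ref{lemma:aiform}, but that lemma actually requires $\gamma^i<d$ for the equality; what you have under $\gamma^{i-1}<d$ is only $a_i\geq\gamma^{i-1}$ (Corollary~\ref{cor:aiform}), which is all you use and all you need, so the argument stands.
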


\begin{rk}
Computationally, it appears that the optimal statement is as follows. If $D < d \leq b - 2$, then $c(D) \geq   \frac{D(\beta + 1)}{\beta (\beta - 1)}$ if and only if at least one of the following is true: ($1^\prime$) $(b-d)^2 > b - 1$, or (2) $D < d\,(1 - \frac{1}{b-d})$. Note that Condition \eqref{dcond} above implies condition ($1^\prime$) here. Here we only prove Proposition \ref{contDlemma} as stated. 
\end{rk}

Before proving Proposition \ref{contDlemma}, some preparatory lemmas.
\begin{lemma}\label{lemma:r2ineq}
Under the assumption $d > \frac{b}{2}$, condition \eqref{Dcond} from Proposition \ref{contDlemma} is equivalent to the inequality $r_2 < b - d - 1.$
\end{lemma}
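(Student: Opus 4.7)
The proof is a direct computation using the closed-form expression for $r_2$ given by Lemma~\ref{r3lemma}. The plan is to specialize that formula to $k=2$, use the hypothesis $d > \tfrac{b}{2}$ to simplify one of the floor terms, and then unwind the resulting inequality algebraically. I expect no real obstacle; the only thing to be careful about is that the comparison $\lfloor x \rfloor < n$ for integer $n$ is equivalent to $x < n$, which is what lets us pass between the floor version of the inequality and condition~(\ref{Dcond}).

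\textbf{Sketch.} By the second form of Lemma~\ref{r3lemma} applied with $k=2$,
$$r_2 \;=\; \left\lfloor \frac{D(b-d)}{d} \right\rfloor - D\left\lfloor \frac{b-d}{d} \right\rfloor.$$
The standing assumption $d > \tfrac{b}{2}$ gives $0 \le b-d < d$, so $\left\lfloor (b-d)/d \right\rfloor = 0$ and hence
$$r_2 \;=\; \left\lfloor \frac{D(b-d)}{d} \right\rfloor.$$
Since $b-d-1$ is a (nonnegative) integer, the inequality $r_2 < b-d-1$ is equivalent to $\tfrac{D(b-d)}{d} < b-d-1$, which, after multiplying through by $d/(b-d) > 0$, is equivalent to
$$D \;<\; \frac{d(b-d-1)}{b-d} \;=\; d\Bigl(1 - \frac{1}{b-d}\Bigr),$$
which is exactly condition~(\ref{Dcond}). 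This proves the lemma.
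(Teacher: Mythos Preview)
Your proof is correct and follows essentially the same route as the paper: the paper invokes Corollary~\ref{r2cor} (a direct consequence of Lemma~\ref{r3lemma}) under the hypothesis $d>\tfrac{b}{2}$ to obtain $r_2=\lfloor D(b-d)/d\rfloor$, and then performs the identical floor-to-inequality unwinding you give. The only cosmetic difference is that you cite Lemma~\ref{r3lemma} directly and explicitly dispose of the second floor term via $d>\tfrac{b}{2}$, whereas the paper packages that step into the corollary.
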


\begin{proof}
Apply Corollary \ref{r2cor} for $k = 0$ to deduce that $r_2 = \left\lfloor \frac{D(b-d)}{d} \right\rfloor$. Then $r_2 < b - d - 1$ if and only if $\frac{D(b-d)}{d} < b - d - 1$ if and only if $D < \frac{d(b - d - 1)}{b-d}$, which is condition \eqref{Dcond}, as desired. 
\end{proof}

As before, let $\rite_b(\frac{1}{d}) = 0.a_1 a_2 \ldots$,  $\rite_b(\frac{D}{d}) = 0.e_1 e_2 \ldots$, and let $\underline{r} = 0.r_1 r_2 \ldots$ be the carry digits as in section \ref{sec:Dcarryword}. Using equation \eqref{eq:Dmult} and the partial-sum cutoffs $c(D) \geq \reed_\beta(0.e_1 \ldots e_k) = \sum_{i = 1}^k e_i \beta^{-i}$ we get the following partial-sum versions of equation \eqref{eq:cDc1}:

\begin{lemma}\label{poscutofflemma}
For any $k \geq 1$, the quantity $c(D)$ satisfies the following inequality:
$$c(D) \geq \frac{D \sum_{i = 1}^k (a_i - r_i)\beta^{k-i} + r_{k + 1}}{\beta^{k}}.$$
\end{lemma}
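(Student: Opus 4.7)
\textbf{Proof proposal for Lemma~\ref{poscutofflemma}.} The plan is a direct computation starting from the trivial truncation bound, substituting the relation from equation \eqref{eq:Dmult}, and then telescoping the resulting sums using $\beta - b = -D$ and $r_1 = 0$.

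First, since $c(D) = \reed_\beta(0.e_1 e_2 \cdots)$ and all digits $e_i \geq 0$, the partial-sum cutoff gives
$$c(D) \;\geq\; \sum_{i=1}^{k} e_i \, \beta^{-i}.$$
Now substitute $e_i = D a_i + r_{i+1} - b r_i$ from \eqref{eq:Dmult} and split the sum:
$$c(D) \;\geq\; \sum_{i=1}^{k} D a_i \beta^{-i} \;-\; \sum_{i=1}^{k} b r_i \beta^{-i} \;+\; \sum_{i=1}^{k} r_{i+1}\beta^{-i}.$$

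The hoped-for identity will emerge by shifting the index in the last sum and combining it with the middle sum. Write $\sum_{i=1}^{k} r_{i+1}\beta^{-i} = \beta \sum_{j=2}^{k+1} r_j \beta^{-j}$, so
$$\sum_{i=1}^{k} r_{i+1}\beta^{-i} - \sum_{i=1}^{k} b r_i \beta^{-i} \;=\; -b r_1 \beta^{-1} + \sum_{i=2}^{k}(\beta - b) r_i \beta^{-i} + \beta r_{k+1} \beta^{-(k+1)}.$$
Using $r_1 = 0$ and $\beta - b = -D$, this simplifies to $-D \sum_{i=2}^{k} r_i \beta^{-i} + r_{k+1}\beta^{-k}$, and we may extend the sum to start at $i = 1$ since $r_1 = 0$. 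Plugging back in gives
$$c(D) \;\geq\; D\sum_{i=1}^{k}(a_i - r_i)\beta^{-i} + r_{k+1}\beta^{-k}.$$
Multiplying through by $\beta^{k}/\beta^{k}$ yields exactly the claimed inequality.

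There is no real obstacle here; the whole proof is an exercise in index shifting, so the only thing to double-check is the boundary term $r_1$ (which vanishes because $D/d < 1$) and the crucial identity $\beta - b = -D$, which is how the definition $\beta = b - D$ was set up precisely to make this telescoping work.
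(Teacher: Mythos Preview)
Your proof is correct and is exactly the computation the paper has in mind: it uses the partial-sum cutoff $c(D) \geq \sum_{i=1}^{k} e_i \beta^{-i}$ together with equation~\eqref{eq:Dmult}, and the telescoping via $\beta - b = -D$ and $r_1 = 0$ is precisely the intended manipulation. The paper only indicates this in one sentence before stating the lemma, so you have simply written out the details.
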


\begin{cor}\label{lemma:a2r23}
Any of the following conditions are sufficient to guarantee $c(D) \geq \frac{D (\beta + 1)}{\beta(\beta - 1)}$.
\begin{enumerate}
\item\label{pa12} $\beta \geq 3$ and $a_1 \geq 2;$
\item\label{pr2} $r_2 \geq \frac{2D}{\beta - 1};$ 
\item\label{pa2r2} $\beta \geq 3$ and $a_2- r_ 2 \geq 3;$ 
\item\label{pr3} $a_2 - r_2 = 2$ and $r_3 \geq \frac{2D}{\beta - 1};$
\item\label{pa3r3}$\beta \geq 3$ and $a_2 - r_2 = 2$ and $a_3 - r_3 \geq 3.$
\end{enumerate}
\end{cor}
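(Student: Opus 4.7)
The strategy is to apply Lemma \ref{poscutofflemma} with a small value of $k$ (either $1$, $2$, or $3$, depending on which clause we are verifying), combined with the trivial facts $a_1 \geq 1$ and $r_1 = 0$. In each case, after plugging in the hypothesis, the required inequality $c(D) \geq \frac{D(\beta+1)}{\beta(\beta-1)}$ reduces to a short piece of algebra in $\beta$ (and sometimes $D$).

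For clauses \eqref{pa12} and \eqref{pr2}, I would take $k = 1$, which gives $c(D) \geq \frac{D a_1 + r_2}{\beta}$ (using $r_1 = 0$). In clause \eqref{pa12}, the hypothesis $a_1 \geq 2$ yields $c(D) \geq \frac{2D}{\beta}$, and $\frac{2D}{\beta} \geq \frac{D(\beta+1)}{\beta(\beta-1)}$ is equivalent to $\beta \geq 3$. In clause \eqref{pr2}, use just $a_1 \geq 1$ to obtain $c(D) \geq \frac{D + r_2}{\beta}$; clearing denominators, the desired inequality becomes $r_2 (\beta - 1) \geq 2 D$, which is exactly the hypothesis.

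For clauses \eqref{pa2r2} and \eqref{pr3}, I would use $k = 2$, giving $c(D) \geq \frac{D[a_1 \beta + (a_2 - r_2)] + r_3}{\beta^2}$, and then substitute $a_1 \geq 1$. In clause \eqref{pa2r2}, the hypothesis $a_2 - r_2 \geq 3$ produces $c(D) \geq \frac{D(\beta + 3)}{\beta^2}$, and after cross-multiplying the target inequality collapses to $(\beta + 3)(\beta - 1) \geq \beta(\beta + 1)$, i.e.\ $\beta \geq 3$. In clause \eqref{pr3}, taking $a_2 - r_2 = 2$ and keeping the $r_3$ term yields $c(D) \geq \frac{D(\beta + 2) + r_3}{\beta^2}$, and the claim reduces (after cancellation of the $D \beta^2 + D \beta$ terms on both sides) to $r_3 (\beta - 1) \geq 2 D$, which is the assumption.

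Finally, for clause \eqref{pa3r3}, I would take $k = 3$. Using $a_1 \geq 1$, $a_2 - r_2 = 2$, $a_3 - r_3 \geq 3$, and dropping the nonnegative $r_4$ contribution, Lemma \ref{poscutofflemma} gives $c(D) \geq \frac{D(\beta^2 + 2\beta + 3)}{\beta^3}$. The target inequality becomes $(\beta^2 + 2 \beta + 3)(\beta - 1) \geq \beta^2 (\beta + 1)$, whose left side expands to $\beta^3 + \beta^2 + \beta - 3$ and whose right side is $\beta^3 + \beta^2$, so the inequality is equivalent to $\beta \geq 3$. There is no real obstacle in any of the five clauses; the only thing to watch is making sure to keep the ``extra'' $r_{k+1}$ term in Lemma \ref{poscutofflemma} exactly when it is needed (clauses \eqref{pr2} and \eqref{pr3}) and dropping it otherwise.
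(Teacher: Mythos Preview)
Your proposal is correct and follows essentially the same approach as the paper's own proof: apply Lemma~\ref{poscutofflemma} with $k=1$ for clauses \eqref{pa12} and \eqref{pr2}, with $k=2$ for clauses \eqref{pa2r2} and \eqref{pr3}, and with $k=3$ for clause \eqref{pa3r3}, each time using $r_1=0$, $a_1\geq 1$, and keeping or dropping the $r_{k+1}$ term exactly as you describe. The algebraic reductions you indicate match the paper's computations line for line.
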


\begin{proof}[Proof (of Corollary \ref{lemma:a2r23})]
We use Lemma \ref{poscutofflemma} for each specified $k$. Recall that $r_1 = 0$. 
\begin{enumerate} 
\item $k = 1$, use estimate $r_2 \geq 0$. We have  
$c(D) \geq \frac{2D}{\beta} \geq \frac{D (\beta + 1)}{\beta(\beta - 1)},$
since $2 \geq \frac{\beta + 1}{\beta - 1}$ for $\beta \geq 3$. 
\item $k = 1$, use estimate $a_1 \geq 1$: 
$$c(D) \geq \frac{D + r_2}{\beta} \geq \frac{D + \frac{2D}{\beta - 1}}{\beta} = \frac{D (\beta + 1)}{\beta(\beta - 1)}.$$
\item $k = 2$, use estimate $a_1 \geq 1$ and $r_3 \geq 0$: 
\begin{align*}
c(D) &\geq \frac{\beta(D + r_2) + (D a_2 - b r_2)}{\beta^2} = \frac{D(\beta + a_2 -r_2)}{\beta^2}.
\end{align*} This last being greater than $\frac{D(\beta + 1)}{\beta(\beta - 1)}$ is equivalent to $(\beta + a_2 - r_2)(\beta - 1) \geq \beta(\beta + 1)$, or $a_2 - r_2 \geq \frac{2\beta}{\beta - 1}$. For $\beta \geq 3$, this is guaranteed by $a_2 - r_2 \geq 3$.
\item $k = 2$, use estimate $a_1 \geq 1$: 
$$\frac{c(D)}{D} \geq \frac{\beta + (a_2 - r_2)  + \frac{r_3}{D}}{\beta^2}
\geq \frac{\beta + 2 + \frac{2}{\beta - 1}}{\beta^2}
= \frac{\beta + 1}{\beta (\beta - 1)}.$$
\item $k = 3$, use estimate $a_1 \geq 1$ and $r_4 \geq 0$: 
$$\frac{c(D)}{D} \geq \frac{\beta^2 + (a_2 - r_2)\beta + (a_3 - r_3)}{\beta^3}
\geq \frac{\beta^2 + 2\beta + 3}{\beta^3}
= \frac{\beta + 1}{\beta (\beta - 1)} + \frac{\beta - 3}{\beta^3(\beta - 1)}.$$

\end{enumerate}

\end{proof}

\begin{proof}[Proof of Proposition \ref{contDlemma}]
 Note that the assumptions $D < d \leq b - 2$ guarantee that $\beta \geq 3$. 

If condition \eqref{dcond} holds, then $a_1 \geq 2$ (Lemma \ref{1dstructurelemma}\eqref{0.2}) so that Lemma \ref{lemma:a2r23}\eqref{pa12} gives what we want. 
If condition \eqref{dcond} fails, but condition \eqref{Dcond} holds, then by Lemma \ref{lemma:r2ineq}, we have $r_2 \leq b - d - 2$. Moreover, from Lemma \ref{lemma:eiform} for $D = 1$ and $k = 1$, we know that $a_2 \geq b - d$. If either inequality is strict, Lemma \ref{lemma:a2r23}\eqref{pa2r2} gives us the desired inequality. Therefore it remains to consider the case $r_2 = b - d - 2$ (so that $\frac{d(b - d - 2)}{b-d} \leq D < \frac{d(b - d - 1)}{b-d}$) and $a_2 = b - d$ (so that $(b - d)^2 < d$\footnote{Incidentally this implies the failure of condition ($1'$), which should conjecturally replace condition \eqref{dcond} as noted in the remark after the statement of Proposition~\ref{contDlemma}.}.) 

We now estimate the third digits. By Corollary \ref{r2cor} and Lemma \ref{lemma:eiform}, we have $r_3 = \left \lfloor \frac{D (b-d)^2}{d} \right \rfloor$ and $a_3 \geq (b-d)^2$. Condition \eqref{Dcond} implies that $r_3 \leq \frac{D (b-d)^2}{d} < (b-d)(b-d - 1)$, so that $$a_3 - r_3 > (b-d)^2 - (b-d)(b-d-1) = b-d \geq 3,$$
so that the desired inequality holds by Lemma \ref{lemma:a2r23}\eqref{pa3r3}.

\end{proof}

\section{The nilgrowth witness}\label{nilgrowthsec}
Let $b \geq d \geq D \geq 1$ be integers subject to the conditions $b \geq 2$ and $\beta : = b - D \geq 2$, as usual.  
In this section we exhibit a $(b,d, D)$-nilgrowth witness function and complete the proof of Theorem~\ref{specialngtthm}.

Recall from subsection \ref{cdefsec} that 
$c_{b, \beta}^d: \Q_{\geq 0} \to \Q_{\geq 0}$ is the function defined via $$c_{b, \beta}^d(n) : = c_{b, \beta}(\textstyle \frac{n}{d}).$$
Also define the integer constant $M_{b, \beta}^d := \beta^{s_b(1/d)}\big(\beta^{t_b(1/d)} - 1\big)$.  Here $c_{b, \beta}$ is the $(b, \beta)$-content function, first defined in section \ref{contentdefsec}, and $s_b$ and $t_b$ count the number of digits after the decimal point of the pre-period and the period, respectively, of base-$b$ expansions; see definition before equation \eqref{eq:ratq}.

The following theorem, combined with Corollary \ref{cor:nilgrowthinduction}, will prove Theorem \ref{specialngtthm}, completing, in turn, the proof of Theorem \ref{ngtthm}.  

\begin{thm} \label{witnessthm}
If $b - d \leq 1$, or if $D \leq \frac{b}{2}$, then the function $c_{b, b - D}^d$ is a $(b, d, D)$-nilgrowth witness. 

\end{thm}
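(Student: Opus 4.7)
The plan is to verify each of the four properties (discreteness, growth, base, step) required of a $(b, d, D)$-nilgrowth witness in turn. The first two are essentially immediate from the structural results in Section \ref{contsec}: discreteness follows from Lemma \ref{lemma:contentcompute}, which implies $c_{b,\beta}^d(\N) \subset \frac{1}{M_{b,\beta}^d}\N$ because the base-$b$ expansion of any $n/d$ is eventually periodic with pre-period and period bounded by those of $1/d$; growth follows by applying Lemma \ref{contentgrowthlemma} to $c_{b,\beta}(n/d)$, yielding $c(n) \asymp n^{\log_b \beta}$, which is exactly $n^{\log(b-D)/\log b}$.

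For the base property, I would split on the two hypotheses. In the case $b - d \leq 1$, Lemma \ref{easycontlemma} provides the explicit formula $c(i) = i/\beta$ for $0 \leq i < d$, so strict monotonicity on $\{0, \ldots, d-1\}$ is immediate, and $c(d-D) < c(d)$ follows from a direct computation (treating separately the subcases $d=b$ and $d=b-1$). In the case $D \leq b/2$, I would apply Lemma \ref{lemma:carrydigits} together with Lemma \ref{lemma:scalingcontent} to rewrite the one-step difference as $c(i+1) - c(i) = c(1) - D \cdot \reed_\beta(r_b(i/d, 1/d))$, then use the lower bound $c(1) \geq \frac{D}{\beta(\beta-1)}$ from Corollary \ref{cor:cont1} and the trivial upper bound $\reed_\beta(r_b(i/d, 1/d)) \leq 1/(\beta-1)$ on the carry word (whose digits are at most~$1$) to force this difference to be positive; the inequality $c(d-D) < c(d)$ is the integer-level version and follows via Proposition \ref{contDlemma}.

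The main obstacle is the step property, which requires the finest use of the content machinery. The approach is to rewrite both sides via Lemmas \ref{lemma:carrydigits} and \ref{lemma:scalingcontent}: for instance,
$$c(n) - c(n - ib^k) = \beta^k c(i) - D \cdot \reed_\beta\bigl(r_b(\tfrac{n - ib^k}{d}, \tfrac{ib^k}{d})\bigr),$$
and similarly for $m$, reducing the desired inequality to
$$\beta^k\bigl(c(i) - c(j)\bigr) \geq D\bigl[\reed_\beta(R_n) - \reed_\beta(R_m)\bigr],$$
where $R_n, R_m$ are the carry words. The two cases then get separate treatment. When $i - j \geq D$, I would combine the lower bounds on $c(i) - c(j)$ coming from Corollary \ref{cor:cont1} and Proposition \ref{contDlemma} with the uniform bound $|\reed_\beta(R_n) - \reed_\beta(R_m)| \leq 1/(\beta-1)$ to close the inequality, using that $D \leq b/2$ ensures enough slack. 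The hardest subcase will be $(i, j) = (d, d)$, where $c(i) - c(j) = 0$ and the entire inequality must come from a carry-word asymmetry: the constraint $n < d b^{k+1}$ forces the relevant high-order digit of $n/d$ to be at most $\lfloor b \rfloor$-ish while $m$ has no such cap, so the carries in adding $d b^k/d$ back into $(n - d b^k)/d$ are strictly smaller than those for $m$. Making this precise will require invoking the detailed carry-word formulas from Corollary \ref{r2cor} and Lemma \ref{r3lemma} together with the structural description of $\rite_b(1/d)$ from Lemma \ref{1dstructurelemma}, and handling separately the $b - d \leq 1$ regime where the digits of $1/d$ are especially rigid.
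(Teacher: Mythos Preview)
Your framework matches the paper's, and discreteness and growth are fine. But there are concrete gaps in the base and step arguments.

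For the base property with $D \leq b/2$ and $d \leq b-2$: your carry-word bound $\reed_\beta(r_b(i/d,1/d)) \leq 1/(\beta-1)$, combined with Corollary~\ref{cor:cont1}'s $c(1) \geq D/(\beta(\beta-1))$, yields a \emph{negative} lower bound on $c(i+1)-c(i)$, so the argument as written does not close. The missing observation is that since $(i+1)/d < 1$, the first post-radix carry digit vanishes, so the carry word is bounded by $0.01^\infty$ and $\reed_\beta \leq 1/(\beta(\beta-1))$; with this sharper bound the inequality follows. The same refinement (one extra leading zero in the carry word) is needed for the $R(i,j)$ term in the step property.

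For the step property, you have the $(d,d)$ case exactly backwards: it is the \emph{easiest} case, not the hardest. After scaling by $b^{-k}$ via Lemma~\ref{lemma:scalingcontent}, the relevant addition is $(n-d)/d + 1$; since $d/d = 1$ is an integer there are no fractional carries, and since $n/d < b$ the integer part of $(n-d)/d$ is at most $b-2$, so there is no integer carry either. Hence $R(n,d) = 0$ outright, and the desired inequality reads $0 \geq -R(m,d)$, which is trivial. No asymmetry argument and no appeal to Corollary~\ref{r2cor}, Lemma~\ref{r3lemma}, or Lemma~\ref{1dstructurelemma} is needed. The same observation handles every pair $(d,j)$ with $j \leq d-D$, reducing to $c(d) \geq c(j)$, which is the base property. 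Separately, for $i < d$ your uniform bound on the carry term is too loose when $d = b$: there $i/b$ has a single base-$b$ digit, so $R(n,i) \leq D/\beta$, and you must use this since $c(i-j)$ can equal $D/\beta$ exactly. The paper closes this by a three-way split on $d = b$, $d = b-1$, and $d \leq b-2$, each with its own carry-word bound.
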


We begin the proof of Theorem \ref{witnessthm}. Recall from subsection \ref{ngtpf} that a $(b, d, D)$-nilgrowth witness must satisfy four properties: discreteness, growth, base, and step. We establish the first two immediately. 

\begin{lemma}[Discreteness property]
For any $n \in \N$, we have $M_{b, \beta}^d\, c_{b, \beta}^d(n) \in \N$. 
\end{lemma}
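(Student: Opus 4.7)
The plan is to read off the value $c_{b,\beta}^d(n) = c_{b,\beta}(n/d)$ from the eventually periodic base-$b$ expansion of $n/d$, and then check that the multiplier $M_{b,\beta}^d$ clears exactly the right denominators.

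First I would set $s := s_b(n/d)$ and $t := t_b(n/d)$. Since $n/d$ lies in $(1/d)\N$, Lemma \ref{integralitylemma} gives $s \leq s_b(1/d)$ and $t \mid t_b(1/d)$. Using \eqref{eq:ratq}, write
$$\frac{n}{d} \;=\; N \;+\; \frac{u}{b^{s}} \;+\; \frac{m}{b^{s}(b^{t} - 1)}$$
with $N, u, m \in \N$. Applying Lemma \ref{lemma:contentcompute} then yields
$$c_{b,\beta}^d(n) \;=\; c_{b, \beta}(N) \;+\; \frac{c_{b, \beta}(u)}{\beta^{s}} \;+\; \frac{c_{b, \beta}(m)}{\beta^{s}(\beta^{t}-1)}.$$
The three quantities $c_{b,\beta}(N)$, $c_{b,\beta}(u)$, $c_{b,\beta}(m)$ are each nonnegative integers, because the $(b,\beta)$-content of a nonnegative integer $k$ is obtained by reading a finite word over $\mathcal{D}(b)$ in base $\beta$, and hence lies in $\N$.

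Multiplying through by $M_{b,\beta}^d = \beta^{s_b(1/d)}\big(\beta^{t_b(1/d)} - 1\big)$, the first summand contributes an integer; the second is multiplied by $\beta^{s_b(1/d)-s}\big(\beta^{t_b(1/d)}-1\big)$, which is an integer because $s \leq s_b(1/d)$; and for the third, since $t \mid t_b(1/d)$, the polynomial identity $\beta^{t_b(1/d)} - 1 = (\beta^{t} - 1)\big(\beta^{t_b(1/d)-t} + \beta^{t_b(1/d) - 2t} + \cdots + 1\big)$ shows that $\beta^{t} - 1$ divides $\beta^{t_b(1/d)} - 1$, while $\beta^{s}$ divides $\beta^{s_b(1/d)}$ as before, so the third summand also contributes an integer. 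There is no real obstacle here: this is a bookkeeping argument that follows directly from Lemmas \ref{integralitylemma} and \ref{lemma:contentcompute} together with the observation that $c_{b,\beta}$ sends $\N$ to $\N$.
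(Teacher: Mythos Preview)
Your proof is correct and is essentially the same argument as the paper's: the paper simply cites Lemmas \ref{integralitylemma} and \ref{lemma:contentcompute} without writing out the details, and you have unpacked exactly what those lemmas yield, including the divisibility $(\beta^{t}-1)\mid(\beta^{t_b(1/d)}-1)$ coming from $t\mid t_b(1/d)$.
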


\begin{proof}
It suffices to see that $ \beta^{s_b(1/d)}\big(\beta^{t_b(1/d)} - 1\big) c_{b, \beta}(n)$ is an integer for $n \in \frac{1}{d} \N$. For $n = \frac{1}{d}$ this follows from Lemma \ref{lemma:contentcompute}, and for general $n \in \frac{1}{d}\N$ from Lemmas \ref{lemma:contentcompute} and \ref{integralitylemma}.
\end{proof} 

\begin{lemma}[Growth property]
We have $c_{b, \beta}^d(n) \asymp n^{\log_b \beta}$. 
\end{lemma}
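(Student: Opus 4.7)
The plan is to reduce the growth estimate for $c_{b,\beta}^d(n) = c_{b,\beta}(n/d)$ to the integer case, which is already handled by Lemma \ref{contentgrowthlemma}. Write $n = qd + r$ with $0 \leq r < d$, so that $n/d = q + r/d$ with $q = \lfloor n/d \rfloor$. Since $r \in \{0, 1, \dots, d-1\}$, the fractional part $r/d$ takes only finitely many values as $n$ varies.

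First I would apply Lemma \ref{lemma:contentcompute} to the decomposition of $n/d$ as integer part plus periodic remainder: writing $r/d = u/b^{s} + m/\bigl(b^{s}(b^{t}-1)\bigr)$ in the normalized form of equation \eqref{eq:ratq}, the lemma gives
\begin{equation*}
c_{b,\beta}^d(n) = c_{b,\beta}(q) + \frac{c_{b,\beta}(u)}{\beta^{s}} + \frac{c_{b,\beta}(m)}{\beta^{s}(\beta^{t}-1)}.
\end{equation*}
The last two summands depend only on $r$, and so lie in a finite set of nonnegative rationals (one value for each $r \in \{0, \dots, d-1\}$). Hence there is a constant $C = C(b,\beta,d)$ such that
\begin{equation*}
c_{b,\beta}(q) \; \leq \; c_{b,\beta}^d(n) \; \leq \; c_{b,\beta}(q) + C.
\end{equation*}

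Next I would invoke Lemma \ref{contentgrowthlemma}, which gives $c_{b,\beta}(q) \asymp q^{\log_b \beta}$ as $q \to \infty$, with explicit constants. Since $q = \lfloor n/d \rfloor$ satisfies $q \asymp n$ (for $n \geq d$), we get $c_{b,\beta}(q) \asymp n^{\log_b \beta}$. The additive constant $C$ is absorbed in the $\asymp$-notation (since $\log_b \beta > 0$ in the regimes of interest where $\beta \geq 2$, the integer-part term grows to infinity and dominates). For the lower bound at small $n$ one just notes that $c_{b,\beta}^d(n) \geq 0$ for $n \geq 0$ and the $\asymp$ relation concerns large $n$; alternatively one checks the finitely many small cases directly.

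The main step is essentially bookkeeping via Lemma \ref{lemma:contentcompute}: once one observes that dividing by $d$ only contributes a bounded periodic tail to the base-$b$ expansion, the desired asymptotic follows from the integer-case estimate. There is no substantive obstacle; the only minor subtlety is verifying that the "periodic tail" contribution from the fractional part of $n/d$ is uniformly bounded in $n$, which is immediate because $r \in \{0, 1, \dots, d-1\}$ is a finite set.
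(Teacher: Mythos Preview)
Your proposal is correct. Both you and the paper reduce to Lemma~\ref{contentgrowthlemma}; the paper's proof is simply the one-word citation ``Lemma~\ref{contentgrowthlemma},'' since the bounds there are proved for any real $n \geq 1$ (the argument uses only $\ell = 1 + \lfloor \log_b n\rfloor$ and never needs integrality), so one applies it directly to $n/d$ and absorbs the factor $d^{-\log_b\beta}$ into the $\asymp$ constants. Your detour through Lemma~\ref{lemma:contentcompute}, splitting off the bounded fractional-part contribution and applying the growth estimate only to the integer $\lfloor n/d\rfloor$, is a valid but unnecessary extra step.
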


\begin{proof}
Lemma \ref{contentgrowthlemma}. 
\end{proof}

It remains to establish the base property and the step property.

 For $m, n \in \Q_{\geq 0}$ with $m \geq n$, set $$R(m, n) : = R_{b, \beta}^d(m, n) := D \reed_\beta r_b(\textstyle \frac{m-n}{d}, \frac{n}{d}).$$
Here $r_b$ is the carry-digit word, as in section \ref{sec:carryword}.  We then have, for $m, n$ as above 
\begin{equation}\label{eq:carryr}
c(n) - c(n - m) = c(m) - R(n,m).
\end{equation}
This is just a restatement of Lemma \ref{lemma:carrydigits}, in the form in which we will use it below. 

We now use the technical results of section \ref{sec:ineqs} to prove that our candidate nilgrowth witness satisfies the base property and the step property. 

\begin{lemma}[Base property]\label{lemma:base}\leavevmode
Suppose that $d = b$ or $d = b - 1$ or $D \leq \frac{b}{2}$. Then the following inequalities hold: 
\begin{enumerate}
\item \label{cDd} $c(d - D) \leq c(d)$;
\item \label{baseincreasing} $0 = c(0) < c(1) < \ldots < c(d -1)$.
\end{enumerate}
\end{lemma}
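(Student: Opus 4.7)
The plan is to handle the three alternative hypotheses in turn. Throughout, both inequalities follow the same template: rewriting Lemma \ref{lemma:carrydigits} as $c(n + m) - c(n) = c(m) - R(n + m, m)$ (equation \eqref{eq:carryr}) transforms (1) into the single inequality $c(D) \geq R(d, D)$ and (2) into the chain $c(1) > R(k, 1)$ for $k = 1, \ldots, d - 1$, where $R(k,1) = D\pi_\beta r_b((k-1)/d, 1/d)$. When $d = b$ or $d = b - 1$, the explicit formula $c(i) = i/(d - D)$ from Lemma \ref{easycontlemma} instantly yields (2) as a strict arithmetic progression and (1) as the equality $c(d - D) = (d-D)/(d-D) = 1 = c(d)$.

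For the main case $d \leq b - 2$ and $D \leq b/2$, part (2) reduces to bounding $R(k, 1)$ by $c(1)$ for each $1 \leq k \leq d - 1$. The restriction $k \leq d - 1$ ensures $(k-1)/d + 1/d = k/d$ is a proper fraction, killing the integer-position carry $r_0$ and keeping every $r_i \in \{0, 1\}$, which yields the coarse bound $R(k, 1) \leq D/(\beta - 1)$. Combined with Proposition \ref{cont1lemma}'s bound $c(1) \geq (\beta + 1)/(\beta(\beta - 1))$, this closes $D = 1$ directly; for $D \geq 2$ I would perform a position-by-position digit comparison exploiting the structure of $1/d$'s base-$b$ expansion from Lemma \ref{lemma:aiform} and Corollary \ref{cor:aiform}, showing that each carry $r_i = 1$ can be matched against a corresponding positive digit $a_i$ of $1/d$.

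For part (1), the carry word for the addition $(d-D)/d + D/d = 1$ is governed by the base-$b$ complementation identity $\rite_b((d-D)/d) = 0.(b-1-e_1)(b-1-e_2)\ldots$ (up to a terminating-digit adjustment when $D/d \in \Z[1/b]$), which evaluates the weighted carry as $R(d, D) = D/(\beta - 1)$ when $D/d$'s base-$b$ expansion is non-terminating, and strictly less, namely $D(1 - \beta^{-s})/(\beta - 1)$, when it terminates after $s$ digits. Proposition \ref{contDlemma} (condition \ref{dcond}) then supplies $c(D) \geq D(\beta + 1)/(\beta(\beta - 1)) > D/(\beta - 1) \geq R(d, D)$ whenever $d \leq b/2$, closing the argument in that sub-case.

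The main obstacle is the residual sub-case $b/2 < d \leq b - 2$ with $D \leq b/2$, where neither hypothesis of Proposition \ref{contDlemma} is guaranteed and the content bound from Corollary \ref{cor:cont1} alone is too weak. Here I would exploit the strict slack in $R(d, D)$ available in the terminating case, and in the non-terminating case use the pure complement identity $c(D) + c(d - D) = (b-1)/(\beta - 1)$, which makes the desired $c(d - D) \leq 1$ precisely equivalent to $c(D) \geq D/(\beta - 1)$; this last inequality I would establish through a digit-by-digit analysis of the $e_i$ via Lemma \ref{lemma:eiform}, treating by hand the finitely many small cases (analogous to the exceptional small-base cases noted after Proposition \ref{cont1lemma}). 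This intricate bookkeeping parallels and extends the analysis already carried out in Section \ref{sec:ineqs}.
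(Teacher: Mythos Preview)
Your overall strategy matches the paper's: reduce both parts to comparing $c(m)$ against $R(n,m)$ via equation \eqref{eq:carryr}, dispatch $d\in\{b-1,b\}$ with the explicit formula of Lemma \ref{easycontlemma}, and treat $d\leq b-2$ using the estimates of Section \ref{sec:ineqs}. However, you miss one observation that makes part \eqref{baseincreasing} immediate, and you substantially overcomplicate part \eqref{cDd}.

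For part \eqref{baseincreasing} with $d\leq b-2$: the carry word $r_b\big(\tfrac{i-1}{d},\tfrac{1}{d}\big)$ is bounded not merely by $0.1^\infty$ but by $0.01^\infty$. You observed that $i/d<1$ kills the carry $r_0$ into the integer place; but then the defining relation \eqref{eq:carrydigit} at position $0$ reads $0+0+r_{-1}=0+b\cdot 0$, forcing $r_{-1}=0$ as well. Hence
\[
R(i,1)\ \le\ D\,\reed_\beta(0.01^\infty)\ =\ \frac{D}{\beta(\beta-1)},
\]
and Corollary \ref{cor:cont1} (which under $D\leq b/2$ gives $c(1)\geq \tfrac{D}{\beta(\beta-1)}$, in fact strictly since its proof yields $c(1)\geq \tfrac{\beta+1}{\beta(\beta-1)}$ and $\beta+1>D$) closes every $D$ at once. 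Your proposed ``position-by-position digit comparison'' for $D\geq 2$ is unnecessary, and as stated it is not a proof.

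For part \eqref{cDd} with $d\leq b-2$: the paper argues in one line. It uses the crude bound $R(d,D)\leq D\,\reed_\beta(0.1^\infty)=\tfrac{D}{\beta-1}$ and invokes Proposition \ref{contDlemma} to get $c(D)\geq \tfrac{D(\beta+1)}{\beta(\beta-1)}>\tfrac{D}{\beta-1}$. Your complementation identity, the terminating/non-terminating split, and the separate treatment of the sub-case $b/2<d\leq b-2$ are not pursued; the paper simply applies Proposition \ref{contDlemma} across the whole range $D<d\leq b-2$ under the standing hypothesis $D\leq b/2$, without further case division.
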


\begin{rk} Part \eqref{cDd} is in fact true without the assumption $D \leq \frac{b}{2}$, but we do not need this greater generality. Part \eqref{baseincreasing} above is not generally true if $D > \frac{b}{2}$. For example, for $(b, d, D) = (7, 5, 5)$, we have $c(2) = c(3) = 3$; and for $(b, d, D) = (11, 9, 7)$ we have $c(4) = \frac{334}{195} > \frac{316}{195} = c(5)$. The condition delineated here is certainly not optimal, however. 

\end{rk}

\begin{proof}\leavevmode
If $d = b$ or $d = b - 1$, then both statements are immediate from the formula in Lemma~\ref{easycontlemma}. (Note that $c(d)$ is always 1.)  Assume therefore that $d \leq b - 2$. 
\begin{enumerate}
\item If $d = D$, then the inequality is trivial; so assume $D < d$. By equation \eqref{eq:carryr}, we have $$\textstyle c(d) - c(d - D) = c(D) - R(d, D).$$
Certainly $r_b(\frac{D}{d}, \frac{d-D}{d})$ can be no greater than $0.1^\infty$. Therefore $$R(d, D) = D \reed_\beta r_b(\textstyle \frac{D}{d}, \frac{d-D}{d}) \leq D \reed_\beta(0.1^\infty) = \displaystyle \frac{D}{\beta - 1}.$$
On the other hand, by Proposition \ref{contDlemma}, we know that $c(D) \geq \frac{D (\beta + 1)}{\beta (\beta - 1)} > \frac{D}{\beta - 1}$.  Therefore $c(d) > c(d- D)$ (and in fact the inequality is strict). 
\item It suffices to show that, for $0 < i < d$, we have $c(i)  >  c(i - 1)$. By equation \eqref{eq:carryr} this is equivalent to the inequality $c(1) > R(i, 1)$. Since $i < d$, we know that $$R(i, 1) = D \reed_\beta r_b(\textstyle \frac{i-1}{d}, \frac{1}{d}) \leq D \reed_\beta(0.01^\infty) = \displaystyle \frac{D}{\beta (\beta - 1)}.$$
Now Corollary \ref{cor:cont1} completes the claim.
\end{enumerate}
\end{proof}

\begin{lemma}[Step property]\label{lemma:step}
Suppose $d = b$ or $d = b - 1$ or $D \leq \frac{b}{2}$. If $(i, j) \in \mathcal I$, and $n, m$ are integers with $db^k \leq n < db^{k + 1}$ and $j b^k \leq m$, then 
$$c(n) - c(n - i b^k) \geq c(m) - c(m - j b^k).$$
\end{lemma}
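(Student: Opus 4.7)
The plan is to reduce Lemma~\ref{lemma:step} to the case $k=0$ via a scaling/decomposition argument and then to settle the $k=0$ case by a direct case analysis using the content estimates of Section~\ref{sec:ineqs}.

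By equation~\eqref{eq:carryr} and Lemma~\ref{lemma:scalingcontent}, I will rewrite each side of the desired inequality as $c(n)-c(n-ib^k)=\beta^k c(i)-R(n,ib^k)$ and likewise for $m$, so that the step property becomes
\[\beta^k\bigl(c(i)-c(j)\bigr)\geq R(n,ib^k)-R(m,jb^k).\]
Writing $n=b^kn^{*}+n_0$ with $d\leq n^{*}<db$ and $0\leq n_0<b^k$, and similarly $m=b^km^{*}+m_0$ with $m^{*}\geq j$, I apply Lemma~\ref{lemma:carrydigits} to the triple sum $\tfrac{ib^k}{d}+\tfrac{b^k(n^{*}-i)}{d}+\tfrac{n_0}{d}=\tfrac{n}{d}$. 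The carry-associativity identity this yields lets me decompose $R(n,ib^k)=\beta^k R(n^{*},i)+D\Delta_n$, where $\Delta_n=\reed_\beta r_b(b^kn^{*}/d,\,n_0/d)-\reed_\beta r_b(b^k(n^{*}-i)/d,\,n_0/d)$, and analogously for $m$. The step inequality then factors into the $k=0$ version at $(n^{*},m^{*})$ together with control of the correction $D(\Delta_n-\Delta_m)$.

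For the $k=0$ case I would proceed by case analysis on $(i,j)\in\mathcal{I}$. The easiest case is $(i,j)=(d,d)$: a direct digit-level computation of $r_b(1,(n-d)/d)$ shows no carries arise—the integer digit of $(n-d)/d$ at position $0$ is at most $b-2$, and $1$ contributes nothing to the fractional positions—so $R(n,d)=0$, and since $R(m,d)\geq 0$ we are done. In fact the same digit-by-digit argument yields $R(n,db^k)=0$ directly for every $k\geq 0$, so this case of Lemma~\ref{lemma:step} bypasses the reduction entirely. For $(i,j)$ with $i-j\geq D$ and $j=0$ the inequality reduces to $c(i)\geq R(n,i)$; since $n/d<b$, carries in $r_b(i/d,(n-i)/d)$ can occur only at non-positive positions, giving the geometric bound $R(n,i)\leq D/(\beta-1)$, which combines with the lower bounds on $c(i)$ for $i\geq D$ from Corollary~\ref{cor:cont1} and Proposition~\ref{contDlemma} (in the regime $D\leq b/2$) or the explicit formula in Lemma~\ref{easycontlemma} (in the regime $d\in\{b,b-1\}$) to close the case. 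The remaining case $j\geq 1$ I plan to reduce to the $j=0$ case by telescoping using the base-increasing property already established in Lemma~\ref{lemma:base}\eqref{baseincreasing}.

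I expect the principal difficulty to lie in controlling the correction $D(\Delta_n-\Delta_m)$ in the reduction step: the carry words for adding $n_0/d$ to $b^kn^{*}/d$ versus to $b^k(n^{*}-i)/d$ interact non-trivially since the two large addends generally have different fractional parts (their difference $b^ki/d$ is rarely integral). I anticipate that bounding this correction will require either establishing the $k=0$ inequality with a margin large enough to absorb the correction, or analyzing carry propagation across position $k$ directly—exploiting the fact that $n_0/d<b^k/d$ confines the support of the new carry activity and that $b^ki/d$ has a controlled digit profile.
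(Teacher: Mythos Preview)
Your reduction from general $k$ to $k=0$ is where the proposal goes wrong. You decompose $n=b^kn^*+n_0$ with $n^*$ an \emph{integer}, and this forced integrality is what creates the correction terms $\Delta_n$, $\Delta_m$ that you then cannot control. The point you are missing is that nothing in the machinery requires integer arguments: the content function $c=c_{b,\beta}^d$ and the carry-digit word $r_b$ are defined for all nonnegative reals, and both scale cleanly under multiplication by $b^k$ (Lemma~\ref{lemma:scalingcontent}, and the obvious shift of the carry word). So you may simply divide the inequality through by $\beta^k$ and replace $n,m$ by the rationals $n/b^k,\,m/b^k\in\Z[\tfrac{1}{b}]$, landing directly on
\[
c(n)-c(n-i)\ \geq\ c(m)-c(m-j)\qquad\text{for } n,m\in\Z[\tfrac{1}{b}],\ d\le n<db,\ m\ge j,
\]
with no correction terms at all. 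Your anticipated ``principal difficulty'' evaporates.

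For the reduced inequality, your case analysis is on the right track but the $j\ge 1$ step is vague and the ``telescoping via the base property'' is not the efficient route. After applying \eqref{eq:carryr} twice and dropping $R(m,j)\ge 0$, one needs $c(i)-c(j)\ge R(n,i)$. For $i=d$ your argument that $R(n,d)=0$ is fine (and works for rational $n$). For $i<d$, apply \eqref{eq:carryr} \emph{once more} to the pair $(i,j)$ to obtain $c(i)-c(j)=c(i-j)-R(i,j)$, reducing to
\[
c(i-j)\ \ge\ R(n,i)+R(i,j).
\]
Now bound the right side by $D\reed_\beta(0.1^\infty)+D\reed_\beta(0.01^\infty)=\tfrac{D(\beta+1)}{\beta(\beta-1)}$ (using $n/d<b$ and $i/d<1$ to control where carries can occur), and bound the left side below by $c(D)$ via Lemma~\ref{lemma:base}\eqref{baseincreasing}; Proposition~\ref{contDlemma} or Lemma~\ref{easycontlemma} then closes each regime. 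This replaces your undefined telescoping with a single extra use of the carry identity.
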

Here as before $\mathcal I  = \{(i, j) : 0 \leq j < j + D \leq i \leq d\} \cup \{(d,d)\}$ is the set of pairs $(i, j)$ so that $y^j X^{d - i}$ can appear in the companion polynomial of the recursion in question; see proof of Proposition \ref{nilgrowthprop}.

\begin{proof}
We  use Lemma \ref{lemma:scalingcontent} to divide each equation by $\beta^k$, and replace $n$ and $m$ by $\frac{n}{b^k}$ and $\frac{m}{b^k}$, respectively. We therefore seek to show that for $n, m \in \Z[\frac{1}{b}]_{\geq 0}$ satisfying $d \leq n < db$ and $m \geq j$, we have 
$$c(n) - c(n - i) \stackrel{?}\geq c(m) - c(m- j).$$
Using equation \eqref{eq:carryr} twice, the desired statement is equivalent to 
$$c(i) - R(n, i) \stackrel{?} \geq c(j) - R(m, j).$$
Or, equivalently, 
$$c(i) - c(j) \stackrel{?}\geq R(n, i) - R(m, j).$$
Since $R(m, j)$ is nonnegative, it suffices to show that 
$$c(i) - c(j) \stackrel{?}\geq R(n, i).$$
We now take two cases. If $i = d$, then $R(n, i) = 0$, so that it suffices to show that $c(d) \geq c(j)$ for $(d, j) \in \mathcal I$. For $j = d$, this is clear; and for $j \leq i - D$, this follows from both parts of Lemma~\ref{lemma:base} above. 

If, on the other hand, $i < d$, then Equation \eqref{eq:carryr} gives us $c(i) - c(j) = c(i - j) - R(i,j)$, which reduces the desired statement to 
\begin{equation}\label{last}
c(i-j) \stackrel{?}\geq R(n, i) + R(i, j).
\end{equation}

If $d = b$ or $d = b - 1$, then $R(i, j) = 0$; and (by Lemma \ref{easycontlemma}) we have $c(i - j) = \frac{i-j}{d - D} \geq \frac{D}{d-D}$, so it remains to show that $R(n, i) \stackrel{?}\leq \frac{D}{d-D}$. If $d = b$, then at most one digit is carried, so that $R(n, i) \leq D \reed_\beta(0.1) = \frac{D}{\beta}$. And if $d = b-1$, then every digit may be carried, so that $R(n, i) \leq D \reed_\beta(0.1^\infty) = \frac{D}{\beta - 1}$. In both cases, the desired inequality holds.  

On the other hand if $d \leq b - 2$ (and so $D \leq \frac{b}{2}$), then we reason as follows. The left-hand side of desired inequality \eqref{last} is bounded below by $c(D)$, and the right-hand side is bounded above by $$D \reed_\beta(0.1^\infty) + D \reed_\beta (0.01^\infty) = \frac{D (\beta + 1)}{\beta(\beta - 1)}.$$ 
Therefore it suffices to show that $c(D) \stackrel?\geq \frac{D (\beta + 1)}{\beta(\beta - 1)},$
which is established in Proposition \ref{contDlemma}.\end{proof} 

Lemmas \ref{lemma:base} and \ref{lemma:step} complete the proof of Theorem \ref{witnessthm}, which in turn completes the proof of Theorem \ref{specialngtthm}, and hence of Theorem \ref{ngtthm}.

%%%%%%%%%%%%%%%%%%%%%%%%%%%%%%%%%%%%%%%%
%%%%%%%%%%%%%%%%%%%%%%%%%%%%%%%%%%%%%%%%
\section{Complements}\label{complementsec}
%%%%%%%%%%%%%%%%%%%%%%%%%%%%%%%%%%%%%%%%
%%%%%%%%%%%%%%%%%%%%%%%%%%%%%%%%%%%%%%%%

\subsection{Refinement of Theorem \ref{toyngtthm}}
We state a refinement of the toy version of the NGT (Theorem \ref{toyngtthm}). One can also obtain similar refinements of Theorem \ref{specialngtthm}.

\begin{thm}[Refined toy NGT]\label{nilpyd1}
Let $\F$ be a field of characteristic $p$ and let $q = p^k$. Suppose that $T: \F[y] \to \F[y]$ is an $\F$-linear operator satisfying the following two conditions:  
\begin{enumerate}
\item For $f \in \F[y]$, we have $\deg T(f) \leq \deg f - E$ for some $E \geq 1$. 
\item The sequence $\{T(y^n)\}_n$ satisfies a linear recursion whose companion polynomial has the shape 
$$P = (X + c y)^d + (\mbox{terms of total degree} \leq d - D) \in \F[y][X]$$
for some $d \leq q$, some $D \geq 1$, and some $c \in \F$.  
\end{enumerate}
Then $$N_T(f) \leq \frac{(q-D)(q - 1)}{E (q - D - 1)}(\deg f)^\frac{\log (q - D)}{\log q}.$$
\end{thm}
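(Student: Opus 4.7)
First, I would reduce to the empty-middle setting of Theorem~\ref{specialngtthm}. Since $q = p^k$ and $\F$ has characteristic $p$, Frobenius gives $(X + cy)^q = X^q + c^q y^q$; multiplying the given companion polynomial by $(X + cy)^{q - d}$ and invoking Fact~\ref{fact:compolyideal} shows that $\{T(y^n)\}_n$ also satisfies the recursion with companion polynomial
\[
P' = X^q + c^q y^q + (\mbox{terms of total degree} \leq q - D).
\]
Thus $T$ is a $(q, D)$-NRO in the sense of Section~\ref{ngtpf}, and Theorem~\ref{specialngtthm} is available with $b = q$ (for which $b - d' = 0 \leq 1$). The nilgrowth witness produced by Theorem~\ref{witnessthm} is $c_{q, q - D}^q$, but for the sharper constant I would work with the integer-valued rescaling $c := c_{q, q - D}$ on $\N$, which satisfies the same base property (Lemma~\ref{lemma:base}) and step property (Lemma~\ref{lemma:step}) up to the uniform factor $q - D$.

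Second, I would rerun the induction from the proof of Proposition~\ref{nilgrowthprop} with $c$ to establish the strengthened drop
\[
\cc(Tf) \leq \cc(f) - E \qquad \mbox{for every nonzero } f \in \F[y],
\]
where $\cc(\sum a_n y^n) := \max\{c(n) : a_n \neq 0\}$. Reducing to $f = y^n$, it suffices to show $c(m) \leq c(n) - E$ for every $y^m$ appearing in $T(y^n)$; I proceed by strong induction on $n$. The base case $n < q$ is immediate: then $c(n) = n$ and any such $m$ satisfies $m \leq n - E$ by hypothesis~(1), so $c(m) = m \leq n - E = c(n) - E$ (vacuously if $n < E$, since then $T(y^n) = 0$). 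For $n \geq q$, choose $k \geq 0$ with $q^{k+1} \leq n < q^{k+2}$ and apply Corollary~\ref{cor:ppowercharp} to $P'$: if $y^m$ appears in $T(y^n)$, then $y^{m - jq^k}$ appears in $T(y^{n - iq^k})$ for some $(i, j) \in \mathcal{I}$. The induction hypothesis gives $c(m - jq^k) \leq c(n - iq^k) - E$, and the step property gives $c(n) - c(n - iq^k) \geq c(m) - c(m - jq^k)$; adding these two inequalities yields $c(m) \leq c(n) - E$, so the additive gap propagates.

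Third, iterating $\cc(Tf) \leq \cc(f) - E$ yields $N_T(f) \leq \cc(f)/E \leq \max_{n \leq \deg f} c(n)/E$, and Lemma~\ref{contentgrowthlemma} with $b = q$ and $\beta = q - D$ bounds $c(n) < \frac{(q - D)(q - 1)}{q - D - 1}\, n^{\log_q(q - D)}$; this bound is monotone in $n$, so we conclude
\[
N_T(f) \leq \frac{(q - D)(q - 1)}{E\,(q - D - 1)}(\deg f)^{\log(q - D)/\log q},
\]
as claimed. The only nontrivial point in the argument --- beyond the empty-middle reduction --- is that an \emph{additive} $E$-gap (and not merely a strict drop) propagates through the induction; this works because the step property compares differences of $c$, hence is insensitive to additive shifts, and the base case supplies the $E$-gap directly from the refined degree-lowering hypothesis $\deg Tf \leq \deg f - E$.
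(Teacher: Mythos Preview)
Your proposal is correct and follows essentially the same approach as the paper: reduce to the empty-middle shape by multiplying $P$ by $(X+cy)^{q-d}$, run the content-function induction to get the strengthened drop $\cc(Tf)\leq \cc(f)-E$, and finish with Lemma~\ref{contentgrowthlemma}. The only cosmetic difference is that the paper points back to the bare-hands toy argument (Proposition~\ref{babycontent}, noting that item~\eqref{iDitem} now reads ``$c(n)-c(n-i)\in\{i,\,i-D\}$''), whereas you phrase the inductive step via the general step property of Lemma~\ref{lemma:step}; since $c_{q,q-D}^q$ and $c_{q,q-D}$ differ by the scalar $(q-D)^{-1}$, the two formulations are equivalent.
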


\begin{rk}
Given the shape of the companion polynomial of the recursion (i.e., the total degree of the companion polynomial is the same as the order of the recursion), it suffices to check the condition that $\deg T(f) \leq \deg f - E$ on $f = 1, y, \ldots, y^{d-1}$ only.
\end{rk} 

\begin{proof}
The sequence $\{T(y^n)\}_n$ also satisfies the linear recursion with companion polynomial 
$$P' = (X + cy)^{q-d} P = X^q + cy^{q} + (\mbox{terms of total degree}\leq q - D).$$
Let $c = c_{q, q-D}$, define $\cc: \F[y] \to \N \cup \{-\infty\}$ from $c$ as in the proof of Theorem \ref{toyngtthm}, and follow the same inductive argument \emph{mutatis mutandis} to show that $\cc(Tf) \leq \cc(f) - E$. (The main adjustment is in Proposition \ref{babycontent}\eqref{iDitem}: if $i$ is a digit base $q$ and $n \geq i$ has no more than $2$ digits base $q$, then $c(n) - c(n - i)$ is either $i$ or $i - D$; see  Lemma~\ref{lemma:carrydigits} for a conceptual explanation.) 

We have shown therefore shown that $N_T(y^n) \leq \frac{c(n)}{E}$. Lemma \ref{contentgrowthlemma} completes the proof.
\end{proof}

\subsection{Comments on $\alpha$ in Theorem \ref{specialngtthm}}
How optimal is the order of growth of the nilpotence index $\alpha$ from the empty-middle NGT? 

To this end, if $K$ is a field, and $T: K[y] \to K[y]$ a degree-lowering linear operator, let 
$$\alpha(T) := \limsup_{n \to \infty} \frac{\log N_T(y^n)}{\log n},$$
and let $$\alpha_K(d, D) = \sup_{T \in \mathcal L_k(d, D)} \{ \alpha(T) \},$$
where $\mathcal L_k(d, D)$ is the set of degree-lowering operators $T: K[y] \to K[y]$ with $\{T(y^n)\}_n$ satisfying a recurrence with companion polynomial $X^d + c y^d + (\mbox{terms of total degree} \leq d - D)$ for some $c \in K$. Since $N_T(y^n) \leq n$, we know that $\alpha_K(d, D) \leq 1$. The following proposition clarifies that studying $\alpha_K(d, D)$ is only interesting in characteristic $p$. 

\begin{prop}\label{prop:charzeroex}
If $K$ has characteristic zero and $D < d$, then $\alpha_K(d, D) = 1$. 
\end{prop}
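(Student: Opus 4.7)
The plan is to exhibit, for each pair $(d,D)$ with $1 \le D < d$, a single operator $T \in \mathcal{L}_K(d,D)$ whose nilpotence index on $y^n$ grows linearly in $n$; this forces $\alpha(T) = 1$ and hence $\alpha_K(d,D) = 1$. Define $T$ by the companion polynomial $P_T = X^d - y^d - y$ (so $a_d = y^d + y$ and $a_1 = \cdots = a_{d-1} = 0$), together with the initial values $T(y) = 1$ and $T(y^i) = 0$ for $i \in \{0, 2, 3, \dots, d-1\}$. The only terms of $P_T$ of maximal total degree $d$ are $X^d$ and $-y^d$, and the single extra term $-y$ has total degree $1 \le d - D$, so $P_T$ fits the empty-middle shape for every $D < d$. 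An easy induction on $n$ using the recursion $T(y^n) = (y^d + y)\,T(y^{n-d})$ shows $\deg T(y^n) \le n - 1$, so $T$ is degree-lowering and belongs to $\mathcal{L}_K(d, D)$.

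Set $u := y^d + y$. From the recursion together with the chosen initial data, one verifies by induction on $k$ that
\[
T(y^{dk+1}) \;=\; u^k \qquad \text{and} \qquad T(y^m) = 0 \text{ whenever } m \not\equiv 1 \pmod d.
\]
Hence $N_T(y^{dk+1}) = 1 + N_T(u^k)$, and it will suffice to prove that $N_T(u^k) = k$: this gives $\alpha(T) \ge \lim_{k \to \infty} \log(k+1)/\log(dk+1) = 1$, while $\alpha(T) \le 1$ is automatic from $N_T(y^n) \le n$.

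The key computation is an explicit formula for $T$ on powers of $u$. Expanding $u^k = \sum_{j=0}^{k}\binom{k}{j}\,y^{(d-1)j + k}$ and keeping only those exponents $(d-1)j + k$ congruent to $1$ modulo $d$ (equivalently $j \equiv k-1 \pmod d$, so $j = k - 1 - d\ell$ with $\ell = 0, 1, \dots, \lfloor (k-1)/d\rfloor$) yields
\[
T(u^k) \;=\; k\, u^{k-1} \;+\; \sum_{\ell=1}^{\lfloor (k-1)/d\rfloor} \binom{k}{k - 1 - d\ell}\, u^{k - 1 - (d-1)\ell}.
\]
In particular $T$ preserves the subalgebra $V := K[u] \subset K[y]$, and on $V$ the leading $u$-term of $T(u^k)$ is exactly $k u^{k-1}$ --- the same as $(d/du)(u^k)$. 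Because $\cchar K = 0$ makes $k$ invertible for $k \ge 1$, the restriction $T|_V$ strictly lowers $u$-degree by exactly one on every positive-$u$-degree polynomial, so a routine induction on $m$ shows $T^m(u^k) = \tfrac{k!}{(k-m)!}\, u^{k - m} + (\text{$u$-degree} < k - m)$ for $0 \le m \le k$. Taking $m = k$ gives $T^k(u^k) = k! \neq 0$ and $T^{k+1}(u^k) = 0$, so $N_T(u^k) = k$, as needed.

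The main obstacle is really just the displayed formula for $T(u^k)$: it is a combinatorial bookkeeping step combining the binomial expansion of $u^k$ with the mod-$d$ classification of which $T(y^m)$ survive, and one must verify that every term on the right-hand side other than $k u^{k-1}$ has strictly smaller $u$-degree (which follows from $d \ge 2$, forcing $(d-1)\ell \ge 1$ for $\ell \ge 1$). Once this formula is in hand the whole argument is driven by the characteristic-zero non-vanishing of $k!$, and there is nothing further to do.
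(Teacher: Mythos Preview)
Your proof is correct. You use the same companion polynomial $X^d - y^d - y$ as the paper, but a different set of initial values: you take $T(y)=1$ and $T(y^i)=0$ otherwise, whereas the paper takes $T(y^{d-1})=1$ and $T(y^i)=0$ otherwise. This small change leads to a genuinely different (and cleaner) endgame. In your setup the surviving residue class is $1 \pmod d$, and the explicit formula $T(u^k)=k\,u^{k-1}+(\text{lower $u$-degree})$ shows that $T$ restricts to $K[u]$ and behaves, to leading order, like $d/du$; the characteristic-zero non-vanishing of $k!$ then immediately gives $N_T(u^k)=k$ and hence $N_T(y^{dk+1})=k+1$. The paper instead tracks, for its choice of initial data, the exponent set $e\big(T^m(y^{kd+d-1})\big)$ as an arithmetic progression in $y$-degree and shows it shrinks at a fixed rate, yielding $N_T(y^{kd+d-1})=\lfloor k/(d-1)\rfloor +1$; there the role of characteristic zero is that positive-coefficient sums cannot cancel. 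Your argument is more algebraic and gives a faster-growing nilpotence index (slope $1/d$ rather than $1/\big(d(d-1)\big)$), though of course either suffices for $\alpha(T)=1$.
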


\begin{proof}
Fix $d$, and consider the recursion operator $T:K[y] \to K[y]$ defined by the companion polynomial $P = X^d - y^d - y$, corresponding to the recurrence $T(y^n) = (y^d + y) T(y^{n-d})$, and initial values $\{T(y^n)\}_{n = 0}^{d-1} = \{0, \ldots, 0, 1\}$. We will show that $N_T(y^{kd + d - 1}) = \lfloor\frac{k}{d-1}\rfloor + 1$, which will establish that $\alpha_T = 1$. 

Indeed, from the recurrence, we have  $T(y^n) = 0$ if $n \not \equiv -1 \cmod{d}$, and $T(y^{kd + d - 1}) =(y^{d} + y)^{k}$.  For $f = \sum a_n y^n \in K[y]$, write $e(f)$ for the set $\{n : a_n \neq 0\}$ of exponents appearing in $f$. From above, we see that 
$$e( T (y^{kd + d - 1})) = \{k, k + (d-1), k + 2(d-1), \ldots, kd\}.$$ 
More generally, we can show by induction that the set $S_{m, k} := e\big(T^m(y^{kd + d -1})\big)$ is an arithmetic progression of common difference $d-1$, greatest term $d\big(k - (m-1)(d-1)\big)$, and length $k - (m-1)(d-1) + 1$, so long as $k \geq (m-1)(d-1)$; otherwise the set is empty and $T^m(y^{kd + d -1}) = 0$. Indeed, from the explicit formulation of $T(y^n)$, we see that if $S_{m, k}$ is as claimed, then the greatest element of $S_{m + 1, k} = N - (d-1)$, where $N$ is the greatest element of $S_{m, k}$ congruent to $d-1$ modulo $d$. Since the maximum element of $S_{m, k}$ is congruent to $0$ modulo $d$, and every successive smaller element is $d-1$ less, we see that $N$ is the $d^{\rm th}$ greatest element of $S_{m, k}$. In other words, $$N = d\big(k - (m-1)(d-1)\big) - (d-1)^2,$$ so that the greatest element of $S_{m+1, k}$ is $N - (d-1) =  d\big(k - m(d-1)\big),$ as desired. 
Since the relevant coefficients are positive and we are in infinite characteristic, no cancelation of intermediate terms is possible. Finally, since $T^m(y^{kd + d - 1}) \neq 0$ if and only if $k \geq (m-1)(d-1)$, we have $N_T(y^{kd + d - 1}) = \lfloor\frac{k}{d-1}\rfloor + 1$, as claimed. 
\end{proof}
 
In characteristic $p$, let us confine our inquiry to the case where $d$ can be taken to be a power of $p$, as in Theorem \ref{nilpyd1} above. Theorem \ref{nilpyd1} tells us that $\alpha_{\F}(p^k, D) \leq \frac{\log(p^k - D)}{\log p^k}$. How optimal is this estimate? Computationally, it appears that for $k = 1$ this inequality is optimal. A few examples for $D = 1$:

\begin{example}

\begin{itemize}
\item $p = 3$: The recursion operator $T$ with companion polynomial $X^3 + yX - y^3$ and initial values $\{0, 1, y\}$ appears to achieve $N_T(y^n) = c_{3, 2}(n)$ infinitely often. 
\item $p = 5$: The recursion operator $T$ with companion polynomial $X^5 + 3yX^3 + y^2X^2 + 3y^3X + 4y^5$ and initial values $[0, 1, y, y^2, y^3]$ appears to achieve $N_T(y^n) = c_{5, 4}(n)$ for ``most'' $n$: every counterexample $n$ has $0$s in its base-$5$ expansion.
\item $p = 7$: The recursion operator $T$ with companion polynomial $$X^7 + 3 y^2 X^4 + 6 y^3 X^3 + 5 y^4 X^2 + 3 y^5 X + 6 y^7$$ appears to achieve $N_T(y^n) = c_{7, 6}(n)$ for most $n$. For $n < 1000$, there are only 36 counterexamples, and $c_{7, 6}(n) - N_T(y^n) \leq 3$ for each one.  
\item $p = 11$. The recursion operator $T$ with companion polynomial $$P_T = X^{11} + 6 y X^9 + 2 y^2 X^8 + 3 y^3 X^7 + 6 y^4 X^6 + 8 y^6 X^4 + y^8 X^2 + 9 y^9 X + 10 y^{11}$$ appears to achieve $N_T(y^n) = c_{11, 10}(n)$ for most $n$. For $n < 1000$, there are only 8 counterexamples, and $N_T(y^n) = c_{11, 10}(n) - 1$ for each one. 
 \end{itemize}
\end{example}

The inequality does not appear to be optimal already for $p^k$ with $k \geq 2$. Further study of this behavior awaits.

%%%%%%%%%%%%%%%%%%%%%%%%%%%%%%%%%%%%%%%%%%%%%%%
%%%%%%%%%%%%%%%%%%%%%%%%%%%%%%%%%%%%%%%%%%%%%%%
%%%%%%%%%%                          BIBLIOGRAPHY
%%%%%%%%%%%%%%%%%%%%%%%%%%%%%%%%%%%%%%%%%%%%%%%
%%%%%%%%%%%%%%%%%%%%%%%%%%%%%%%%%%%%%%%%%%%%%%%
\pagestyle{plain}
\bibliographystyle{acm}
{\bibliography{modformsmodp}

\begin{thebibliography}{10}

\bibitem{kamal}
{\sc {Al Hajj Shehadeh}, H., Jaafar, S., and Khuri-Makdisi, K.}
\newblock Generating functions for {H}ecke operators.
\newblock {\em International Journal of Number Theory 5}, 1 (2009), 125--140.
\newblock Available at \url{https://doi.org/10.1142/S1793042109001979}.

\bibitem{alloucheshallit}
{\sc Allouche, J.-P., and Shallit, J.}
\newblock The ring of {$k$}-regular sequences.
\newblock {\em Theoretical Computer Science 98\/} (1992), 163--197.
\newblock Available at \url{https://doi.org/10.1016/0304-3975(92)90001-V}.

\bibitem{Bpseudodef}
{\sc Bella{\"{\i}}che, J.}
\newblock Pseudodeformations.
\newblock {\em Mathematische Zeitschrift 270}, 3-4 (2012), 1163--1180.

\bibitem{BK}
{\sc Bella\"{i}che, J., and Khare, C.}
\newblock Level 1 {H}ecke algebras of modular forms modulo $p$.
\newblock {\em Compositio Mathematica 151}, 3 (2015), 397--415.
\newblock Available at \url{https://doi.org/10.1112/S0010437X1400774X}.

\bibitem{BuzzCale:slopes}
{\sc Buzzard, K., and Calegari, F.}
\newblock Slopes of overconvergent 2-adic modular forms.
\newblock {\em Composition Math. 141\/} (2005), 591--604.
\newblock Available at \url{https://doi.org/10.1112/S0010437X04001034}.

\bibitem{kconrad:recursion}
{\sc Conrad, K.}
\newblock Solving linear recursions over all fields.
\newblock Available at
  \url{http://www.math.uconn.edu/~kconrad/blurbs/linmultialg/linearrecursion.pdf}.

\bibitem{deo}
{\sc Deo, S.}
\newblock Level ${N}$ {H}ecke algebras of modular forms modulo $p$.
\newblock {\em Algebra and Number Theory 11}, 1 (2017), 1--38.
\newblock Available at \url{http://msp.org/ant/2017/11-1/p01.xhtml}.

\bibitem{emertonBBK}
{\sc Emerton, M.}
\newblock {$p$}-{A}dic families of modular forms (after {H}ida, {C}oleman, and
  {M}azur).
\newblock {\em Ast\'erisque}, 339 (2011), Exp. No. 1013, vii, 31--61.
\newblock S{\'e}minaire Bourbaki. Vol. 2009/2010. Expos{\'e}s 1012--1026.

\bibitem{mathilde}
{\sc Gerbelli-Gauthier, M.}
\newblock The order of nilpotence of {H}ecke operators mod 2: a new proof.
\newblock {\em Research in Number Theory 2}, 1 (2016).
\newblock Available at \url{https://doi.org/10.1007/s40993-016-0038-6}.

\bibitem{infinitefern}
{\sc Gouv{\^e}a, F.~Q., and Mazur, B.}
\newblock On the density of modular representations.
\newblock In {\em Computational perspectives on number theory ({C}hicago, {IL},
  1995)}, vol.~7 of {\em AMS/IP Stud. Adv. Math.} Amer. Math. Soc., Providence,
  RI, 1998, pp.~127--142.

\bibitem{JochStudy}
{\sc Jochnowitz, N.}
\newblock A study of the local components of the {H}ecke algebra mod {$l$}.
\newblock {\em Transactions of the American Mathematical Society 270}, 1
  (1982), 253--267.

\bibitem{medved:heckedim}
{\sc Medvedovsky, A.}
\newblock Lower bounds on dimensions of mod-$p$ {H}ecke algebras in the
  genus-zero case.
\newblock In preparation.

\bibitem{medved}
{\sc Medvedovsky, A.}
\newblock Lower bounds on dimensions of mod-$p$ {H}ecke algebras: The
  nilpotence method.
\newblock Ph.D. thesis, 2015. Available at
  \url{http://www.math.brown.edu/~medved/Mathwriting/DissertationMedvedovsky_Fall2015.pdf}.

\bibitem{NS1}
{\sc Nicolas, J.-L., and Serre, J.-P.}
\newblock Formes modulaires modulo 2 : l'ordre de nilpotence des op\'{e}rateurs
  de {H}ecke modulo 2.
\newblock {\em Comptes rendus math\'{e}matique. Acad\'{e}mie des Sciences.
  Paris 350\/} (2012).

\bibitem{NS2}
{\sc Nicolas, J.-L., and Serre, J.-P.}
\newblock Formes modulaires modulo 2 : structure de l'alg\`{e}bre de {H}ecke.
\newblock {\em Comptes rendus math\'{e}matique. Acad\'{e}mie des Sciences.
  Paris 350\/} (2012).

\bibitem{Smod3}
{\sc Serre, J.-P.}
\newblock {\em {\OE}uvres. {V}ol. {III}}.
\newblock Springer-Verlag, Berlin, 1986, p.~710.
\newblock Note 229.2.

\bibitem{SwDy}
{\sc Swinnerton-Dyer, H. P.~F.}
\newblock On {$l$}-adic representations and congruences for coefficients of
  modular forms.
\newblock In {\em Modular functions of one variable, {III} ({P}roc. {I}nternat.
  {S}ummer {S}chool, {U}niv. {A}ntwerp, 1972)}. Springer, Berlin, 1973,
  pp.~1--55. Lecture Notes in Math., Vol. 350.

\bibitem{Tmod2}
{\sc Tate, J.}
\newblock The non-existence of certain {G}alois extensions of {${\mathbb Q}$}
  unramified outside {$2$}.
\newblock In {\em Arithmetic geometry ({T}empe, {AZ}, 1993)}, vol.~174 of {\em
  Contemp. Math.} Amer. Math. Soc., Providence, RI, 1994, pp.~153--156.

\bibitem{sage}
{\sc {The Sage Developers}}.
\newblock {\em {S}ageMath, the {S}age {M}athematics {S}oftware {S}ystem
  ({V}ersion 7.3)}, 2016.
\newblock \url{http://www.sagemath.org}.

\end{thebibliography}
%%%%%%%%%%%%%%%%%%%%%%%%%%%%%%%%%%%%%%%%%%%%%%%
%%%%%%%%%%%%%%%%%%%%%%%%%%%%%%%%%%%%%%%%%%%%%%%

\end{document}